\newtheorem{thm}{Theorem}[section]
\newtheorem{cor}[thm]{Corollary}
\newtheorem{lem}[thm]{Lemma}
\newtheorem{prop}[thm]{Proposition}
\theoremstyle{definition}
\numberwithin{equation}{section}
\renewcommand{\Re}{\mathrm{Re}}
\renewcommand{\Im}{\mathrm{Im}}
\newcommand{\cA}{\mathcal{A}}
\newcommand{\C}{\mathbb{C}}
\renewcommand{\div}{\operatorname{div}}
\newcommand{\R}{\mathbb{R}}
\newcommand\scl{\mathrm{scl}}
\def\tilde{\widetilde}
\def \bfo {\begin {eqnarray*} }
\def \efo {\end {eqnarray*} }
\def \ba {\begin {eqnarray*} }
\def \ea {\end {eqnarray*} }
\def \beq {\begin {eqnarray}}
\def \eeq {\end {eqnarray}}
\def \supp {\hbox{supp }}
\def \p {\partial}
\newcommand{\cO}{\mathcal{O}}
\newcommand{\cF}{\mathcal{F}}
\def\tilde{\widetilde}
\def \bfo {\begin {eqnarray*} }
\def \efo {\end {eqnarray*} }
\def \ba {\begin {eqnarray*} }
\def \ea {\end {eqnarray*} }
\def \beq {\begin {eqnarray}}
\def \eeq {\end {eqnarray}}
\def \supp {\hbox{supp }}
\def \p {\partial}
\begin{document}
\title[Partial data inverse problem for biharmonic operator]{Stable determination of the first order perturbation of the biharmonic operator from partial data}

\author[Liu]{Boya Liu}
\address{B. Liu, Department of Mathematics\\
North Dakota State University\\ 
Fargo, ND 58102, USA}
\email{boya.liu@ndsu.edu}

\author[Selim]{Salem Selim}
\address{S. Selim, Department of Mathematics\\
University of California, Irvine\\ 
CA 92697, USA }
\email{selimsa@uci.edu}

\begin{abstract}
We consider an inverse boundary value problem for the biharmonic operator with the first order perturbation in a bounded domain of dimension three or higher. Assuming that the first and the zeroth order perturbations are known in a neighborhood of the boundary, we establish log-type stability estimates for these perturbations from a partial Dirichlet-to-Neumann map. Specifically, measurements are taken only on arbitrarily small open subsets of the boundary.
\end{abstract}

\maketitle

\section{Introduction and Statement of Results}
\label{sec:intro}

Let $\Omega \subseteq \R^n$, $n \geq 3$, be a bounded domain with smooth boundary $\p \Omega$. In this paper we study an inverse problem for the biharmonic operator with first order perturbation defined by
\begin{equation}
\label{eq:def_biharmonic}
\mathcal{L}_{A,q} (x, D):= \Delta^2+A(x) \cdot D +q(x),
\end{equation}
where $D = i^{-1} \nabla$, $A\in W^{1,\infty}(\Omega, \C^n)$, and $q\in L^\infty(\Omega, \C)$. The operator $\mathcal{L}_{A,q}$, equipped with the domain $\mathcal{D}(\mathcal{L}_{A,q}) = \{u \in H^4(\Omega): u|_{\partial \Omega} = (\Delta u)|_{\partial \Omega}= 0\}$,
is an unbounded closed operator on $L^2(\Omega)$ with purely discrete spectrum, see \cite[Chapter 11]{Grubb}. Here and in what follows the space $H^s(\Omega)=\{U|_\Omega: U\in H^s(\R^n)\}$, $s\in \R$, is the standard $L^2$-based Sobolev space on the domain $\Omega$.

The polyharmonic operator $(-\Delta)^m$, $m\ge 2$, arises in various practical scenarios. It is applied to model the equilibrium configuration of an elastic plate hinged along the boundary. In physics and geometry, polyharmonic operators appear in the study of the Kirchhoff plate equation in the theory of elasticity, the continuum mechanics of buckling problems, and the study of the Paneitz-Branson operator in conformal geometry, see \cite{Ashbaugh,Campos,Meleshko}. We refer readers to two books \cite{Gazzola_Grunau_Sweers,Selvadurai_book} for additional applications where the study of higher order operators is useful.

Consider the  boundary value problem with Navier boundary conditions
\begin{equation}
\label{eq:bvp}
\begin{cases}
\mathcal{L}_{A,q}u = 0 \quad \text{in} \quad \Omega, 
\\
u=f \quad \text{on} \quad \partial \Omega, 
\\
\Delta u=g \quad \text{on} \quad \partial \Omega,
\end{cases}
\end{equation}  
where $A \in W^{1, \infty}(\Omega, \C^n)$ and $q \in L^\infty(\Omega, \C)$. If 0 is not an eigenvalue of the operator $\mathcal{L}_{A,q}: \mathcal{D}(\mathcal{L}_{A,q}) \to L^2(\Omega)$, for any pair of functions $(f,g)\in H^{\frac{7}{2}}(\p \Omega)\times H^{\frac{3}{2}} (\p \Omega)$, the boundary value problem \eqref{eq:bvp} has a unique solution $u\in H^4(\Omega)$, see \cite{Krupchyk_Lassas_Uhlmann_poly}. 

In this paper we are concerned with a partial data inverse problem of recovering the vector field $A$ and the function $q$ from measurements made on arbitrarily small subsets of $\p \Omega$, under the assumption that $A$ and $q$ are  \textit{a priori} known in a neighborhood of $\p \Omega$. This assumption is realistic in most of the applications, since coefficients are already known or can be easily determined near the boundary, see for instance \cite{Ammari_Uhlmann,Bel_Chou}. The study of inverse problems with such assumptions was initiated in \cite{Ammari_Uhlmann} to establish uniqueness for the Schr\"odinger operator. For the perturbed biharmonic operator $\mathcal{L}_{A,q}$, uniqueness for both $A$ and $q$ was obtained in \cite[Theorem 1.3]{Yang}. We refer readers to \cite{Fathallah,Ben_Joud,Krupchyk_Uhlmann_stability,Zhao_Yuan} and the references therein to see several stability results for various elliptic operators with this partial data setting. 

Let us now describe the aforementioned partial data setting in mathematical terms.  Let $\omega_0\subset \Omega$ be an arbitrary neighborhood of $\p \Omega$. We assume that the vector field $A_0\in W^{1,\infty}(\Omega, \C^n)$ and the function $q_0\in L^\infty(\Omega, \C)$ are both known in $\omega_0$. To introduce boundary measurements, let $\nu$ be the unit outer normal to $\partial \Omega$, and let $\gamma_1, \gamma_2 \subseteq \p \Omega$ be arbitrary nonempty open sets. Associated with the boundary value problem \eqref{eq:bvp}, we define a partial  Dirichlet-to-Neumann map $\Lambda_{A,q}^{\gamma_1, \gamma_2}: H^{\frac{7}{2}}(\p \Omega)\times H^{\frac{3}{2}} (\p \Omega)\to H^{\frac{5}{2}}(\p \Omega)\times H^{\frac{1}{2}}(\p \Omega)$  by the formula
\begin{equation}
\label{eq:def_DN_map}
\begin{aligned}
\Lambda_{A,q}^{\gamma_1, \gamma_2}(f,g)=(\p_\nu u|_{\gamma_2}, \p_\nu (\Delta u)|_{\gamma_2}), \quad \supp f, \: \supp g \subseteq \gamma_1.
\end{aligned}
\end{equation}
In what follows, for any real numbers $\alpha$ and $\beta$, we shall denote $H^{\alpha, \beta}(\partial \Omega)$ the product of two Sobolev spaces $H^\alpha(\partial \Omega) \times H^\beta (\partial \Omega)$, equipped with the norm
\[
\|(f, g)\|_{H^{\alpha, \beta}(\partial \Omega)} = \|f\|_{H^\alpha(\partial \Omega)} + \|g\|_{H^\beta(\partial \Omega)}.
\]
We then define
\[
\|\Lambda_{A,q}^{\gamma_1, \gamma_2}\| := \sup \big\{\|\Lambda_{A,q}^{\gamma_1, \gamma_2}(f, g)\|_{H^{\frac{5}{2},\frac{1}{2}}(\gamma_2)}: \|(f, g)\|_{H^{\frac{7}{2}, \frac{3}{2}}(\gamma_1)} = 1\big\}.
\]
Throughout the remainder of this paper, we shall write $a\lesssim b$ to  indicate that $a\le Cb$ for some constant $C>0$  depending only on $\Omega$ and \textit{a priori} assumptions on the potentials.

For any constant $M>0$, we define the  admissible sets for the potentials $A$ and $q$ as follows:
\[
\cA(M,s, A_0,\omega_0)
=
\big\{A\in W^{1,\infty}(\Omega,\C^n):  \|A\|_{H^s(\Omega)}\le M, \: s>\frac{n}{2}+1, \text{ and } A(x)=A_0(x) \text{ in } \omega_0\big\}
\]
and
\[
\mathcal{Q}(M,q_0,\omega_0)
=
\left\{
q\in L^\infty(\overline{\Omega},\C): \|q\|_{L^\infty(\Omega)}\le M \text{ and } q(x)=q_0(x) \text{ in } \omega_0
\right\}.
\]
Similar to \cite{Liu_2024_biharmonic,Ma_Liu_stability}, we assume \textit{a priori} bounds for the $H^s$-norm of the vector field $A$ in the definition of $\mathcal{A}(M, s, A_0, \omega_0)$, where $s$ is sufficiently large and depends on the dimension of the domain $\Omega$. 

Our goal of this paper is to establish stability estimates for the vector field $A$ and the function $q$ from measurements made on arbitrary open subsets of the boundary. Specifically, we prove a log-type estimate for $A$, as well as a log-log-type estimate for $q$. Our main results in this paper complement the uniqueness results given in \cite[Theorem 1.3]{Yang}. First, the log-type stability estimate for the first order perturbation is as follows.
\begin{thm}
\label{thm:estimate_A}
Let $\Omega \subseteq \R^n, n \geq 3$, be a bounded domain with smooth connected boundary $\p \Omega$. Let $\omega_0 \subset \Omega$ be a known arbitrary neighborhood of $\partial \Omega$. Let $M \geq 0$, and let $A_j \in \cA(M,s, A_0,\omega_0)$, $q_j \in \mathcal{Q}(M,q_0,\omega_0)$, $j=1, 2$.  Suppose that 0 is not an eigenvalue for both operators $\mathcal{L}_{A_1,q_1}$ and  $\mathcal{L}_{A_2,q_2}$. Then there exist constants $\mu_1, \mu_2 \in (0,1)$ such that
\begin{equation}
\label{eq:estimate_A}
\|A_1 - A_2\|_{L^\infty(\Omega)} 
\lesssim 
\|\Lambda_{A_1, q_1}^{\gamma_1, \gamma_2}-\Lambda_{A_2, q_2}^{\gamma_1, \gamma_2}\|^{\mu_1}
+
\left|\log \|\Lambda_{A_1, q_1}^{\gamma_1, \gamma_2}-\Lambda_{A_2, q_2}^{\gamma_1, \gamma_2}\|\right|^{-\mu_2}.
\end{equation}
Here the constants $\mu_1$ and $\mu_2$ are given by  $\mu_1=\frac{\eta \tilde \eta}{6(1+s)^2}$ and $\mu_2 =\frac{\eta^2 \tilde \eta^2}{3(n+2)^2(1+s)^4}$, respectively, where $\eta = \frac{1}{2}(s-\frac{n}{2})$ and $\tilde \eta = \frac{1}{2}(s-(\frac{n}{2}+1))$. 
\end{thm}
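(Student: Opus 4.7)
The proof will follow the standard recipe for stability estimates in partial-data inverse boundary value problems, adapted to the biharmonic setting. The starting point is an Alessandrini-type integral identity: for $u_1$ solving $\mathcal{L}_{A_1,q_1} u_1 = 0$ and $u_2$ solving the formal adjoint equation $\mathcal{L}_{A_2,q_2}^{*}u_2 = 0$, a Green's formula for the biharmonic operator gives
\[
\int_\Omega \bigl[(A_1 - A_2)\cdot Du_1 + (q_1 - q_2) u_1\bigr]\,\overline{u_2}\,dx = \text{boundary terms},
\]
and the boundary terms should be bounded by $\|\Lambda_{A_1,q_1}^{\gamma_1,\gamma_2} - \Lambda_{A_2,q_2}^{\gamma_1,\gamma_2}\|$ once the unknown pieces of the Cauchy data on $\p \Omega \setminus \gamma_1$ and $\p \Omega \setminus \gamma_2$ are absorbed using the hypothesis that $A_j = A_0$, $q_j = q_0$ on $\omega_0$. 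The standard device is to extend $\Omega$ to a slightly larger domain $\widetilde \Omega$ on which the extended potentials still agree, and to combine this with a quantitative Runge approximation or boundary Carleman estimate (as in \cite{Krupchyk_Uhlmann_stability}); this replaces the partial DN norm by its full analog at the cost of a factor $e^{C/h}$.

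Next I would plug CGO solutions into the identity. Take
\[
u_j = e^{i\zeta_j \cdot x / h}(a_j + h r_j), \qquad \zeta_j \in \C^n, \quad \zeta_j \cdot \zeta_j = 0,
\]
where $\zeta_1 + \overline{\zeta_2} = -\xi + O(h|\xi|)$ so that $u_1\overline{u_2}$ carries the oscillation $e^{-i\xi \cdot x}$. The amplitudes $a_j$ solve a first order transport equation determined by $A_j$, and the remainder $r_j$ is constructed in a semiclassical Sobolev space by inverting the conjugated biharmonic operator via a Carleman estimate. After multiplication by $h$, the leading contribution of the identity becomes a constant multiple of $\zeta_1 \cdot \widehat{(A_1 - A_2)}(\xi)$; varying the imaginary part of $\zeta_1$ in a small ball recovers $\widehat{(A_1 - A_2)}(\xi)$ componentwise, while the $q$-term and the CGO error contribute lower-order pieces in $h$. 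Together with the partial-data reduction, this should yield a frequency estimate
\[
\bigl|\widehat{(A_1 - A_2)}(\xi)\bigr| \lesssim e^{C/h}\,\|\Lambda_{A_1,q_1}^{\gamma_1,\gamma_2} - \Lambda_{A_2,q_2}^{\gamma_1,\gamma_2}\| + h, \qquad |\xi| \le R,
\]
valid for all small $h$ and all $R>0$, with constants depending on $\Omega$ and $M$.

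The final step is to convert this frequency bound into the claimed $L^\infty$ estimate through a cascade of Sobolev interpolations. I would split Fourier space at radius $R$: the low-frequency piece is controlled by the display above, while the high-frequency tail is controlled by $\|A_1 - A_2\|_{H^s} \le 2M$ and contributes $R^{-2\eta}$ with $\eta = \frac{1}{2}(s - n/2)$. Optimizing first $h$ (balancing $e^{C/h}\|\Lambda\|$ against $h$) and then $R$ gives an $L^2$ bound of the form $\|A_1 - A_2\|_{L^2} \lesssim \|\Lambda\|^{\alpha_1} + |\log \|\Lambda\||^{-\alpha_2}$ for explicit $\alpha_1, \alpha_2 > 0$, and the Sobolev interpolation $\|u\|_{L^\infty} \lesssim \|u\|_{L^2}^{\theta}\|u\|_{H^s}^{1-\theta}$ with $\theta = \tilde \eta/(1+s)$ upgrades this to the desired $L^\infty$ estimate, yielding precisely the exponents $\mu_1 = \eta\tilde\eta/(6(1+s)^2)$ and $\mu_2 = \eta^2\tilde\eta^2/(3(n+2)^2(1+s)^4)$.

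The main obstacle will be the partial-data reduction: constructing a quantitative version of the boundary unique-continuation argument for the fourth-order operator $\mathcal{L}_{A,q}$ that respects the a priori agreement of the potentials on $\omega_0$, treats all four components of the Cauchy data simultaneously, and produces the explicit exponential dependence $e^{C/h}$ on the semiclassical parameter. A secondary difficulty is tracking the amplitudes $a_j$ in the CGO construction sharply enough in the $W^{1,\infty}$ norm of $A_j$ so that the leading-order extraction of $\widehat{(A_1-A_2)}(\xi)$ is not contaminated by the next-to-leading $q$-contribution, which ultimately forces the log-log rather than log rate for $q$.
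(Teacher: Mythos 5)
There is a genuine gap in the core extraction step. You assert that after plugging in CGO solutions and multiplying by $h$, the leading term is a constant multiple of $\zeta_1\cdot \widehat{(A_1-A_2)}(\xi)$, and that ``varying the imaginary part of $\zeta_1$ in a small ball recovers $\widehat{(A_1-A_2)}(\xi)$ componentwise.'' This does not work. The constraints $\zeta_1\cdot\zeta_1 = \zeta_2\cdot\zeta_2 = 0$, together with the requirement that $u_1\overline{u_2}$ carry the oscillation $e^{-i\xi\cdot x}$, force $\zeta_1$ and $\zeta_2$ to be, up to $O(h)$ corrections, of the form $\pm\mu^{(1)} \pm i\mu^{(2)}$ with $\mu^{(1)},\mu^{(2)}$ orthogonal to $\xi$. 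You therefore only obtain $\mu\cdot\widehat{(A_1-A_2)}(\xi)$ for $\mu\perp\xi$, i.e.\ the transverse components, which are exactly the Fourier coefficients of the two-form $\mathrm{d}(A_1-A_2)$. The longitudinal component $\xi\cdot\widehat{(A_1-A_2)}(\xi)$ is invisible to this scheme, and ``varying $\zeta_1$'' cannot reach it because the variations you can make are confined to the orthogonal complement of $\xi$. The frequency bound you state for $\widehat{(A_1-A_2)}$ is thus not a consequence of the argument you describe; what the argument actually yields is a bound on $\mathcal{F}(\mathrm{d}(A_1-A_2))(\xi)$.

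The paper closes this gap with a second, separate mechanism that your proposal omits entirely. One first controls $\|\mathrm{d}(A_1-A_2)\|_{L^\infty(\Omega)}$, then invokes the solenoidal decomposition $A_1-A_2 = A^{\mathrm{sol}} + \nabla\varphi$ with $\varphi|_{\partial\Omega}=0$; the solenoidal part obeys $\|A^{\mathrm{sol}}\|_{L^\infty}\lesssim\|\mathrm{d}(A_1-A_2)\|_{L^\infty}$, while the gradient part $\nabla\varphi$ is estimated by a second CGO substitution using a non-constant amplitude $a_2$ solving $(\mu^{(1)} - i\mu^{(2)})\cdot\nabla a_2 = 1$, followed by an integration by parts that converts the $\nabla\varphi$-term into $-\mathcal{F}(\varphi)(\xi)$. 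This is a substantial additional lemma (the estimate of $\|\nabla\varphi\|_{L^\infty}$), and it accounts for the appearance of \emph{both} $\eta$ and $\tilde\eta$ in the final exponents: the two factors arise from two independent Sobolev-interpolation steps (one for $\mathrm{d}A$ at regularity $s$ and one for $\nabla\varphi$ at regularity $s-1$). Your proposed single interpolation $\|u\|_{L^\infty}\lesssim\|u\|_{L^2}^{\theta}\|u\|_{H^s}^{1-\theta}$ would not produce exponents involving the product $\eta\tilde\eta$, so the claimed exponents do not actually follow from the route you sketched. Your description of the partial-data reduction (a Carleman-type unique continuation to control the cut-off error, applied to the difference $u$ rather than to a full-data replacement for the DN map) is in the right spirit, and the general CGO-plus-Fourier-splitting framework is correct; the missing ingredient is the two-stage recovery of $A$ from $\mathrm{d}A$ and $\nabla\varphi$.
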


We shall also establish the following log-log-type estimate for the zeroth order perturbation.

\begin{thm}
\label{thm:estimate_q}
Under the same hypotheses as in Theorem \ref{thm:estimate_A}, there exists a constant  $\mu' \in (0,1)$ such that
\begin{equation}
\label{eq:estimate_q}
\|q_1 - q_2\|_{H^{-1}(\Omega)} 
\lesssim 
\left|\log \left|\log\|\Lambda_{A_1, q_1}^{\gamma_1, \gamma_2}-\Lambda_{A_2, q_2}^{\gamma_1, \gamma_2}\|\right|\right|^{-\mu'}.
\end{equation}
Here $\mu' = \min \left\{ \frac{2}{n+2}, \frac{\mu_2}{2}\right\}$, where $\mu_2 \in (0,1)$ is the same as in Theorem \ref{thm:estimate_A}.
\end{thm}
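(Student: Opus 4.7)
The strategy is to estimate the Fourier transform $\widehat{(q_1-q_2)}(\xi)$ pointwise for $|\xi|$ bounded by a parameter $\rho$, and then recover the $H^{-1}$-norm from a standard high/low frequency decomposition. This mirrors the proof of Theorem~\ref{thm:estimate_A}, but now uses the $L^\infty$ bound for $A_1-A_2$ already established.

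I would feed the same CGO solutions
\begin{equation*}
u_j(x;\zeta_j) = e^{x\cdot\zeta_j}\bigl(a_j(x;\zeta_j) + r_j(x;\zeta_j)\bigr),\qquad j=1,2,
\end{equation*}
constructed earlier in the paper, with $\zeta_1+\zeta_2=-i\xi$, $\zeta_j\cdot\zeta_j=0$ and $|\zeta_j|\sim\tau$, into the Alessandrini-type integral identity involving $\Lambda_{A_1,q_1}^{\gamma_1,\gamma_2}-\Lambda_{A_2,q_2}^{\gamma_1,\gamma_2}$. Expanding in powers of $\tau$ and controlling the boundary contribution by the same boundary Carleman argument used for Theorem~\ref{thm:estimate_A} (the unknown coefficients are supported away from $\partial\Omega$, so the full-boundary data can be estimated by the partial data modulo an exponential factor), I expect a pointwise bound of the form
\begin{equation*}
\bigl|\widehat{(q_1-q_2)}(\xi)\bigr| \lesssim \tau\,\bigl|\widehat{(A_1-A_2)}(\xi)\bigr| + \tau^{-1} + e^{C\tau}\,\bigl\|\Lambda_{A_1,q_1}^{\gamma_1,\gamma_2}-\Lambda_{A_2,q_2}^{\gamma_1,\gamma_2}\bigr\|,
\end{equation*}
valid for $|\xi|\le c\tau$. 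The first term is where Theorem~\ref{thm:estimate_A} enters: via Sobolev interpolation between the $H^s$-bound and the $L^\infty$-bound from \eqref{eq:estimate_A}, this quantity can be absorbed into an expression that is controlled by the right hand side of \eqref{eq:estimate_A}.

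I would then split
\begin{equation*}
\|q_1-q_2\|_{H^{-1}(\Omega)}^2 \lesssim \int_{|\xi|\le\rho}\langle\xi\rangle^{-2}\bigl|\widehat{(q_1-q_2)}(\xi)\bigr|^2\,d\xi + \rho^{-2}\,\|q_1-q_2\|_{L^2(\Omega)}^2,
\end{equation*}
using the a priori bound $\|q_j\|_{L^\infty}\le M$ to control the high-frequency tail. Setting $\rho=c\tau$, inserting the pointwise bound, and optimizing in $\tau$ relative to $\epsilon:=\|\Lambda_{A_1,q_1}^{\gamma_1,\gamma_2}-\Lambda_{A_2,q_2}^{\gamma_1,\gamma_2}\|$ gives a first logarithm: one picks $\tau\sim c\,|\log\epsilon|$ so that the exponential factor $e^{C\tau}\epsilon$ is balanced against the algebraic terms. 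The exponent $\tfrac{2}{n+2}$ in $\mu'$ arises from precisely this optimization, since the low-frequency $H^{-1}$ volume factor scales like $\rho^{n-2}$ while the remainder contributes $\rho^{-2}$.

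The main obstacle is that inserting Theorem~\ref{thm:estimate_A} into the $\widehat{(A_1-A_2)}$ term introduces an additional logarithmic loss, which must then itself be re-optimized inside the estimate for $\|q_1-q_2\|_{H^{-1}}$. Carrying out this second optimization explains the factor $\mu_2/2$: because $\tau$ has already been fixed to be of order $|\log\epsilon|$, one effectively pays the square root of the exponent that appears in the stability of $A$. Taking the worse of the two exponents yields $\mu'=\min\{2/(n+2),\mu_2/2\}$, producing the log-log bound \eqref{eq:estimate_q}.
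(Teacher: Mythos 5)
The strategy you describe --- substitute CGO solutions into the partial-data integral identity, obtain a pointwise bound on $\widehat{q}$, split the $H^{-1}$ norm at frequency $\rho$, and optimize the parameters against $\epsilon := \|\Lambda_{A_1,q_1}^{\gamma_1,\gamma_2}-\Lambda_{A_2,q_2}^{\gamma_1,\gamma_2}\|$ --- is indeed the paper's strategy, and the CGO, Carleman, and unique-continuation ingredients you cite are all the right ones. But the specific pointwise bound you propose and the subsequent parameter choices are not consistent with the log-log rate in \eqref{eq:estimate_q}, and the description of where the two exponents come from does not track with the actual proof.

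The critical point is the nature of the $A$-dependent term. The paper bounds the term $\int_\Omega (A\cdot Du_2)\overline{u_1}\,dx$ crudely by $\|A\|_{L^\infty(\Omega)}\,\|Du_2\|_{L^2}\,\|u_1\|_{L^2} \lesssim e^{5R/\tau}\|A\|_{L^\infty(\Omega)}$, which after linking $\tau = \lambda h$ to the Carleman parameter produces an \emph{exponential} prefactor $e^{\alpha_5/h}$ on the $\|A\|_{L^\infty}$-term. It is precisely this exponential, combined with the logarithmic bound $\|A\|_{L^\infty(\Omega)} \lesssim |\log\epsilon|^{-\mu_2}$ from Theorem~\ref{thm:estimate_A}, that forces the final choice $h \sim |\log|\log\epsilon||^{-1}$ and hence the double logarithm: one needs $e^{\alpha_9/h} = |\log\epsilon|^{\mu_2/2}$ to half-cancel the $|\log\epsilon|^{-\mu_2}$ loss, and then the surviving $h^{2/(n+2)}$ becomes $|\log|\log\epsilon||^{-2/(n+2)}$. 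Your bound has only a polynomial factor $\tau\,|\widehat{(A_1-A_2)}(\xi)|$, with no exponential, so the mechanism that generates the second logarithm is absent from your account. (As an aside, if one actually could replace the exponential by a polynomial factor --- which the refined CGO computation does suggest, since the oscillatory phases of $\overline{u_1}u_2$ cancel --- one would be led to a \emph{single}-log estimate for $q$, which is stronger than Theorem~\ref{thm:estimate_q}; so either the bound you wrote is not obtainable with the partial-data integral identity as stated, or your conclusion should not match the theorem.)

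Related to this, the optimization you sketch is internally inconsistent. Choosing $\tau \sim c|\log\epsilon|$ (equivalently $h \sim |\log\epsilon|^{-1}$) does not work once the $A$-term is included: with the exponential prefactor present, $e^{\alpha/h}|\log\epsilon|^{-\mu_2} \sim \epsilon^{-c'}|\log\epsilon|^{-\mu_2}$ blows up, and even with your polynomial prefactor, $\tau\,|\log\epsilon|^{-\mu_2} \sim |\log\epsilon|^{1-\mu_2} \to \infty$ since $\mu_2<1$. The paper's actual choice is $h = \frac{2\alpha_9}{\mu_2}|\log|\log\epsilon||^{-1}$, not $h \sim |\log\epsilon|^{-1}$, and your ``second optimization with $\tau$ already fixed'' does not recover this. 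Similarly, the low-frequency volume factor the paper uses is $\rho^n$ (from $\int_{|\xi|\le\rho}\frac{d\xi}{1+|\xi|^2} \le \rho^n$), not $\rho^{n-2}$; balancing $\rho^n h^2 = \rho^{-2}$ gives $\rho = h^{-2/(n+2)}$, which is how $2/(n+2)$ enters, whereas balancing $\rho^{n-2}$ against $\rho^{-2}$ would produce $2/n$. Finally, $\rho$ should be a fractional power of the semiclassical parameter rather than $\rho = c\tau$; with $\rho = c\tau$ the low-frequency contribution scales like $\tau^{n-4}$ (or worse), which diverges for $n \ge 5$. In short: the skeleton is right, but you have not identified the exponential Carleman prefactor on the $A$-term as the source of the second logarithm, and the parameter choices you write down would not produce \eqref{eq:estimate_q}.
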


If the potentials $q_1$ and $q_2$ pose additional regularity properties and \textit{a priori} bounds, we have an immediate corollary of Theorem \ref{thm:estimate_q}.

\begin{cor}
\label{cor:estimate_q_Linfty}
Let $\Omega \subseteq \R^n, n \geq 3$, be a bounded domain with smooth connected boundary $\p \Omega$. Let $\omega_0 \subset \Omega$ be a known arbitrary neighborhood of $\partial \Omega$. Let $M \geq 0$, and let $A_j \in \cA(M,s, A_0,\omega_0)$. Assume that  $q_j \in L^\infty(\Omega)$ satisfies $\|q_j\|_{H^s(\Omega)}\le M$, $j=1, 2$, where $s>\frac{n}{2}$.  Suppose that 0 is not an eigenvalue for both operators $\mathcal{L}_{A_1,q_1}$ and  $\mathcal{L}_{A_2,q_2}$. Then there exists a  constant $\mu' \in (0,1)$ such that
\begin{equation}
\label{eq:estimate_q_Linfty}
\|q_1 - q_2\|_{L^\infty(\Omega)} 
\lesssim 
\left|\log \left| \log \|\Lambda_{A_1, q_1}^{\gamma_1, \gamma_2}-\Lambda_{A_2, q_2}^{\gamma_1, \gamma_2}\| \right|^{-\mu'}\right|^{\frac{s-\frac{n}{2}}{1+s}},
\end{equation}
where $\mu'$ is the same as in Theorem \ref{thm:estimate_q}.
\end{cor}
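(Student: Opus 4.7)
The plan is to upgrade the $H^{-1}$ estimate from Theorem~\ref{thm:estimate_q} to an $L^\infty$ estimate by interpolating against the additional $H^s$ regularity now being assumed on the $q_j$. The whole argument is a short post-processing of the previous theorem, with no new inverse-problem ingredients.

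First I would extend both $q_1, q_2$ to compactly supported functions in $\R^n$ (or work inside $\Omega$ with a suitable extension preserving the relevant Sobolev norms). Since $s > n/2$, Sobolev embedding gives $H^{s'}(\Omega) \hookrightarrow L^\infty(\Omega)$ for any $s' > n/2$, so it suffices to bound $\|q_1 - q_2\|_{H^{s'}(\Omega)}$ for some such $s'$. Next I would invoke the standard Sobolev interpolation inequality
\[
\|u\|_{H^{s'}(\Omega)} \lesssim \|u\|_{H^{-1}(\Omega)}^{\theta} \, \|u\|_{H^{s}(\Omega)}^{1-\theta}, \qquad s' = -\theta + (1-\theta) s,
\]
applied to $u = q_1 - q_2$, which yields $\theta = (s-s')/(s+1)$ and $1-\theta = (s'+1)/(s+1)$.

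Using the a priori bounds $\|q_j\|_{H^s(\Omega)} \le M$, the factor $\|q_1-q_2\|_{H^s(\Omega)}^{1-\theta}$ is controlled by $(2M)^{1-\theta}$, so after combining with the Sobolev embedding one obtains
\[
\|q_1 - q_2\|_{L^\infty(\Omega)} \lesssim \|q_1 - q_2\|_{H^{-1}(\Omega)}^{(s-s')/(s+1)}.
\]
Pushing $s'$ down to $n/2$ (strictly, $s' = n/2 + \varepsilon$ for an arbitrarily small $\varepsilon > 0$, absorbed into the implicit constant) gives exponent $(s - n/2)/(s+1)$, matching the exponent stated in \eqref{eq:estimate_q_Linfty}. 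Substituting the log-log bound from Theorem~\ref{thm:estimate_q} for $\|q_1 - q_2\|_{H^{-1}(\Omega)}$ then produces the right-hand side of \eqref{eq:estimate_q_Linfty}.

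There is no real obstacle here: the only point that needs minor care is the strict inequality $s' > n/2$ required for the Sobolev embedding into $L^\infty$. One handles this either by taking $s' = n/2 + \varepsilon$ and noting that the loss in the exponent can be made arbitrarily small while the constant depends on $\varepsilon$ and $M$, or by using a sharper embedding (e.g.\ via Besov or Bessel potential spaces) that achieves the $L^\infty$ bound at the critical index under the $H^s$ a priori control. In either case the conclusion follows directly once Theorem~\ref{thm:estimate_q} is in hand.
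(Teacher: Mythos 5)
Your approach coincides with the paper's, which disposes of this corollary in a single sentence at the end of Section~\ref{sec:proof_q_no_A}: apply the Sobolev embedding and interpolation exactly as in the computation \eqref{eq:est_dA_Linfty}, using the a priori bound $\|q_j\|_{H^s(\Omega)} \le M$ and the log-log estimate of Theorem~\ref{thm:estimate_q}. The one step that merits caution is your handling of the critical index: $H^{n/2}(\R^n)$ does \emph{not} embed into $L^\infty(\R^n)$, so one must fix some $s' > n/2$ strictly, and the resulting interpolation exponent $\frac{s - s'}{1+s}$ is then strictly smaller than $\frac{s - n/2}{1+s}$; the suggestion to ``absorb the $\varepsilon$-loss into the constant'' does not recover the endpoint exponent, because the implicit constant blows up as $\varepsilon \to 0$. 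The paper's own parallel computation in \eqref{eq:est_dA_Linfty} fixes $s' = \frac{n}{2} + \eta$ with $\eta = \frac{1}{2}\left(s - \frac{n}{2}\right)$, which produces the exponent $\frac{\eta}{1+s} = \frac{s - n/2}{2(1+s)}$ — exactly half of what Corollary~\ref{cor:estimate_q_Linfty} displays. This factor-of-two discrepancy appears to originate in the paper's statement of the corollary rather than in your argument; in any case, the method is identical and the log-log character of the final estimate is unaffected by the precise value of the exponent.
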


Moreover, in the absence of the first order perturbation, the stability of the zeroth order term improves from log-log-type to log-type. 

\begin{thm}
\label{thm:estimate_q_no_A}
Let $\Omega \subseteq \R^n, n \geq 3$, be a bounded domain with smooth connected boundary $\p \Omega$. Let $\omega_0 \subset \Omega$ be a known arbitrary neighborhood of $\partial \Omega$. Let $M \geq 0$, and let $q_j \in \mathcal{Q}(M,q_0,\omega_0)$, $j=1, 2$.  Suppose that 0 is not an eigenvalue for both operators $\mathcal{L}_{0,q_1}$ and  $\mathcal{L}_{0,q_2}$. Then we have the estimate
\begin{equation}
\label{eq:estimate_q_no_A}
\|q_1 - q_2\|_{H^{-1}(\Omega)} 
\lesssim
\|\Lambda_{0, q_1}^{\gamma_1, \gamma_2}-\Lambda_{0, q_2}^{\gamma_1, \gamma_2}\|^\frac{2}{3}
+
\left| \log\|\Lambda_{0, q_1}^{\gamma_1, \gamma_2}-\Lambda_{0, q_2}^{\gamma_1, \gamma_2}\| \right|^\frac{-2}{n+2}.  
\end{equation}
\end{thm}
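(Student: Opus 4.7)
The approach parallels the proof of \thmref{thm:estimate_q}, but since the first order perturbation is now zero we can target $q_1-q_2$ directly via complex geometric optics (CGO) solutions of the unperturbed biharmonic equation $(\Delta^2+q_j)u_j=0$, without any intermediate step of first stabilizing a vector field. This eliminates the compounding of two logarithms and leaves a single logarithmic loss, together with the stated $\epsilon^{2/3}$ polynomial contribution coming from the Fourier balancing.

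First I would construct, for each $\xi\in\R^n$ and each large parameter $\tau\ge|\xi|$, CGO solutions
\[
u_j(x)=e^{x\cdot\zeta_j}\bigl(1+r_j(x;\tau)\bigr),\qquad (\Delta^2+q_j)u_j=0 \text{ in } \Omega,
\]
with $\zeta_j\in\C^n$ satisfying $\zeta_j\cdot\zeta_j=0$, $|\zeta_j|\sim\tau$, and $\zeta_1+\zeta_2=-i\xi$. The remainders obey the sharp bound $\|r_j\|_{L^2(\Omega)}=O(\tau^{-2})$ coming from the $\Delta^2$ Carleman estimate with linear weight (as in \cite{Krupchyk_Lassas_Uhlmann_poly}; no terms involving $A$ enter). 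Substituting $u_1,u_2$ into the standard Alessandrini-type integral identity yields
\[
\int_\Omega(q_1-q_2)\,u_1u_2\,dx=\bra(\Lambda_{0,q_1}^{\gamma_1,\gamma_2}-\Lambda_{0,q_2}^{\gamma_1,\gamma_2})(u_1|_{\p\Omega},\Delta u_1|_{\p\Omega}),(\cdot)\cet_{\gamma_2}+E_{\p\Omega\setminus\gamma_j}(\tau),
\]
where $E_{\p\Omega\setminus\gamma_j}(\tau)$ collects the contributions from the unobserved part of $\p\Omega$.

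Second I would control $E_{\p\Omega\setminus\gamma_j}(\tau)$ using the boundary Carleman estimate for the biharmonic operator already built for \thmref{thm:estimate_A} (in the spirit of Kenig-Sj\"ostrand-Uhlmann, adapted to the fourth order case). The hypothesis $q_1=q_2$ on $\omega_0$ ensures that $\supp(q_1-q_2)$ lies strictly inside $\Omega$, permitting the choice of a Carleman weight that decays exponentially on $\p\Omega\setminus\gamma_2$ while growing only polynomially on $\gamma_2$. Combined with the previous step, and using $u_1u_2=e^{-ix\cdot\xi}(1+O(\tau^{-2}))$, this produces the Fourier estimate
\[
|\widehat{q_1-q_2}(\xi)|\lesssim e^{C\tau}\|\Lambda_{0,q_1}^{\gamma_1,\gamma_2}-\Lambda_{0,q_2}^{\gamma_1,\gamma_2}\|+\tau^{-1},\qquad|\xi|\le \tau/C.
\]

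Finally, write $\epsilon:=\|\Lambda_{0,q_1}^{\gamma_1,\gamma_2}-\Lambda_{0,q_2}^{\gamma_1,\gamma_2}\|$ and split the Fourier integral at radius $R\le\tau/C$:
\[
\|q_1-q_2\|_{H^{-1}(\Omega)}^2\lesssim R^n\bigl(e^{C\tau}\epsilon+\tau^{-1}\bigr)^2+R^{-2}\|q_1-q_2\|_{L^2(\Omega)}^2,
\]
with the high-frequency tail absorbed by the a priori bound $\|q_j\|_{L^\infty(\Omega)}\le M$. Balancing $R^{n+2}\sim\tau^2$, i.e.\ $R\sim\tau^{2/(n+2)}$, and then choosing $\tau=\frac{1}{3C}|\log\epsilon|$ turns $e^{C\tau}\epsilon$ into $\epsilon^{2/3}$, yielding
\[
\|q_1-q_2\|_{H^{-1}(\Omega)}\lesssim\epsilon^{2/3}+|\log\epsilon|^{-2/(n+2)},
\]
which is the announced bound. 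The main obstacle is Step 2: one must arrange the boundary Carleman estimate so that the unobserved boundary contribution decays exponentially in $\tau$ without the observed part blowing up faster than polynomially, since any weaker estimate there would destroy the clean single-log rate. Once that ingredient is in hand from the earlier partial data analysis of the paper, the rest is a routine low/high-frequency optimization.
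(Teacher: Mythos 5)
Your overall contour — CGO solutions, a Fourier bound for $\widehat{q_1-q_2}$, the low/high frequency split, and the logarithmic choice of parameter — matches the paper's Section~\ref{sec:proof_q_no_A}. But your Step~2 mischaracterizes the pivotal ingredient, and as stated it would not deliver the result for \emph{arbitrary} small $\gamma_1,\gamma_2$.

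The paper does not bound an unobserved-boundary error $E_{\partial\Omega\setminus\gamma_j}(\tau)$ directly via a Kenig--Sj\"ostrand--Uhlmann--type Carleman estimate. KSU arguments require the accessible portion of the boundary to occupy a specific geometric position relative to the CGO direction (front/back faces), and so they cannot handle arbitrary small open $\gamma_1,\gamma_2$. The partial-data mechanism here is entirely different: because $q_1=q_2$ in $\omega_0$, the difference $u:=v-u_2$ of the two boundary-value-problem solutions satisfies \eqref{eq:difference} with homogeneous Dirichlet/Navier data and a right-hand side that vanishes near $\partial\Omega$. Cutting off with $\chi$, applying Green's identity \eqref{eq:Green_id} to $\tilde u=\chi u$ and $u_1$ produces the \emph{interior} annular term $\int \overline{u_1}P(x,D)u$ supported in $\omega_2\setminus\omega_3$. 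That term is then estimated by interpolating $\|u\|_{H^3(\omega_2\setminus\omega_3)}$ between $H^1$ and $H^4$ and applying the local quantitative unique continuation of Lemma~\ref{lem:unique_continuation} (which is where the Fursikov--Imanuvilov weight and the Carleman estimate of Proposition~\ref{prop:Carle_est_biharmonic} actually enter). So the Carleman machinery is used to propagate smallness from the measured traces on $\Gamma_0=\gamma_2$ into the interior annulus, not to suppress boundary contributions from $\partial\Omega\setminus\gamma_2$.

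Two related technical discrepancies: the interpolation step produces $\|\Lambda_{0,q_1}^{\gamma_1,\gamma_2}-\Lambda_{0,q_2}^{\gamma_1,\gamma_2}\|^{1/3}$ in the Fourier bound (see \eqref{eq:integral_estimate} and \eqref{eq:integral_est_elec_potential_4_A=0}), not the first power as in your sketch; and the remainder estimate from Proposition~\ref{prop:CGO_solutions} is $\|r_j\|_{H^1_{\scl}}=\cO(h)$, which is all that is used, rather than the sharper $\cO(h^2)$ you assert. The final low/high frequency balancing and the logarithmic choice of $h$ in terms of $\|\Lambda\|$ are then as you describe and agree with the paper's \eqref{eq:est_q_in_H-1_part_3_A=0}. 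To repair the proposal, replace your Step~2 by: form $u=v-u_2$, cut off near $\partial\Omega$, derive the interior integral identity \eqref{eq:int_identity}, and invoke Lemma~\ref{lem:unique_continuation} plus interpolation to arrive at the integral estimate \eqref{eq:integral_est_elec_potential_1_A=0} with $A=0$.
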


\begin{cor}
\label{cor:estimate_q_Linfty_no_A}
Let $\Omega \subseteq \R^n, n \geq 3$, be a bounded domain with smooth connected boundary $\p \Omega$. Let $\omega_0 \subset \Omega$ be a given arbitrary neighborhood of $\partial \Omega$. For any constant $M\ge 0$, assume that  $q_j \in L^\infty(\Omega)$ satisfies $\|q_j\|_{H^s(\Omega)}\le M$, $j=1, 2$, $s>\frac{n}{2}$.  Suppose that 0 is not an eigenvalue for both operators $\mathcal{L}_{0,q_1}$ and  $\mathcal{L}_{0,q_2}$. Then the following estimate holds:
\begin{equation}
\label{eq:estimate_q_Linfty_no_A}
\|q_1 - q_2\|_{L^\infty(\Omega)} 
\lesssim 
\left( 
\|\Lambda_{0, q_1}^{\gamma_1, \gamma_2}-\Lambda_{0, q_2}^{\gamma_1, \gamma_2}\|^\frac{2}{3}
+
\left| \log\|\Lambda_{0, q_1}^{\gamma_1, \gamma_2}-\Lambda_{0, q_2}^{\gamma_1, \gamma_2}\| \right|^\frac{-2}{n+2} \right)^{\frac{s-\frac{n}{2}}{1+s}}.
\end{equation}
\end{cor}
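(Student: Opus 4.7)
The plan is to deduce Corollary~\ref{cor:estimate_q_Linfty_no_A} from Theorem~\ref{thm:estimate_q_no_A} by a standard Sobolev interpolation bootstrap, using the additional a priori $H^{s}$-regularity of the $q_{j}$'s to upgrade the $H^{-1}$-bound on $q_{1}-q_{2}$ to an $L^{\infty}$-bound. No new analysis of the Dirichlet-to-Neumann map is needed.

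The key analytic input is the Gagliardo--Nirenberg/Sobolev-type inequality
\[
\|u\|_{L^{\infty}(\Omega)} \lesssim \|u\|_{H^{-1}(\Omega)}^{\theta}\,\|u\|_{H^{s}(\Omega)}^{1-\theta}, \qquad \theta = \frac{s-\frac{n}{2}}{s+1},
\]
valid for $s > \frac{n}{2}$ on the bounded smooth domain $\Omega$. I would obtain it by composing two standard facts: the Sobolev embedding $H^{s_{0}}(\Omega) \hookrightarrow L^{\infty}(\Omega)$ for $s_{0} > \frac{n}{2}$, and the interpolation estimate
\[
\|u\|_{H^{s_{0}}(\Omega)} \lesssim \|u\|_{H^{-1}(\Omega)}^{(s-s_{0})/(s+1)}\,\|u\|_{H^{s}(\Omega)}^{(s_{0}+1)/(s+1)},
\]
coming from the identification $[H^{-1}(\Omega), H^{s}(\Omega)]_{(s_{0}+1)/(s+1)} = H^{s_{0}}(\Omega)$. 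Taking $s_{0} = \frac{n}{2} + \epsilon$ and sending $\epsilon \downarrow 0$ (or, alternatively, invoking the corresponding sharp Besov endpoint) yields the claimed exponent $\theta$.

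Applying this inequality with $u = q_{1} - q_{2}$, the $H^{s}$-factor is bounded by $(2M)^{1-\theta}$ in view of $\|q_{j}\|_{H^{s}(\Omega)} \le M$, hence is absorbed into the implicit constant. Substituting the bound \eqref{eq:estimate_q_no_A} for $\|q_{1}-q_{2}\|_{H^{-1}(\Omega)}$ and raising to the power $\theta$ then produces \eqref{eq:estimate_q_Linfty_no_A} directly.

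The only mildly delicate point is the borderline $s_{0} = \frac{n}{2}$ issue in the Sobolev embedding; this is handled routinely by taking $s_{0}$ slightly above $\frac{n}{2}$, which costs only an $\epsilon$-loss in the exponent that can be sent to zero since $s > \frac{n}{2}$ is strict. There is no deeper obstacle: once Theorem~\ref{thm:estimate_q_no_A} is in hand, Corollary~\ref{cor:estimate_q_Linfty_no_A} is a purely interpolation-theoretic consequence, and the proof is essentially mechanical.
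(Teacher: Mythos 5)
Your approach is the same as the paper's: the paper disposes of this corollary in a single sentence at the end of Section~\ref{sec:proof_q_no_A}, saying to repeat the computation in \eqref{eq:est_dA_Linfty}, i.e.\ Sobolev embedding $H^{n/2+\eta}\hookrightarrow L^\infty$ followed by interpolation between $H^{-1}$ and $H^s$, with the $H^s$-factor absorbed into the constant via the a priori bound $\|q_j\|_{H^s}\le M$. So your plan is the intended one.

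One point deserves a caveat, though. You claim that ``sending $\epsilon\downarrow 0$'' recovers the exponent $\theta=\frac{s-n/2}{1+s}$. This does not quite work: the constant in the embedding $H^{n/2+\epsilon}(\Omega)\hookrightarrow L^\infty(\Omega)$ blows up as $\epsilon\to 0$, so for any \emph{fixed} admissible constant you only obtain the exponent $\frac{s-n/2-\epsilon}{1+s}$, strictly smaller than the one in \eqref{eq:estimate_q_Linfty_no_A}. Your parenthetical mention of the Besov endpoint ($B^{n/2}_{2,1}\hookrightarrow L^\infty$ combined with real interpolation $(H^{-1},H^s)_{\theta,1}=B^{n/2}_{2,1}$ at $\theta=\frac{n/2+1}{s+1}$) is the rigorous way to hit the exact exponent, but it is a genuinely different tool than the $L^2$-based interpolation inequality the paper cites. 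In fact, running the paper's own template verbatim --- placing the intermediate exponent at the midpoint $\frac{n}{2}+\eta$ with $\eta=\frac{1}{2}(s-\frac{n}{2})$, exactly as in \eqref{eq:est_dA_Linfty} --- yields the exponent $\frac{\eta}{1+s}=\frac{s-n/2}{2(1+s)}$, which is half the exponent stated in the corollary. So either the paper intends the Besov refinement implicitly (in which case your alternative is the right reading), or the stated exponent in \eqref{eq:estimate_q_Linfty_no_A} is a minor overstatement and should read $\frac{s-n/2}{2(1+s)}$. Either way, the qualitative conclusion (a logarithmic rate for $\|q_1-q_2\|_{L^\infty}$) is unaffected, but you should not assert that the naive $\epsilon\to 0$ limit produces the sharp exponent with a uniform constant.
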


We next review some previous literature related to the inverse problems considered in this paper. The study of inverse boundary value problems to recover coefficients appearing in the polyharmonic operator has received significant attention in recent years. Assuming that the first order term $A=0$ in \eqref{eq:def_biharmonic}, uniqueness results of the potential $q$ were established in \cite{Ikehata,Isakov_91}. For the first order perturbation of the polyharmonic operator, uniqueness results from full boundary measurements were obtained in \cite{Krupchyk_Lassas_Uhlmann_poly} in dimension three or higher, and in  \cite{Bansal_Krishnan_Pattar} in dimension two. There are extensive subsequent efforts to uniquely recover the first order perturbation of the polyharmonic operator with lower regularity, see   \cite{Assylbekov_16,Assylbekov_Iyer,Brown_Gauthier,Krupchyk_Uhlmann_poly_16} and the references therein. Furthermore, as the polyharmonic operator is of order $2m$, it is natural to also investigate inverse boundary value problems for the second or higher order perturbations of the polyharmonic operator, see for instance \cite{Bhattacharyya_Ghosh_19,Bhattacharyya_Krishnan_Sahoo_23,Ghosh_Krishnan}.

In many practical scenarios, one can only make measurements  on parts of the boundary of a medium, as performing measurements over the entire boundary can be extremely difficult. Therefore, the study of partial data inverse problems is both significant and of great interest. It was proved in \cite{Krupchyk_Lassas_Uhlmann_bi_partial} that a partial Dirichlet-to-Neumann map, where the Dirichlet data is measured on the entire boundary and the Neumann data is measured only on approximately half of the boundary, uniquely determines the first order perturbation of the biharmonic operator. We point out that the uniqueness for the first order perturbation of the polyharmonic operator, where the Dirichlet data and the Neumann data are both measured on open subsets of the boundary, remains an important and challenging open problem. An analogous result for the magnetic Schr\"odinger operator was established in \cite{Chung_142}. In addition, several partial data uniqueness results for the first order perturbation of the biharmonic operator were obtained in \cite{Yang}, in both bounded and unbounded domains. 

Let us recall that, due to a natural gauge invariance of boundary measurements, there is an obstruction to uniqueness in the recovery of the first order perturbation of the Laplacian, which was discussed in \cite{Knudsen_Salo,Krup_Lass_Uhl_magSchr_euc,Liu_2018,Sun_95} and the references therein among the vast amount of related literature. However, as first noted in \cite{Krupchyk_Lassas_Uhlmann_poly}, this phenomenon does not occur for the first order perturbation of the biharmonic operator. Indeed, for the biharmonic operator, a gauge invariance arises in the unique recovery of linear third order perturbations, see \cite {Bhattacharyya_Ghosh_second_order} for details. This remains an open problem, and we refer readers to \cite {Bhattacharyya_Krupchyk_Sahoo_Uhlmann} for a discussion of its difficulty. Meanwhile, it was discovered in the seminal work \cite{Kurylev_Lassas_Uhlmann} that nonlinearity can be helpful to solve inverse problems for nonlinear PDEs,  even in the event that the corresponding problems for linear equations are yet solved. A very recent result \cite{Bhattacharyya_Krupchyk_Sahoo_Uhlmann} shows that the Dirichlet-to-Neumann map uniquely determines a nonlinear third order perturbation of the biharmonic operator, notably without gauge invariance. 

Another important aspect of inverse boundary value problems is the issue of stability. For the biharmonic operator, most of the known results concern only the zeroth order perturbation. The authors of \cite{Choudhury_Krishnan} established a log-type stability estimate from full data. For partial data results, a log-type estimate was proved in \cite{Choudhury_Heck} when the inaccessible part of the boundary is flat, and measurements are performed on its complement. Additionally, a log-log-type estimate was established in \cite{Choudhury_Krishnan} when the Neumann data is measured on slightly more than half of the boundary. 

Turning our attention to the first order perturbation,  we refer readers to  \cite{Ma_Liu_stability} for log-type estimates of the first and the zeroth order perturbations of the biharmonic operator from full data. To the best of our knowledge, the only result addressing the stability of the first order perturbation of the biharmonic operator with partial data is established by the first author in \cite{Liu_2024_biharmonic}, where the inaccessible portion of the boundary is flat, and measurements are made on its complement.

Let us now proceed to describe the main ideas to establish the main results of this paper. The general strategy follows from the methods introduced in \cite{Alessandrini} using complex geometric optics (CGO) solutions. We shall construct a CGO solution $u_1\in H^4(\Omega)$ to the equation $\mathcal{L}_{A_1,q_1}^\ast u_1=0$ in $\Omega$, where $\mathcal{L}_{A,q}^\ast =\mathcal{L}_{\overline{A},q-i\div \overline{A}}$ is the formal $L^2$-adjoint of $\mathcal{L}_{A_1,q_1}$, as well as a CGO solution $u_2\in H^4(\Omega)$ satisfying the equation $\mathcal{L}_{A_2,q_2} u_2=0$ in $\Omega$.  The existence of CGO solutions was established in for instance \cite[Proposition 2.4]{Krupchyk_Lassas_Uhlmann_poly}, see also \cite[Proposition 2.3]{Yang}. 

The proof of the main results also comprises several other key components. First, let us note that the Neumann data was only measured on the subset $\gamma_2$, hence it is essential to obtain the corresponding $H^s$-norms only over $\gamma_2$, rather than over the entire boundary. To achieve this, a crucial step in the proof is a local quantitative unique continuation for solutions of the nonhomogeneous boundary value problem \eqref{eq:nonhomogeneous_bvp} with homogeneous boundary conditions, see Lemma \ref{lem:unique_continuation}. To the best of our knowledge, it is the first such result for   higher order elliptic operators, and its proof relies on  Carleman estimates for the semiclassical biharmonic operator $h^4\Delta^2$, which is given in Proposition \ref{prop:Carle_est_biharmonic}. 

Another crucial ingredient of the proof is the derivation of the integral estimate \eqref{eq:integral_estimate}, which is a  consequence of the integral identity \eqref{eq:int_identity} and the local quantitative unique continuation. We remark that the derivation of \eqref{eq:int_identity} requires knowledge of the partial Dirichlet-to-Neumann map. By substituting the  CGO solutions \eqref{eq:v_form} and \eqref{eq:u2_form} into the integral estimate, we get the Fourier transform of $\mathrm{d}A_1-\mathrm{d}A_2$, as well as some error terms. Here, if we view the vector field $A$ as a one-form, its exterior derivative $\mathrm{d}A$ is a two-form defined by the formula
\begin{equation}
\label{eq:def_mag_field}
\mathrm{d}A=\sum_{1\le j<k\le n}(\p_{x_j}A_k-\p_{x_k}A_j)dx_j\wedge dx_k.
\end{equation}
We control the error terms by utilizing the decaying property of the remainder terms of the CGO solutions $u_1$ and $u_2$, which is given by \eqref{eq:est_r_domain}.  Subsequently, we decompose the $H^{-1}$-norm in terms of the Fourier transform \eqref{eq:H-1norm_split}, as well as careful choices of various parameters, to derive an estimate for $\|\mathrm{d}A_1-\mathrm{d}A_2\|_{H^{-1}(\R^n)}$. Following that, we apply the Sobolev embedding theorem and the interpolation inequality \cite[Theorem 7.22]{Grubb} to obtain an estimate for $\|\mathrm{d}A_1-\mathrm{d}A_2\|_{L^\infty(\Omega)}$, see the estimate \eqref{eq:est_dA_Linfty_domain}. We would like to recall that we assumed an \textit{a priori} bound of $\|A\|_{H^s(\Omega)}$, $s>\frac{n}{2}+1$, in the definition of  $\cA(M,s, A_0,\omega_0)$, so that we may apply the Sobolev embedding theorem in this step. 

The techniques utilized in the preceding analysis, particularly the construction of CGO solutions and the derivation of integral estimates, are   parallel to those used in the studies on the stability of elliptic operators, such as the Schr\"odinger operator \cite{Alessandrini,Fathallah} and the magnetic Schr\"odinger operator \cite{Ben_Joud,Tzou_stability}. These methodologies can also be adapted to establish stability for the biharmonic operator with zeroth order perturbations \cite{Choudhury_Heck,Choudhury_Krishnan,Liu_stability}. However, the presence of a first-order perturbation in the biharmonic operator introduces new challenges. To elaborate, the steps outlined above only enable us to deduce an estimate for $\|\mathrm{d}A_1-\mathrm{d}A_2\|_{L^\infty(\Omega)}$. Nevertheless, it is necessary to derive an estimate for $A_1-A_2$ in a suitable norm, since uniqueness of $A$ has been achieved, see for instance \cite{Krupchyk_Lassas_Uhlmann_poly}. To overcome this difficulty, we shall utilize a  decomposition of vector fields \cite[Theorem 3.3.2]{Sharafutdinov}: Every vector field $A$ can be written as $A=A^{\mathrm{sol}}+\nabla \varphi$,  where $A^{\mathrm{sol}}$ is  the \textit{solenoidal} part, and $\nabla \varphi$ is \textit{potential} part. In particular, the vector field $A^{\mathrm{sol}}$ is divergence-free and the function $\varphi$ satisfies $\varphi|_{\p \Omega}=0$. Thus, by \cite[Lemma 6.2]{Tzou_stability}, we get the inequality $\|A^{\mathrm{sol}}\|_{L^\infty(\Omega)}
\le 
\|\mathrm{d}A\|_{L^\infty(\Omega)}$.
In conjunction with the estimate \eqref{eq:est_dA_Linfty_domain}, this inequality gives us an estimate for $\|A_1^{\mathrm{sol}}-A_2^{\mathrm{sol}}\|_{L^\infty(\Omega)}$.

To complete the verification of the estimate \eqref{eq:estimate_A}, we still need an estimate for $\|\nabla \varphi\|_{L^\infty(\Omega)}$, which is given in Lemma \ref{lem:est_test_function_gradient}. To prove this result, we shall utilize the CGO solutions \eqref{eq:v_form} and \eqref{eq:u2_form} again to first deduce an estimate for $\| \varphi\|_{H^{-1}(\R^n)}$, followed by applying the Sobolev embedding theorem and the interpolation inequality to obtain an estimate for $\|\nabla \varphi\|_{L^\infty(\Omega)}$. From here, Theorem \ref{thm:estimate_A} follows easily from the estimates for $\|A^{\mathrm{sol}}\|_{L^\infty(\Omega)}$ and $\|\nabla \varphi\|_{L^\infty(\Omega)}$. 

This paper is organized as follows. In Section \ref{sec:prelim_results} we state and prove some preliminary results, which will be applied in the proof of the main results. They include the existence of CGO solutions and a local quantitative unique continuation of solutions to the boundary value problem \eqref{eq:nonhomogeneous_bvp}. We then proceed to prove Theorem \ref{thm:estimate_A} in Section \ref{sec:proof_magnetic}, followed by the verification of the log-log-type estimates of the zeroth order perturbation in Theorem \ref{thm:estimate_q}   in Section \ref{sec:proof_q}. Finally,  in Section \ref{sec:proof_q_no_A} we provide a proof for Theorem \ref{thm:estimate_q_no_A}, as well as a brief remark on the proof of Corollaries \ref{cor:estimate_q_Linfty} and \ref{cor:estimate_q_Linfty_no_A}.

\section{Preliminary Results}
\label{sec:prelim_results}

In this section we collect some preliminary results necessary to prove the main results of this paper. We first state a result concerning the existence of CGO solutions $u\in H^4(\Omega)$ to the equation $\mathcal{L}_{A,q}u=0$ in $\Omega$, followed by stating and proving a local quantitative unique continuation for solutions of the boundary value problem \eqref{eq:nonhomogeneous_bvp}.

\subsection{Existence of CGO solutions}
\label{subsec:CGO_solution}

A crucial element in the proof is the construction of CGO solutions of the form
\[
u(x,\zeta;h)=e^{\frac{x\cdot \zeta}{h}}(a(x,\zeta)+r(x,\zeta;h)).
\] 
Here $\zeta \in \C^n$ is a complex vector such that $\zeta\cdot \zeta=0$, $0<h\ll 1$ is a semiclassical parameter, $a$ is a smooth amplitude, and $r$ is a correction term that vanishes in the limit $h\to 0$. The existence of CGO solutions to the equation $\mathcal{L}_{A,q}u=0$ in $\Omega$ was established in \cite[Section 2]{Yang}, and we recall the statement for convenience of readers. In the proposition below, we equip the Sobolev space $H^1(\Omega)$ with a semiclassical norm
\[
\|v\|^2_{H^1_\scl(\Omega)}=\|v\|^2_{L^2(\Omega)}+\|hD v\|^2_{L^2(\Omega)}.
\]

\begin{prop}
\label{prop:CGO_solutions}

Let $A \in W^{1, \infty}(\Omega, \C^n), q \in L^\infty(\Omega, \C)$, and let $\zeta \in \C^n$ be a complex vector such that $\zeta \cdot \zeta = 0, \: \zeta = \zeta^{(0)}+\zeta^{(1)}$, where $\zeta^{(0)}$ is independent of $h>0$, $|\Re \zeta^{(0)}| = |\Im \zeta^{(0)}|=1$, and $\zeta^{(1)} = \mathcal{O}(h)$ as $h \to 0$. Then for all $h>0$ small enough, there exists a solution $u \in H^4(\Omega)$ to the equation $\mathcal{L}_{A,q}u=0$ in $\Omega$ of the form 
\begin{align*}
u(x, \zeta; h) = e^{\frac{ix \cdot \zeta}{h}}(a(x, \zeta^{(0)})+r(x, \zeta; h)),
\end{align*}
where the amplitude $a \in C^\infty(\overline{\Omega})$ satisfies the transport equation
\[
(\zeta^{(0)} \cdot \nabla)^2a = 0 \quad \text{in} \quad \Omega,
\]
and the remainder $r$ satisfies the estimate $\|r\|_{H^1_{\scl}(\Omega)} =\cO(h)$ as $h\to 0$.
\end{prop}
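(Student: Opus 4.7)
The approach is to substitute the CGO ansatz $u=e^{ix\cdot\zeta/h}(a+r)$ into $\mathcal{L}_{A,q}u=0$ and conjugate by $e^{ix\cdot\zeta/h}$. A direct calculation, using $\zeta\cdot\zeta=0$, gives $e^{-ix\cdot\zeta/h}\Delta^2 e^{ix\cdot\zeta/h} = \Delta^2+\tfrac{4i}{h}(\zeta\cdot\nabla)\Delta-\tfrac{4}{h^2}(\zeta\cdot\nabla)^2$, together with $e^{-ix\cdot\zeta/h}(A\cdot D)e^{ix\cdot\zeta/h}=A\cdot D+h^{-1}A\cdot\zeta$. After multiplying through by $h^4$, the highest-order term in $1/h$ is $-4h^2(\zeta\cdot\nabla)^2$, and requiring its leading piece $-4h^2(\zeta^{(0)}\cdot\nabla)^2 a$ to vanish (obtained by expanding $\zeta=\zeta^{(0)}+\zeta^{(1)}$ with $\zeta^{(1)}=\cO(h)$) produces exactly the transport equation stated in the proposition.

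The plan then proceeds in three steps. First, I would fix a smooth amplitude $a\in C^\infty(\overline\Omega)$ solving $(\zeta^{(0)}\cdot\nabla)^2 a=0$ in $\Omega$; since $\zeta^{(0)}$ is a constant complex vector with $\zeta^{(0)}\cdot\zeta^{(0)}=0$, such amplitudes are plentiful (for instance $a\equiv 1$, though greater flexibility will be needed later when deriving integral identities). Second, plugging $a+r$ into the conjugated equation yields an inhomogeneous equation $L_{\zeta,h}r=f$ in $\Omega$, where $L_{\zeta,h}$ denotes the full conjugated operator and, thanks to the transport equation together with $\zeta^{(1)}=\cO(h)$ and the boundedness of $A$ and $q$, the right-hand side satisfies $\|f\|_{L^2(\Omega)}=\cO(h^3)$.

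Third, and this is the technical core, I would solve $L_{\zeta,h}r=f$ with the bound $\|r\|_{H^1_\scl(\Omega)}=\cO(h)$. This solvability is obtained from a semiclassical Carleman estimate for the principal part $h^4\Delta^2$ with the linear weight $\varphi(x)=-x\cdot\Im\zeta^{(0)}$, combined with a Hahn-Banach/duality argument. The Carleman estimate, when applied to the formal adjoint, will take the schematic form $h^2\|w\|_{H^1_\scl(\Omega)}\lesssim \|L_{\zeta,h}^\ast w\|_{L^2(\Omega)}$ for $w\in C^\infty_c(\Omega)$; applying the Hahn-Banach theorem to the linear functional $L_{\zeta,h}^\ast w\mapsto \langle w,f\rangle$ then produces an $r$ with the required $\cO(h)$ estimate. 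The subprincipal perturbations $A\cdot D$ and $q$ are absorbed into this scheme because, relative to the principal semiclassical term $-4h^2(\zeta\cdot\nabla)^2$, they carry extra powers of $h$ after the normalization.

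The main obstacle will be the Carleman estimate itself, and in particular securing a two-derivative gain at the $h^2$-scale so as to control $r$ in $H^1_\scl$ rather than merely in $L^2$. Once the estimate for $h^4\Delta^2$ has been derived by the standard positive-commutator or integration-by-parts method—verifying that the conjugated symbol is subelliptic in the appropriate Poisson-bracket sense along the characteristic set of $(\zeta^{(0)}\cdot\nabla)^2$—the inclusion of the lower-order terms $A\cdot D+q$ follows by a Neumann-series perturbation argument under the standing $W^{1,\infty}\times L^\infty$ hypotheses on $(A,q)$.
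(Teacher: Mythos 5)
The paper does not actually prove this proposition; it cites \cite[Section 2]{Yang} and \cite[Proposition 2.4]{Krupchyk_Lassas_Uhlmann_poly}, both of which construct CGO solutions for perturbed polyharmonic operators by exactly the route you describe: conjugate, kill the dominant $h^{-2}$ term by the transport equation, show the forcing term has size $\cO(h^3)$ in $L^2$, and then solve for the remainder via a semiclassical Carleman estimate combined with a Hahn--Banach duality argument, at the cost of an $\cO(h^{-2})$ constant. Your sketch therefore matches the cited approach.

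One imprecision worth flagging in the duality step. As written, the schematic estimate $h^2\|w\|_{H^1_\scl(\Omega)}\lesssim\|L_{\zeta,h}^\ast w\|_{L^2(\Omega)}$ combined with Cauchy--Schwarz only bounds the functional $L_{\zeta,h}^\ast w\mapsto\langle w,f\rangle$ with respect to the $L^2$ norm of $L_{\zeta,h}^\ast w$, so Hahn--Banach returns $r\in L^2(\Omega)$ with $\|r\|_{L^2}=\cO(h)$, not yet $r\in H^1_\scl(\Omega)$. To land in $H^1_\scl$ directly one needs the companion estimate $\|w\|_{L^2(\Omega)}\lesssim h^{-2}\|L_{\zeta,h}^\ast w\|_{H^{-1}_\scl(\Omega)}$ and runs Hahn--Banach over $H^{-1}_\scl$ (equivalently, one solves in $L^2$ first and upgrades by semiclassical elliptic regularity for the conjugated operator). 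Both estimates are members of the same family with shifted Sobolev indices; in the cited references this family is obtained by convexifying the linear limiting Carleman weight, $\varphi\mapsto\varphi+\tfrac{h}{2\epsilon}\varphi^2$, and this is precisely the ``two-derivative gain at the $h^2$-scale'' you correctly identify as the technical bottleneck. So the gap is one of formulation rather than substance, but it is exactly the point where a careless write-up would produce only an $L^2$ bound on $r$.
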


\subsection{Local quantitative unique continuation}
\label{subsec:Carleman_est}

The main purpose of this subsection is to state and prove a quantitative local unique continuation for the solutions of the boundary value problem \begin{equation}
\label{eq:nonhomogeneous_bvp}
\begin{cases}
\mathcal{L}_{A, q} w=F \quad \text{in} \quad \Omega,
\\
w=0 \quad \text{on} \quad \p \Omega,
\\
\Delta w=0 \quad \text{on} \quad \p \Omega.
\end{cases}
\end{equation}
It is a consequence of the following Carleman estimate for the semiclassical biharmonic operator $h^4\Delta^2$.

\begin{prop}
\label{prop:Carle_est_biharmonic}
Let $\Gamma \subseteq \p \Omega$ be a nonempty open set, and let the function $\psi \in C^\infty(\overline{\Omega}, \R)$ be such that $\psi \ge 0$, $|\nabla \psi|>0$ in $\overline{\Omega}$, and $\p_\nu \psi|_{\p \Omega \setminus \Gamma} \le 0$.  Let $\varphi=e^{\beta_0\psi}$, where $\beta_0 \gg 1$. Then there exists a  constant $h_0>0$ such that for all $0<h\le h_0$ and all functions $u\in H^4(\Omega)$ satisfying $u|_{\p \Omega}=\Delta u|_{\p \Omega}=0$, we have the estimate
\begin{equation}
\label{eq:Car_est_biharmonic}
h\int_{\Omega} e^{\frac{2\varphi}{h}} \left(|u|^2+|h\nabla u|^2\right)dx
\lesssim
\int_{\Omega} e^{\frac{2\varphi}{h}} |h^4\Delta^2 u|^2 dx + h \int_{\Gamma} e^{\frac{2\varphi}{h}} \left(|h \p_\nu u|^2+|h \p_\nu (h^2\Delta u)|^2\right) dS.
\end{equation}
\end{prop}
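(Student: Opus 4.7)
The plan is to prove this biharmonic Carleman estimate by iterating the standard semiclassical Carleman estimate for $P := h^2\Delta$ with Dirichlet data, exploiting the factorization $h^4\Delta^2 = P \circ P$ together with the Navier boundary conditions $u|_{\partial\Omega} = (\Delta u)|_{\partial\Omega} = 0$, which furnish vanishing Dirichlet traces for both $u$ and $v := h^2\Delta u$.

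First, I would invoke (or establish) the corresponding second-order Carleman estimate under the same exponential weight $\varphi = e^{\beta_0\psi}$: for $\beta_0 \gg 1$ and $0 < h \leq h_0$ sufficiently small, for every $w \in H^2(\Omega)$ with $w|_{\partial\Omega} = 0$, one has
\[
h\int_\Omega e^{2\varphi/h}\bigl(|w|^2 + |h\nabla w|^2\bigr)\,dx \lesssim \int_\Omega e^{2\varphi/h}|h^2\Delta w|^2\,dx + h\int_\Gamma e^{2\varphi/h}|h\partial_\nu w|^2\,dS.
\]
This is the textbook semiclassical Carleman estimate for second-order operators with convex weight: conjugate by $e^{\varphi/h}$ to form $P_\varphi = e^{\varphi/h} P e^{-\varphi/h}$, split $P_\varphi = A + B$ into its Hermitian and anti-Hermitian parts, expand $\|P_\varphi w\|^2 = \|Aw\|^2 + \|Bw\|^2 + \langle [A,B]w,w\rangle$ modulo boundary contributions, and use the strict positivity of the commutator's principal symbol on the characteristic set, which is engineered by choosing $\beta_0$ large in the convex weight $\varphi = e^{\beta_0\psi}$. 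The hypothesis $\partial_\nu\psi \leq 0$ on $\partial\Omega \setminus \Gamma$ arranges the integration-by-parts boundary terms so that only the contribution over $\Gamma$ survives with the correct sign.

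Next, the strategy is to apply this second-order estimate to both $u$ and $v := h^2\Delta u$. The Navier condition $(\Delta u)|_{\partial\Omega}=0$ is precisely what guarantees $v|_{\partial\Omega} = 0$, so the Dirichlet hypothesis holds at both levels. Substituting the resulting inequality for $v$ (which controls $|h^2\Delta u|^2$ by $|h^4\Delta^2 u|^2$ plus a boundary term involving $\partial_\nu(h^2\Delta u)$ on $\Gamma$) into the right-hand side of the inequality for $u$ yields the desired biharmonic Carleman estimate \eqref{eq:Car_est_biharmonic}, up to the careful tracking of constants depending on $\beta_0$ and $h_0$.

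The main technical obstacle is the precise bookkeeping of semiclassical powers during this combination: a naive substitution overspends a factor of $h^{-1}$ on the bulk $|h^4\Delta^2 u|^2$ term. To recover the sharp $h$-dependence as stated, I would either leverage the additional $\|h\nabla v\|^2$ control afforded by the second-order estimate applied to $v$ (absorbing it back into the left-hand side at the price of enlarged constants, once $\beta_0$ is fixed large enough), or else perform a direct Carleman computation on the conjugated bilaplacian $L_\varphi := P_\varphi^2$, expanding $\|L_\varphi w\|^2$ via the Hermitian/anti-Hermitian decomposition $L_\varphi = (A^2+B^2) + (AB+BA)$ and controlling the commutator $[A^2+B^2, AB+BA]$ by convexification. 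In either approach the Navier conditions are indispensable, ensuring that every boundary integral arising from integration by parts reduces to precisely the two terms $h\partial_\nu u|_\Gamma$ and $h\partial_\nu(h^2\Delta u)|_\Gamma$ appearing on the right-hand side of \eqref{eq:Car_est_biharmonic}.
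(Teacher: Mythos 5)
Your strategy coincides exactly with the paper's: take the boundary Carleman estimate for the second-order operator $h^2\Delta$ with Dirichlet data (the paper cites it from \cite{Zhao_Yuan} rather than re-deriving it through the conjugation and commutator-positivity computation you outline, which is the efficient choice since that estimate is standard), apply it once to $u$ and once to $v = h^2\Delta u$ (the Navier conditions give $u|_{\partial\Omega} = v|_{\partial\Omega} = 0$), and chain the two inequalities. So at the level of the argument the two proofs are the same.

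The bookkeeping concern you raise is genuine, and in fact the paper's proof does not resolve it in the way the stated constant suggests. Chaining \eqref{eq:Car_est_Laplacian} with \eqref{eq:Car_est_Laplacian_apply} (with the $|h\nabla(h^2\Delta u)|^2$ term dropped, as the paper does) gives
\[
h^2\int_\Omega e^{2\varphi/h}\bigl(|u|^2+|h\nabla u|^2\bigr)\,dx
\lesssim
\int_\Omega e^{2\varphi/h}|h^4\Delta^2 u|^2\,dx + h\int_\Gamma e^{2\varphi/h}\bigl(|h\partial_\nu u|^2 + |h\partial_\nu(h^2\Delta u)|^2\bigr)\,dS,
\]
i.e.\ an $h^2$ prefactor on the left rather than $h$, which is a factor of $h$ weaker than \eqref{eq:Car_est_biharmonic} as written. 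This discrepancy is harmless in the only place the estimate is used, Lemma~\ref{lem:unique_continuation}, because after fixing $\varphi = e^{\beta_0\psi}$ with $\beta_0$ large, all terms scale like $e^{\pm C/h}$ and completely swamp any fixed power of $h$; the unique continuation estimate \eqref{eq:unique_continuation} comes out unchanged. So the correct takeaway from your observation is not that the chaining fails, but that one should simply accept the slightly weaker $h^2$-version, which is all the application needs.

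On your two proposed remedies: the first, keeping the $\|h\nabla(h^2\Delta u)\|^2$ term from the second application, does not obviously recover the missing factor of $h$ --- that quantity is an interior $L^2$ bound on a gradient and there is no term in the estimate for $u$ that it would absorb. The second, a direct Carleman computation on the conjugated bilaplacian, is indeed the route that recovers the sharp $h$-prefactor (the Fursikov--Imanuvilov-type estimate for a $2m$-th order operator gives a prefactor $s^{4m-1}$, which for $m=2$ translates, after semiclassical normalization, to exactly the single power of $h$ in \eqref{eq:Car_est_biharmonic}), but it is substantially more work than the two-line iteration, and for the purposes of this paper the sharp power is unnecessary.
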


\begin{proof}
Our starting point is the following $L^2$-weighted inequality for the semiclassical Laplacian $-h^2\Delta$, which was originally proved in \cite[Lemma 3.2]{Zhao_Yuan}, see also \cite{Ben_Joud,Krupchyk_Uhlmann_stability}. For any function $u\in H^2(\Omega)$ such that $u=0$ on $\p \Omega$, we have
\begin{equation}
\label{eq:Car_est_Laplacian}
h\int_{\Omega} e^{\frac{2\varphi}{h}} \left(|u|^2+|h\nabla u|^2\right)dx
\lesssim
\int_{\Omega} e^{\frac{2\varphi}{h}} |h^2\Delta u|^2 dx + h \int_{\Gamma} e^{\frac{2\varphi}{h}} |h \p_\nu u|^2 dS.
\end{equation}

By applying \eqref{eq:Car_est_Laplacian} to the function $h^2\Delta u$ such that $\Delta u|_{\p \Omega}=0$, we get that
\begin{equation}
\label{eq:Car_est_Laplacian_apply}
h\int_{\Omega} e^{\frac{2\varphi}{h}} \left(|h^2\Delta u|^2+|h\nabla (h^2\Delta u)|^2\right)dx
\lesssim
\int_{\Omega} e^{\frac{2\varphi}{h}} |h^4\Delta^2 u|^2 dx + h \int_{\Gamma} e^{\frac{2\varphi}{h}} |h \p_\nu (h^2\Delta u)|^2 dS.
\end{equation}
From here, we obtain the estimate \eqref{eq:Car_est_biharmonic} by combining \eqref{eq:Car_est_Laplacian} and \eqref{eq:Car_est_Laplacian_apply}, where we have omitted the $|h\nabla(h^2\Delta u)|^2$ term on the left-hand side of \eqref{eq:Car_est_Laplacian_apply}. This completes the proof of Proposition \ref{prop:Carle_est_biharmonic}.
\end{proof}

We are now ready to state and prove the main result of this section. It extends \cite[Proposition 2.1]{Fathallah} for the Schr\"odinger operator, as well as \cite[Lemma 2.4]{Ben_Joud} in the case of the magnetic Schr\"odinger operator, to the setting of higher order elliptic operators. 

\begin{lem}
\label{lem:unique_continuation}
Let $\omega_j \subset \Omega$ be neighborhoods of $\p \Omega$ such that $\overline{\omega}_{j}\subset \omega_{j-1}$, $j=1,2,3$, and $\p \Omega \subset \p \omega_j$, $j=0,1,2,3$.  Let  $\Gamma_0\subset \p \Omega$ be an arbitrary open subset. Let $A \in W^{1,\infty}(\overline{\Omega}, \C^n)$ and $q \in L^\infty(\Omega, \C)$. Let $F\in L^2(\Omega)$ be a function, and let $w\in H^4(\Omega)$ be a solution of the boundary value problem \eqref{eq:nonhomogeneous_bvp}. 
Then there exist positive constants $ \alpha_1, \alpha_2$, and $h_0$ such that the estimate 
\begin{equation}
\label{eq:unique_continuation}
\|w\|_{H^1(\omega_2 \setminus \omega_3)}
\lesssim
e^{-\frac{\alpha_1}{h}}\left(\|w\|_{H^3(\Omega)}
+\|F\|_{L^2(\omega_0)}\right)
+
e^{\frac{\alpha_2}{h}} \left(\|\p_\nu w\|_{H^{\frac{5}{2}}(\Gamma_0)}
+ 
\|\p_\nu(\Delta w)\|_{H^{\frac{1}{2}}(\Gamma_0)}\right)
\end{equation}
is valid for any $0<h\le h_0$. Here  $\alpha_1$ and $\alpha_2$ depend on $\Omega, M, n, h_0$, and $\omega_j$, but are independent from $A,q, F, w$, and $h$.
\end{lem}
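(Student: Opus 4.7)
The plan is to apply the semiclassical Carleman estimate of Proposition \ref{prop:Carle_est_biharmonic} to a cut-off version of $w$, with a weight function whose level sets strictly separate the annular region $\omega_2\setminus\omega_3$ from the transition region where the cutoff varies. First I would fix a cutoff $\chi\in C_c^\infty(\Omega)$ with $\chi\equiv 1$ on $\omega_2$ and $\supp\chi\subset\omega_1$; since $\chi\equiv 1$ in a neighborhood of $\partial\Omega$, the product $u:=\chi w$ inherits the Navier boundary conditions $u|_{\partial\Omega}=\Delta u|_{\partial\Omega}=0$ from $w$, so \eqref{eq:Car_est_biharmonic} applies directly to $u$. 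In parallel I would construct a weight $\psi\in C^\infty(\overline{\Omega},\R)$ satisfying the hypotheses of Proposition \ref{prop:Carle_est_biharmonic} with $\Gamma=\Gamma_0$, together with the strict separation
\[
\min_{\overline{\omega_2\setminus\omega_3}}\psi \;>\; \max_{\overline{\omega_1\setminus\omega_2}}\psi.
\]
A convenient model is $\psi(x)=K-|x-x_0|^2$ for $K>0$ large and $x_0\notin\overline{\Omega}$ chosen so that $(x-x_0)\cdot\nu(x)\geq 0$ on $\partial\Omega\setminus\Gamma_0$; varying $x_0$ toward $\Gamma_0$ tunes the level-set ordering. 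Setting $\varphi:=e^{\beta_0\psi}$ for $\beta_0\gg 1$, and writing $c_1:=\max_{\omega_1\setminus\omega_2}\varphi<c_2:=\min_{\omega_2\setminus\omega_3}\varphi$, the weight $\varphi$ is exponentially larger on the region of interest than on the commutator support.

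Applying \eqref{eq:Car_est_biharmonic} to $u=\chi w$ and expanding
\[
\Delta^2(\chi w)=\chi F-\chi A\cdot Dw-\chi qw+[\Delta^2,\chi]w,
\]
the lower-order terms $\chi A\cdot Dw$ and $\chi qw$ contribute at most $O(h^6 M^2)$ times the left-hand side of the Carleman estimate, using $\|A\|_{L^\infty},\|q\|_{L^\infty}\leq M$, and are absorbed into the LHS for $h\leq h_0=h_0(M)$ small enough. The commutator $[\Delta^2,\chi]$ is a third-order differential operator with smooth coefficients supported in $\omega_1\setminus\omega_2$, so its contribution is bounded by $C h^8 e^{2c_1/h}\|w\|^2_{H^3(\Omega)}$. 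The source term contributes $C h^8 e^{2\|\varphi\|_{L^\infty(\omega_0)}/h}\|F\|^2_{L^2(\omega_0)}$, and the boundary integrals on $\Gamma_0$ reduce---using $\partial_\nu(\chi w)|_{\partial\Omega}=\partial_\nu w$, $\partial_\nu\Delta(\chi w)|_{\partial\Omega}=\partial_\nu\Delta w$, and the trace inequality---to $C e^{2\|\varphi\|_{L^\infty(\Gamma_0)}/h}\bigl(\|\partial_\nu w\|^2_{H^{5/2}(\Gamma_0)}+\|\partial_\nu\Delta w\|^2_{H^{1/2}(\Gamma_0)}\bigr)$. Restricting the LHS of the Carleman estimate to $\omega_2\setminus\omega_3$ gives the lower bound $h\,e^{2c_2/h}\|w\|^2_{H^1_{\scl}(\omega_2\setminus\omega_3)}$; dividing and taking square roots then produces an estimate of the form \eqref{eq:unique_continuation}, with $\alpha_1$ determined by the gap $c_2-c_1$ (which controls the $\|w\|_{H^3}$ contribution) and $\alpha_2$ determined by $\|\varphi\|_\infty-c_2$ (which controls the boundary contribution).

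The step I expect to demand the most care is the construction of $\psi$: one must simultaneously enforce $|\nabla\psi|>0$ on all of $\overline{\Omega}$, the signed normal condition $\partial_\nu\psi\leq 0$ on the possibly small complement $\partial\Omega\setminus\Gamma_0$, and the prescribed separation of $\psi$-values across the four nested collars $\omega_0\supset\omega_1\supset\omega_2\supset\omega_3$, without assuming $\Omega$ to be convex. Analogous constructions for second-order operators are carried out in \cite{Fathallah,Ben_Joud}; adapting them to the higher-order Navier setup here---in particular arranging the weight finely enough that the exponent on $\|F\|_{L^2(\omega_0)}$ appears as $-\alpha_1$ rather than the naively positive value produced by the direct bookkeeping, which may require further splitting $\omega_0$ into a "deep" piece where $\varphi\leq c_1$ and a small piece that can be absorbed into the left-hand side---is the principal technical hurdle. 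Once such a $\psi$ is in hand, the remaining steps are routine Carleman bookkeeping adapted to the biharmonic operator via \eqref{eq:Car_est_biharmonic}.
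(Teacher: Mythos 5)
Your overall plan---cut off, invoke Proposition~\ref{prop:Carle_est_biharmonic}, and arrange a strict separation of the weight values between the region of interest and the cutoff transition layer---is the right type of argument, but the geometry you set up is incompatible with the boundary condition that the Carleman weight must satisfy, and this cannot be repaired by a cleverer choice of $\psi$. You place the transition of $\chi$ in the shell $\omega_1\setminus\omega_2$ and ask for
\[
\min_{\overline{\omega_2\setminus\omega_3}}\psi \;>\; \max_{\overline{\omega_1\setminus\omega_2}}\psi .
\]
Since $\omega_3\subset\omega_2\subset\omega_1$ are nested collars of $\partial\Omega$, the shell $\omega_1\setminus\omega_2$ lies strictly deeper inside $\Omega$ than $\omega_2\setminus\omega_3$. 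At the same time, the Carleman hypothesis demands $\partial_\nu\psi\le 0$ on $\partial\Omega\setminus\Gamma_0$, i.e.\ $\psi$ is nonincreasing in the outward normal direction there, hence nondecreasing as one moves \emph{into} $\Omega$ near $\partial\Omega\setminus\Gamma_0$. Thus near $\partial\Omega\setminus\Gamma_0$ the deeper shell $\omega_1\setminus\omega_2$ carries \emph{larger} $\psi$-values than the shallower shell $\omega_2\setminus\omega_3$, forcing $\max_{\overline{\omega_1\setminus\omega_2}}\psi>\min_{\overline{\omega_2\setminus\omega_3}}\psi$. Your quadratic model $\psi=K-|x-x_0|^2$ exhibits exactly this: the level sets are spheres about $x_0$, and no placement of $x_0$ reverses this ordering on both shells simultaneously. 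So the separation you need can never be achieved, and the error term $\|w\|_{H^3}$ coming from the commutator $[\Delta^2,\chi]$ picks up the \emph{wrong} sign in the exponent.

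The paper avoids this by working on the collar $\omega_0$ rather than on all of $\Omega$, and by constructing the weight via Theorem~\ref{thm:special_functions} (Fursikov--Imanuvilov) \emph{on $\omega_0$}: $\psi>0$ in $\omega_0$, $\psi=0$ and $\partial_\nu\psi\le 0$ on $\partial\omega_0\setminus\Gamma_0$, where $\partial\omega_0\setminus\Gamma_0$ includes the \emph{inner} boundary $\partial\omega_0\setminus\partial\Omega$ of the collar. The cutoff $\theta$ is then taken to transition in a thin layer $\omega'\setminus\omega''$ hugging that inner boundary, disjoint from $\overline{\omega_1}$, where $\psi\le\kappa$ is guaranteed because $\psi$ vanishes on $\partial\omega_0\setminus\Gamma_0$; meanwhile $\overline{\omega_2\setminus\omega_3}$ is a compact subset of $\omega_0$, so $\psi\ge 2\kappa>0$ there. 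The two estimates \eqref{eq:lower_bound_psi}--\eqref{eq:upper_bound_psi} give precisely the separation that your setup cannot realize. Relocating the domain of the Carleman estimate from $\Omega$ to $\omega_0$, and placing the cutoff near $\partial\omega_0\setminus\partial\Omega$ rather than in $\omega_1\setminus\omega_2$, is the essential idea that your proposal is missing; without it the commutator and $F$-terms cannot be made exponentially small relative to the left-hand side, which is the difficulty you yourself flag at the end.
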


In order to prove Lemma \ref{lem:unique_continuation}, we need a weight function with special properties. The existence of such a function is established in for instance \cite[Lemma 1.1]{Fursikov_Imanuvilov}.

\begin{thm}
\label{thm:special_functions}
Let $\emptyset \ne \Gamma_0 \subset \p\Omega$ be an arbitrary open subset. Then there exists a function $\psi\in C^\infty(\overline{\Omega}, \R)$ such that $\psi(x)>0$ for all $x\in \Omega$, $|\nabla \psi(x)|>0$ for all $x\in \overline{\Omega}$, $\psi|_{\p\Omega \setminus \Gamma_0}=0$, and $\p_\nu \psi|_{\p\Omega \setminus \Gamma_0}\le 0$. 
\end{thm}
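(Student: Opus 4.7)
The plan is to follow the classical Fursikov--Imanuvilov construction cited in the statement. The idea is to enlarge $\Omega$ through $\Gamma_0$, build a smooth positive function on the enlarged domain with only finitely many isolated critical points, and then use an ambient diffeomorphism to push every critical point out of $\overline{\Omega}$ into the collar region, before restricting back.

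First I would enlarge $\Omega$ to a slightly larger smooth bounded domain $\widetilde{\Omega}$ so that $\partial\Omega\setminus\Gamma_0\subset\partial\widetilde{\Omega}$ and the added region $\widetilde{\Omega}\setminus\overline{\Omega}$ is a thin collar attached through $\Gamma_0$. On $\widetilde{\Omega}$ I would let $f$ solve $-\Delta f=1$ in $\widetilde{\Omega}$ with $f|_{\partial\widetilde{\Omega}}=0$. Elliptic regularity together with the smoothness of $\partial\widetilde{\Omega}$ gives $f\in C^\infty(\overline{\widetilde{\Omega}})$, and the maximum principle gives $f>0$ in $\widetilde{\Omega}$; in particular $f$ vanishes on $\partial\Omega\setminus\Gamma_0$ and is strictly positive on $\overline{\Omega}\setminus(\partial\Omega\setminus\Gamma_0)$. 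A standard small $C^\infty$ perturbation, supported away from a neighborhood of $\partial\Omega\setminus\Gamma_0$, produces a new function (still denoted $f$) whose critical points in $\overline{\widetilde{\Omega}}$ form a finite set of non-degenerate (Morse) points, all lying in the interior of $\widetilde{\Omega}$. Keeping the perturbation small preserves positivity on $\overline{\Omega}\setminus(\partial\Omega\setminus\Gamma_0)$ and the vanishing on $\partial\Omega\setminus\Gamma_0$.

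The heart of the argument, and the step I expect to be the main obstacle, is removing the finitely many critical points $p_1,\ldots,p_k$ of $f$ that happen to sit in $\overline{\Omega}$. Because $\widetilde{\Omega}$ is open and connected and the collar $\widetilde{\Omega}\setminus\overline{\Omega}$ is reachable from every point of $\overline{\Omega}$ without crossing $\partial\Omega\setminus\Gamma_0$, one can select smooth embedded, pairwise disjoint paths $\gamma_j:[0,1]\to\widetilde{\Omega}$ with $\gamma_j(0)=p_j$, $\gamma_j(1)\in\widetilde{\Omega}\setminus\overline{\Omega}$, and $\gamma_j([0,1])$ disjoint from $\partial\Omega\setminus\Gamma_0$. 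By the isotopy-extension theorem, these paths can be realized by a diffeomorphism $\Phi$ of $\overline{\widetilde{\Omega}}$ that equals the identity on a neighborhood of $\partial\Omega\setminus\Gamma_0$ and satisfies $\Phi(p_j)=\gamma_j(1)$ for every $j$; concretely, $\Phi$ can be obtained by concatenating time-one flows of smooth vector fields compactly supported in $\widetilde{\Omega}\setminus(\partial\Omega\setminus\Gamma_0)$ and tangent to each $\gamma_j$. Setting $\widetilde{f}:=f\circ\Phi^{-1}$, the chain rule together with the invertibility of $D\Phi^{-1}$ yields $\mathrm{Crit}(\widetilde{f})=\Phi(\mathrm{Crit}(f))$, so every critical point that used to lie in $\overline{\Omega}$ has now been pushed into $\widetilde{\Omega}\setminus\overline{\Omega}$.

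Finally I would set $\psi:=\widetilde{f}|_{\overline{\Omega}}$ and verify the four required properties. Positivity of $\psi$ on $\Omega$ and the identity $\psi=0$ on $\partial\Omega\setminus\Gamma_0$ follow from the corresponding properties of $f$ together with the fact that $\Phi$ fixes a neighborhood of $\partial\Omega\setminus\Gamma_0$. The condition $|\nabla\psi|>0$ on $\overline{\Omega}$ holds because $\mathrm{Crit}(\widetilde{f})\cap\overline{\Omega}=\emptyset$ by construction. The sign condition $\partial_\nu\psi\le 0$ on $\partial\Omega\setminus\Gamma_0$ is then automatic: at any $x_0$ in this portion of the boundary one has $\psi(x_0)=0$ while $\psi>0$ throughout $\Omega$, so the outward normal derivative of $\psi$ at $x_0$ cannot be positive. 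The only genuinely delicate piece is the construction of the boundary-fixing diffeomorphism $\Phi$; once that is in place, all remaining verifications are immediate from the pointwise properties of $f$ and $\Phi$.
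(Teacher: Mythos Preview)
The paper does not supply its own proof of this theorem; it simply records the statement and defers to \cite[Lemma 1.1]{Fursikov_Imanuvilov}, and your sketch is precisely the classical Fursikov--Imanuvilov construction carried out correctly in all essential respects. One small point worth tightening: when you build the isotopy $\Phi$, make the paths $\gamma_j$ and their supporting tubes avoid the finitely many critical points of $f$ that already lie in the collar $\widetilde{\Omega}\setminus\overline{\Omega}$, so that $\Phi$ fixes those points and they are not accidentally dragged back into $\overline{\Omega}$; otherwise $\mathrm{Crit}(\widetilde{f})=\Phi(\mathrm{Crit}(f))$ need not be disjoint from $\overline{\Omega}$.
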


\begin{proof}[Proof of Lemma \ref{lem:unique_continuation}]
We shall follow the arguments in the proof of \cite[Lemma 2.4]{Ben_Joud} closely. Let $0<h\ll 1$ be a semiclassical parameter, and let us rewrite the boundary value problem \eqref{eq:nonhomogeneous_bvp} semiclassically as
\begin{equation}
\label{eq:nonhomogeneous_bvp_semiclassical}
\begin{cases}
h^4\mathcal{L}_{A, q} w = h^4F \quad \text{in} \quad \Omega,
\\
w=0 \quad \text{on} \quad \p \Omega,
\\
h^2 \Delta w=0 \quad \text{on} \quad \p \Omega.
\end{cases}
\end{equation}

By Theorem \ref{thm:special_functions}, there exists a function $\psi \in C^\infty(\overline{\omega_0}, \R)$ satisfying the properties
\[
\psi(x)>0 \text{ for all } x\in \omega_0, \quad |\nabla \psi(x)|>0 \text{ for all } x\in \overline{\omega_0}, \quad \psi|_{\p\omega_0\setminus \Gamma_0}=0, \quad \p_\nu \psi|_{\p\omega_0\setminus \Gamma_0} \le 0.
\]
Let $\beta_0>0$ be a sufficiently large constant such that the function $\varphi=e^{\beta_0\psi}$ satisfies the hypothesis of Proposition \ref{prop:Carle_est_biharmonic}.  

Due to the fact that $\psi(x)>0$ for all $x\in \omega_0$, there exists a constant $\kappa>0$ such that
\begin{equation}
\label{eq:lower_bound_psi}
\psi(x)\ge 2\kappa, \quad x\in \omega_2\setminus \omega_3. 
\end{equation}
On the other hand, since $\psi =0$ on the set $\p\omega_0\setminus \Gamma_0$, there exists a small neighborhood $\omega'$ of $\p\omega_0\setminus \Gamma_0$ such that $\omega'\cap \overline{\omega_1}=\emptyset$ and 
\begin{equation}
\label{eq:upper_bound_psi}
\psi(x)\le \kappa, \quad x\in \omega'.
\end{equation}
Let $\omega''\subset \omega'$ be an arbitrary fixed neighborhood of $\p \omega_0\setminus \p \Omega$, and let $\theta \in C^\infty(\overline{\omega_0})$ be a cut-off function such that $0\le \theta\le 1$ and  
\[
\theta(x) = 
\begin{cases}
1, \quad x\in \omega_0 \setminus \omega'',
\\
0, \quad x\in \omega''.
\end{cases}
\]
Then it follows from direct computations  that the function $ \tilde w=\theta w$, where $w$ is a solution to the boundary value problem \eqref{eq:nonhomogeneous_bvp_semiclassical}, satisfies the equation
\begin{equation}
\label{eq:nonhomo_cutoff}
\begin{cases}
h^4\mathcal{L}_{A, q}  \tilde w = h^4\theta F +Q(x,D)w \quad \text{in} \quad \omega_0,
\\
\tilde w=0 \quad \text{on} \quad \p \Omega,
\\
h^2 \Delta  \tilde w=0 \quad \text{on} \quad \p \Omega,
\end{cases}
\end{equation}
where $Q(x,D)$ is a third order differential operator supported in $\omega \setminus \omega''$ given by the formula
\[
Q(x,D)w 
= 
[h^4\Delta^2,\theta]w+h^3A\cdot[hD,\theta]w. 
\]
Here and henceforth $[A,B]=AB-BA$ denotes the commutator of two operators $A$ and $B$.

By applying the Carleman estimate \eqref{eq:Car_est_biharmonic} to the semiclassical biharmonic operator $h^4\Delta^2$ on the domain $\omega_0$ and the function $ \tilde w$ satisfying \eqref{eq:nonhomo_cutoff}, we see that there exists a positive constant $h_0$ such that the estimate
\begin{equation}
\label{eq:Car_est_biharmonic_term}
h\int_{\omega_0} e^{\frac{2\varphi}{h}} \left(| \tilde w|^2+|h\nabla  \tilde w|^2\right)dx
\lesssim
\int_{\omega_0} e^{\frac{2\varphi}{h}} |h^4\Delta^2  \tilde w|^2 dx + h \int_{\Gamma_0} e^{\frac{2\varphi}{h}} \left(|h \p_\nu  \tilde w|^2+|h \p_\nu (h^2\Delta  \tilde w)|^2\right) dS
\end{equation}
holds for all $0<h\le h_0$.

Thanks to the inequalities
\[
\int_{\omega_0} e^{\frac{2\varphi}{h}} |h^4q \tilde w|^2 dx
\lesssim
h \int_{\omega_0} e^{\frac{2\varphi}{h}} | \tilde w|^2dx 
\quad \text{and} \quad
\int_{\omega_0} e^{\frac{2\varphi}{h}} |h^3A\cdot (hD \tilde w)|^2 dx
\lesssim 
h \int_{\omega_0} e^{\frac{2\varphi}{h}} |h\nabla  \tilde w|^2 dx,  
\]
which follow immediately from the fact that $0< h \ll 1$, we may perturb \eqref{eq:Car_est_biharmonic_term} by the lower order terms $h^3A \cdot (hD)+h^4q$ to obtain
\[
h\int_{\omega_0} e^{\frac{2\varphi}{h}} \left(| \tilde w|^2+|h\nabla  \tilde w|^2\right)dx
\lesssim
\int_{\omega_0} e^{\frac{2\varphi}{h}} |h^4\mathcal{L}_{A, q}  \tilde w |^2 dx + h \int_{\Gamma_0} e^{\frac{2\varphi}{h}} \left(|h \p_\nu  \tilde w|^2+|h \p_\nu (h^2\Delta  \tilde w)|^2\right) dS.
\]
In view of \eqref{eq:nonhomo_cutoff}, we have
\begin{equation}
\label{eq:Car_est_perturbed_cutoff}
\begin{aligned}
&h\int_{\omega_0} e^{\frac{2\varphi}{h}} \left(| \tilde w|^2+|h\nabla  \tilde w|^2\right)dx
\\
&\lesssim
\int_{\omega_0} e^{\frac{2\varphi}{h}} \left(|h^4F(x) |^2+|Q(x,D)w|^2\right) dx 
+ 
h \int_{\Gamma_0} e^{\frac{2\varphi}{h}} \left(|h \p_\nu  \tilde w|^2+|h \p_\nu (h^2 \Delta  \tilde w)|^2\right) dS.
\end{aligned}
\end{equation}

Let us next estimate the integrals in \eqref{eq:Car_est_perturbed_cutoff}. To this end, we  choose the weight function $\varphi=e^{\beta_0\psi}$ with $\beta_0$ sufficiently large. For the left-hand side, due to the fact that $\theta=1$ on $\omega_2 \setminus \omega_3$, which is a subset of   $\omega_0 \setminus \omega''$, it follows from inequality \eqref{eq:lower_bound_psi} that
\begin{equation}
\label{eq:Car_est_LHS}
h\int_{\omega_0} e^{\frac{2\varphi}{h}} \left(| \tilde w|^2+|h\nabla  \tilde w|^2\right)dx 
\gtrsim
he^{\frac{2}{h}e^{2\beta_0\kappa}}\|w\|^2_{H^1_\scl(\omega_2 \setminus \omega_3)}. 
\end{equation}

Turning our attention to the right-hand side of \eqref{eq:Car_est_perturbed_cutoff}, by the definition of the cut-off function $\theta$, we see that $\supp (Q(x,D)) \subset \omega'\setminus \omega''$. Furthermore, as $Q(x,D)$ is a third order differential operator, we apply  inequality \eqref{eq:upper_bound_psi}   to deduce
\begin{equation}
\label{eq:est_Q_term}
\int_{\omega_0} e^{\frac{2\varphi}{h}}|Q(x,D)w|^2dx
\lesssim
h^4e^{\frac{2}{h}e^{\beta_0\kappa}} \|w\|^2_{H^3(\omega'\setminus \omega'')}.
\end{equation}
Similarly, we apply \eqref{eq:upper_bound_psi} again to obtain
\begin{equation}
\label{eq:est_F_term}
\int_{\omega_0} e^{\frac{2\varphi}{h}} |h^4F(x)|^2dx
\lesssim
h^4e^{\frac{2}{h}e^{\beta_0\kappa}} \|F\|^2_{L^2(\omega_0)}.
\end{equation}
Finally, to estimate the boundary terms, it follows from the trace theorem that
\begin{equation}
\label{eq:est_boundary_term}
h \int_{\Gamma_0} e^{\frac{2\varphi}{h}} \left(|h \p_\nu  \tilde w|^2 +|h \p_\nu (h^2 \Delta  \tilde w)|^2\right) dS
\lesssim
h^2 e^{\frac{2}{h}e^{\beta_0\|\psi\|_{L^\infty}}} \left(\|\p_\nu w\|^2_{H^{\frac{5}{2}}(\Gamma_0)}+ \|\p_\nu(\Delta w)\|^2_{H^{\frac{1}{2}}(\Gamma_0)}\right).
\end{equation}
Hence, we combine \eqref{eq:Car_est_LHS}--\eqref{eq:est_boundary_term} to obtain the  estimate
\begin{equation}
\label{eq:est_norms}
\begin{aligned}
e^{\frac{2}{h}e^{2\beta_0\kappa}}\|w\|^2_{H^1_\scl(\omega_2 \setminus \omega_3)}
&\lesssim
e^{\frac{2}{h}e^{\beta_0\kappa}}h^3\left(\|w\|^2_{H^3(\omega'\setminus \omega'')}+\|F\|^2_{L^2(\omega_0)}\right)
\\
&\quad +he^{\frac{2}{h}e^{\beta_0\|\psi\|_{L^\infty}}} \left(\|\p_\nu w\|^2_{H^{\frac{5}{2}}(\Gamma_0)}+ \|\p_\nu(\Delta w)\|^2_{H^{\frac{1}{2}}(\Gamma_0)}\right).
\end{aligned}
\end{equation}

We now set the  positive constants $\alpha_1$ and $\alpha_2$ by $\alpha_1= e^{2\beta_0\kappa}-e^{\beta_0\kappa} $ and $\alpha_2=e^{\beta_0\|\psi\|_{L^\infty}}-e^{2\beta_0\kappa} $, respectively. Then \eqref{eq:est_norms} reads
\[
\|w\|_{H^1_\scl(\omega_2 \setminus \omega_3)}
\lesssim e^{-\frac{\alpha_1}{h}}h^{\frac{3}{2}}\left(\|w\|_{H^3(\Omega)}+\|F\|_{L^2(\omega_0)}\right)
+
he^{\frac{\alpha_2}{h}}\left(\|\p_\nu w\|_{H^{\frac{5}{2}}(\Gamma_0)}
+ 
\|\p_\nu(\Delta w)\|_{H^{\frac{1}{2}}(\Gamma_0)}\right).
\]
From here, we obtain the claimed estimate \eqref{eq:unique_continuation} by passing to the non-semiclassical $H^1$-norm of $w$ 
via the inequality
\[
\|w\|_{H^1(\omega_2 \setminus \omega_3)}
\le
h^{-1}\|w\|_{H^1_\scl(\omega_2 \setminus \omega_3)}.	
\]
This completes the proof of Lemma \ref{lem:unique_continuation}. 
\end{proof}

\section{Proof of Theorem \ref{thm:estimate_A}}
\label{sec:proof_magnetic}

This section is devoted to verifying the estimate \eqref{eq:estimate_A}, thus proving Theorem \ref{thm:estimate_A}.   For  simplicity, let us denote $A=A_2-A_1$ and $q=q_2-q_1$ for the rest of this section. We extend the vector field $A$ and the function  $q$ by zero on $\R^n \setminus \Omega$, and  denote the extensions by the same letters. Since $A_1=A_2$ and $q_1=q_2$ near $\p \Omega$, we have $A\in W^{1,\infty}(\R^n, \C^n)$ and $q\in L^\infty(\R^n, \C)$. 

The proof of Theorem \ref{thm:estimate_A} is long and consists of three main steps, which we present in following subsections. 

\subsection{Integral estimate}
\label{subsec:integral_identity}
Our starting point to prove a  stability estimate  for the inverse problem under consideration is the derivation of the following integral estimate.  
\begin{prop}
\label{prop:integral_inequality}
Let $A_j\in W^{1,\infty}(\Omega, \C^n)$, $q_j\in L^\infty(\Omega, \C)$, $j=1,2$, and let  $\Lambda_{A_j, q_j}^{\gamma_1, \gamma_2}$ be the partial Dirichlet-to-Neumann map defined by \eqref{eq:def_DN_map} associated with the operator $\mathcal{L}_{A_j, q_j}$. Then the estimate 
\begin{equation}
\label{eq:integral_estimate}
\begin{aligned}
\left|\int_\Omega \left(A\cdot Du_2+qu_2\right)\overline{u_1}dx\right|
\lesssim
&\|u_1\|_{L^2(\Omega)} \left[e^{-\frac{\alpha_1}{3h}}\|u_2\|_{H^1(\Omega)}\right.
\\
&\left.+e^{\frac{\alpha_2}{3h}} \|u_2\|_{H^1(\Omega)}^{\frac{2}{3}} \|\Lambda_{A_1, q_1}^{\gamma_1, \gamma_2}-\Lambda_{A_2, q_2}^{\gamma_1, \gamma_2}\|^{\frac{1}{3}}  \left(\|u_2\|_{H^4(\Omega)}^{\frac{1}{3}}+\|\Delta u_2\|^{\frac{1}{3}}_{H^2(\Omega)}\right)\right]
\end{aligned}
\end{equation}
holds for any functions $u_1, u_2 \in H^4(\Omega)$ that satisfy the equations $\mathcal{L}_{A_1, q_1}^\ast u_1=0$ and $\mathcal{L}_{A_2, q_2}u_2=0$ in $\Omega$, where $\mathcal{L}_{A_1, q_1}^\ast$ is the formal $L^2$-adjoint of the operator $\mathcal{L}_{A_1, q_1}$. Here the constants $\alpha_1$ and $\alpha_2$ are the same as in Lemma \ref{lem:unique_continuation}.
\end{prop}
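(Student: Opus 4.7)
The plan is to derive the core integral identity through a difference-of-solutions construction, and then control the resulting boundary contribution by combining the operator norm of the partial DN-map difference with the quantitative unique continuation from Lemma \ref{lem:unique_continuation}. Since $0$ is not a Navier-eigenvalue of $\mathcal{L}_{A_1,q_1}$, I will let $\tilde u_2\in H^4(\Omega)$ be the unique solution of $\mathcal{L}_{A_1,q_1}\tilde u_2=0$ in $\Omega$ with $(\tilde u_2|_{\partial\Omega},\Delta\tilde u_2|_{\partial\Omega})=(u_2|_{\partial\Omega},\Delta u_2|_{\partial\Omega})$, and set $w:=u_2-\tilde u_2$. Then $\mathcal{L}_{A_1,q_1} w=-(A\cdot Du_2+qu_2)=:F$ in $\Omega$ with $w|_{\partial\Omega}=\Delta w|_{\partial\Omega}=0$, and $F$ is supported in $\Omega\setminus\omega_0$ since $A$ and $q$ vanish on $\omega_0$; elliptic regularity for the Navier BVP yields $\|w\|_{H^4(\Omega)}\lesssim\|F\|_{L^2(\Omega)}\lesssim\|u_2\|_{H^1(\Omega)}$. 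Pairing against $\overline{u_1}$ and using $\mathcal{L}_{A_1,q_1}^\ast u_1=0$ together with the homogeneous boundary conditions on $w$ (which also eliminate the $A_1\cdot D$-boundary contribution) produces
\begin{equation*}
\int_\Omega(A\cdot Du_2+qu_2)\overline{u_1}\,dx=-\int_{\partial\Omega}\bigl[\partial_\nu(\Delta w)\overline{u_1}+\partial_\nu w\,\overline{\Delta u_1}\bigr]\,dS.
\end{equation*}

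To avoid boundary contributions on $\partial\Omega\setminus\gamma_2$ (where the DN map supplies no information), I will relocate the integration by parts to the interior subdomain $\Omega\setminus\overline{\omega_3}$. Since $F\overline{u_1}$ is supported in $\Omega\setminus\omega_0\subset\Omega\setminus\overline{\omega_3}$, the left-hand side is unchanged, and the boundary integral migrates to the interior surface $\Sigma=\partial\omega_3\cap\Omega\subset\omega_2\setminus\omega_3$. The trace theorem combined with interior elliptic regularity---for $u_1$ via $\mathcal{L}_{A_1,q_1}^\ast u_1=0$, yielding $\|u_1\|_{H^s(\omega_2\setminus\omega_3)}\lesssim\|u_1\|_{L^2(\Omega)}$, and for $w$ via $F\equiv 0$ on $\omega_0$, yielding $\|w\|_{H^4(\omega_2\setminus\omega_3)}\lesssim\|w\|_{H^1(\omega_1\setminus\omega_4)}$---bounds the $\Sigma$-integral by $\|u_1\|_{L^2(\Omega)}\|w\|_{H^1(\omega_1\setminus\omega_4)}$. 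Applying Lemma \ref{lem:unique_continuation} to $w$ with $\Gamma_0=\gamma_2$ (the $\|F\|_{L^2(\omega_0)}$-term vanishes since $F|_{\omega_0}=0$) gives
\begin{equation*}
\|w\|_{H^1(\omega_1\setminus\omega_4)}\lesssim e^{-\alpha_1/h}\|u_2\|_{H^1(\Omega)}+e^{\alpha_2/h}\bigl(\|\partial_\nu w\|_{H^{5/2}(\gamma_2)}+\|\partial_\nu(\Delta w)\|_{H^{1/2}(\gamma_2)}\bigr),
\end{equation*}
and the $\gamma_2$-traces of $w$ are in turn dominated by $\|\Lambda_{A_1,q_1}^{\gamma_1,\gamma_2}-\Lambda_{A_2,q_2}^{\gamma_1,\gamma_2}\|\bigl(\|u_2\|_{H^4(\Omega)}+\|\Delta u_2\|_{H^2(\Omega)}\bigr)$ by identifying them as the DN-map difference applied to the common Dirichlet-Navier data of $u_2$ and $\tilde u_2$ (reduced to a $\gamma_1$-supported input via the coefficient agreement on $\omega_0$), together with the trace bounds $\|u_2|_{\partial\Omega}\|_{H^{7/2}}\lesssim\|u_2\|_{H^4(\Omega)}$ and $\|\Delta u_2|_{\partial\Omega}\|_{H^{3/2}}\lesssim\|\Delta u_2\|_{H^2(\Omega)}$.

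Assembling the previous steps yields the intermediate bound
\begin{equation*}
\Bigl|\int_\Omega(A\cdot Du_2+qu_2)\overline{u_1}\,dx\Bigr|\lesssim\|u_1\|_{L^2(\Omega)}\bigl[e^{-\alpha_1/h}\|u_2\|_{H^1(\Omega)}+e^{\alpha_2/h}\|\Lambda_1-\Lambda_2\|\bigl(\|u_2\|_{H^4}+\|\Delta u_2\|_{H^2}\bigr)\bigr],
\end{equation*}
call it $(\star)$. Combining $(\star)$ with the trivial estimate $|I|\lesssim\|u_1\|_{L^2(\Omega)}\|u_2\|_{H^1(\Omega)}$ (coming from $A\in W^{1,\infty}$, $q\in L^\infty$) via the geometric-mean interpolation $|I|\le(\star)^{1/3}(\text{trivial})^{2/3}$, and invoking the subadditivity $(a+b)^{1/3}\le a^{1/3}+b^{1/3}$ inside $(\star)$, recovers precisely the stated $\tfrac{1}{3}$-power estimate \eqref{eq:integral_estimate}: the exponentials redistribute as $e^{\pm\alpha_j/(3h)}$, the factors $\|\Lambda_1-\Lambda_2\|$ and $\|u_2\|_{H^4}+\|\Delta u_2\|_{H^2}$ become cube roots, and the residue $\|u_2\|_{H^1}^{2/3}$ appears as displayed. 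The principal technical difficulty lies in identifying the $\gamma_2$-Neumann data of $w$ with the partial DN-map difference, since the Dirichlet-Navier data of the CGO $u_2$ is not \textit{a priori} supported in $\gamma_1$; handling the $(\partial\Omega\setminus\gamma_1)$-portion relies critically on the coefficient agreement on $\omega_0$ to cut it off with controllable residual.
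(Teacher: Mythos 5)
Your proof is correct in its core structure and reaches the same estimate, but it is a genuinely different route from the paper's. Two differences are worth highlighting.

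First, the paper does not integrate by parts over the subdomain $\Omega\setminus\overline{\omega_3}$. Instead it multiplies the function $u=v-u_2$ by a cut-off $\chi$ vanishing near $\partial\Omega$, applies Green's formula over all of $\Omega$ to $\tilde u=\chi u$ (so that \emph{every} boundary term vanishes identically), and is left with the commutator term $\int_\Omega\overline{u_1}P(x,D)u\,dx$ with $P(x,D)=[\Delta^2,\chi]+A_1\cdot[D,\chi]$ supported in the shell. This avoids any trace analysis on the interior surface $\Sigma=\partial\omega_3\cap\Omega$. Your subdomain argument is valid in principle, but your claimed bound of the $\Sigma$-integral by $\|u_1\|_{L^2(\Omega)}\|w\|_{H^1(\omega_1\setminus\omega_4)}$ needs to be spelled out: the integrand on $\Sigma$ involves third-order normal derivatives of both $w$ and $u_1$, so one has to invoke interior elliptic regularity for both of them, namely $\|u_1\|_{H^4(\omega_2\setminus\omega_3)}\lesssim\|u_1\|_{L^2(\Omega)}$ from $\mathcal{L}_{A_1,q_1}^\ast u_1=0$ (a two-step bootstrap, and the attainable exponent is $s=4$ given only $A_1\in W^{1,\infty}$, not arbitrary $s$), and $\|w\|_{H^4(\omega_2\setminus\omega_3)}\lesssim\|w\|_{H^1(\omega_1\setminus\omega_4)}$ from $\mathcal{L}_{A_1,q_1}w=0$ on $\omega_0$, together with duality pairings $H^{\pm 1/2}(\Sigma)$. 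The paper's commutator formulation sidesteps all of this at the cost of one elementary Cauchy--Schwarz.

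Second, the $\tfrac13$ power enters differently. The paper interpolates $\|u\|_{H^3(\omega_2\setminus\omega_3)}\le\|u\|_{H^1}^{1/3}\|u\|_{H^4}^{2/3}$ \emph{inside} the shell and only then feeds the $H^1$-factor into Lemma~\ref{lem:unique_continuation}. You instead obtain an un-interpolated bound $(\star)$ directly from unique continuation, and then take the geometric mean $|I|\le(\star)^{1/3}(\text{trivial})^{2/3}$ with the elementary bound $|I|\lesssim\|u_1\|_{L^2}\|u_2\|_{H^1}$; with the subadditivity $(a+b)^{1/3}\le a^{1/3}+b^{1/3}$ this reproduces exactly the stated inequality. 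The two bookkeepings are arithmetically equivalent, but the paper's is arguably more transparent about where the loss occurs.

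One caveat you correctly identified but left unresolved: both your argument and the paper's estimate \eqref{eq:est_boundary_norm} apply the operator norm $\|\Lambda_{A_1,q_1}^{\gamma_1,\gamma_2}-\Lambda_{A_2,q_2}^{\gamma_1,\gamma_2}\|$ to the Navier data $(f,g)$ of the CGO $u_2$, which is not \emph{a priori} supported in $\gamma_1$. This is a genuine point that needs the coefficient agreement on $\omega_0$, and the paper itself offloads the justification to \cite{Choudhury_Heck,Liu_2024_biharmonic}; you flag the issue but should not leave it at the level of a remark if the goal is a complete proof. Similarly, the Proposition's hypotheses as stated do not literally assume $A_1=A_2$ and $q_1=q_2$ on $\omega_0$, an assumption both you and the paper use silently; this is a minor omission shared by the original statement.
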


\begin{proof}
We shall begin the proof by recalling the Green's formula for biharmonic operators, which holds for any functions $u,v\in H^4(\Omega)$, see \cite[Section 3]{Yang}:
\begin{equation}
\label{eq:Green_id}
\begin{aligned}
\int_\Omega\mathcal{L}_{A,q}u\overline{v}dx-\int_{\Omega}u\overline{\mathcal{L}_{A,q}^\ast v}dx = 
&-i\int_{\p \Omega}\nu \cdot uA\overline{v}dS
+
\int_{\p \Omega}\p_\nu (\Delta u)\overline{v}dS
-
\int_{\p \Omega}\Delta u\overline{\p_\nu v}dS
\\
&+\int_{\p \Omega}\p_\nu u\overline{\Delta v}dS
-
\int_{\p \Omega}u\overline{\p_\nu(\Delta v)}dS.
\end{aligned}
\end{equation}
Here $\nu$ is the outward unit normal to the boundary $\p \Omega$.

Let $(f,g)\in H^{\frac{7}{2}, \frac{3}{2}}(\p \Omega)$ be a pair of functions, and let the function $u_2\in H^4(\Omega)$ be a solution to the boundary value problem  
\begin{equation}
\label{eq:bvp_u2}
\begin{cases}
\mathcal{L}_{A_2,q_2}u_2 = 0 \quad \text{in} \quad \Omega, 
\\
u_2=f \quad \text{on} \quad \partial \Omega, 
\\
\Delta u_2=g \quad \text{on} \quad \partial \Omega.
\end{cases}
\end{equation}
On the other hand, let the function $v\in H^4(\Omega)$ solve the boundary value problem
\[
\begin{cases}
	\mathcal{L}_{A_1,q_1}v = 0 \quad \text{in} \quad \Omega, 
	\\
	v=f \quad \text{on} \quad \partial \Omega, 
	\\
	\Delta v=g \quad \text{on} \quad \partial \Omega.
\end{cases}
\]    
Then it follows immediately that the function $u :=v-u_2$ satisfies  
\begin{equation}
\label{eq:difference}
\begin{cases}
\mathcal{L}_{A_1, q_1}u=A \cdot Du_2+ qu_2 \quad  \text{in} \quad  \Omega,
\\
u=0 \quad \text{on} \quad  \p \Omega,
\\
\Delta u=0 \quad \text{on} \quad  \p \Omega.
\end{cases}
\end{equation}

Let $\omega_0 \subset \Omega$ be a neighborhood of $\p \Omega$ with smooth boundary $\p \omega_0$ such that $(A_1, q_1)=(A_2, q_2)$ in $\omega_0$. Let $\omega_j \subset \Omega$, $j=1,2,3$, be neighborhoods of $\p \Omega$ such that $\p \Omega \subset \p \omega_j$ and $\overline{\omega_j} \subset \omega_{j-1}$. Furthermore, let $\chi \in C_0^\infty(\Omega)$ be a cut-off function such that $0\le \chi \le 1$ and 
\[
\chi(x) = 
\begin{cases}
1, \quad x\in \Omega \setminus \omega_2,
\\
0, \quad x\in \omega_3.
\end{cases}
\]
Setting the function $\tilde u = \chi u$, we conclude through direct computations that $\tilde u$ is a solution to the equation
\[
\mathcal{L}_{A_1,q_1} \tilde u = \chi(A \cdot Du_2+ qu_2)+P(x,D)u \quad \text{in} \quad \Omega,
\]
where $P(x,D)$ is a third order differential operator supported in $\omega_2\setminus \omega_3$ defined by the formula
\[
P(x,D)= [\Delta^2,\chi] + A_1\cdot [D,\chi]. 
\]
Let us  observe that $A=0$ and $q=0$ in $\omega_0$, hence we have $\chi A=A$ and $\chi q=q$ in $\Omega$. This yields the equation
\[
\mathcal{L}_{A_1,q_1} \tilde u = A \cdot Du_2+ qu_2+P(x,D)u \quad \text{in} \quad \Omega.
\]

Let $u_1 \in H^4(\Omega)$ be a solution of the equation $\mathcal{L}_{A_1, q_1}^\ast u_1=0$ in $\Omega$. Since $\tilde u=\Delta \tilde u=0$ in $\omega_3$, we have $\p_\nu \tilde u|_{\p \Omega}=\p_\nu (\Delta \tilde u)|_{\p \Omega}=0$. Thus, by applying the Green's identity \eqref{eq:Green_id} to the functions $\tilde u, u_1\in H^4(\Omega)$, we get
\begin{equation}
\label{eq:int_identity}
\int_\Omega (A\cdot Du_2+qu_2 )\overline{u_1} dx
=
\int_{\Omega} -\overline{u_1}  P(x,D)u dx.
\end{equation}

We next estimate the integral on the right-hand side of \eqref{eq:int_identity}. As $P(x,D)$ is a third order operator with $\supp P(x,D)\subset \omega_2 \setminus \omega_3$, we apply the Cauchy-Schwarz inequality to obtain the estimate
\begin{equation}
\label{eq:est_aux_term}
\left|\int_\Omega \overline{u_1}  P(x,D)u dx\right| 
\lesssim
\|u\|_{H^3(\omega_2 \setminus \omega_3)} \|u_1\|_{L^2(\Omega)}.
\end{equation}
By the interpolation inequality and Lemma \ref{lem:unique_continuation}, we deduce that
\begin{equation}
\label{eq:interpolation}
\begin{aligned}
\|u\|_{H^3(\omega_2 \setminus \omega_3)} 
\le 
&\|u\|^{\frac{1}{3}}_{H^1(\omega_2 \setminus \omega_3)} \|u\|^{\frac{2}{3}}_{H^4(\omega_2 \setminus \omega_3)}
\\
\le 
&\|u\|^{\frac{1}{3}}_{H^1(\omega_2 \setminus \omega_3)} \|u\|^{\frac{2}{3}}_{H^4(\Omega)}
\\
\lesssim
& \|u\|^{\frac{2}{3}}_{H^4(\Omega)} \left[e^{-\frac{\alpha_1}{3h}}\left(\|u\|^{\frac{1}{3}}_{H^3(\Omega)}+\|A\cdot Du_2+qu_2\|^{\frac{1}{3}}_{L^2(\omega_0)}\right)\right.
\\
&\left. +e^{\frac{\alpha_2}{3h}}\left(\|\p_\nu u\|_{H^{\frac{5}{2}}(\gamma_2)}^{\frac{1}{3}}+ \|\p_\nu(\Delta u)\|_{H^{\frac{1}{2}}(\gamma_2)}^{\frac{1}{3}}\right)\right].
\end{aligned}
\end{equation}

Let us now analyze the terms on the right-hand side of \eqref{eq:interpolation}. Since  0 is not an eigenvalue of  $\mathcal{L}_{A_1, q_1}$, an application of the elliptic regularity \cite[Corollary 2.21]{Gazzola_Grunau_Sweers} indicates that any solution $u\in H^4(\Omega)$ to the boundary value problem \eqref{eq:difference} satisfies the estimate
\begin{equation}
\label{eq:est_H4_term}
\begin{aligned}
\|u\|_{H^4(\Omega)}
&\lesssim
\|A\cdot Du_2+qu_2\|_{L^2(\Omega)}
\\
&\lesssim
\left(\|A\|_{L^\infty(\Omega)}\|Du_2\|_{L^2(\Omega)} + \|q\|_{L^\infty(\Omega)}\|u_2\|_{L^2(\Omega)}\right)
\\
&\lesssim
\|u_2\|_{H^1(\Omega)}.
\end{aligned}
\end{equation}
Furthermore, the inequalities
\begin{equation}
\label{eq:est_H3_term}
\|u\|_{H^3(\Omega)}
\lesssim 
\|u_2\|_{H^1(\Omega)}
\end{equation}
and
\begin{equation}
\label{eq:est_RHS_term}
\|A\cdot Du_2+qu_2\|_{L^2(\omega_0)} 
\le 
\|A\cdot Du_2+qu_2\|_{L^2(\Omega)} 
\lesssim 
\|u_2\|_{H^1(\Omega)}
\end{equation}
follow immediately from  \eqref{eq:est_H4_term}.

Turning attention to the boundary terms in the estimate \eqref{eq:interpolation}, we follow similar computations as in \cite{Choudhury_Heck,Liu_2024_biharmonic} to get that
\begin{equation}
\label{eq:est_boundary_norm}
\begin{aligned}
\|\p_\nu u\|_{H^{\frac{5}{2}}(\gamma_2)}+ \|\p_\nu(\Delta u)\|_{H^{\frac{1}{2}}(\gamma_2)}
&\lesssim 
\|(\Lambda_{A_1, q_1}^{\gamma_1, \gamma_2}-\Lambda_{A_2, q_2}^{\gamma_1, \gamma_2})(f,g)\|_{H^{\frac{5}{2}, \frac{1}{2}}(\Gamma)}
\\
&\lesssim 
\|\Lambda_{A_1, q_1}^{\gamma_1, \gamma_2}-\Lambda_{A_2, q_2}^{\gamma_1, \gamma_2}\| \|(f,g)\|_{H^{\frac{7}{2}, \frac{3}{2}}(\Gamma)}
\\
&\lesssim 
\|\Lambda_{A_1, q_1}^{\gamma_1, \gamma_2}-\Lambda_{A_2, q_2}^{\gamma_1, \gamma_2}\| (\|u_2\|_{H^4(\Omega)}+\|\Delta u_2\|_{H^2(\Omega)}).
\end{aligned}
\end{equation}
Therefore, the estimate \eqref{eq:integral_estimate} follows immediately from the estimates \eqref{eq:est_aux_term}--\eqref{eq:est_boundary_norm}. This completes the proof of Proposition \ref{prop:integral_inequality}.
\end{proof}

\subsection{Estimating the exterior derivative  of the vector field}
\label{subsec:est_dA}

In this subsection we derive an estimate for $\|\mathrm{d}A\|_{L^\infty(\Omega)}$, where $\mathrm{d}A$ is the exterior derivative of the vector field $A$ given by \eqref{eq:def_mag_field}. To achieve this goal, we  first construct CGO solutions $u_1, u_2$ satisfying the equations $\mathcal{L}_{A_1, q_1}^\ast u_1=0$ and $\mathcal{L}_{A_1, q_1} u_2=0$ in $\Omega$, respectively, followed by substituting them into the integral estimate \eqref{eq:integral_estimate}. 

To construct CGO solutions, let $\mu^{(1)}, \mu^{(2)}, \xi$ be vectors in $\R^n$ such that
\[
\mu^{(1)}\cdot\mu^{(2)}=\mu^{(1)}\cdot \xi=\mu^{(2)}\cdot \xi=0, \quad |\mu^{(1)}|=|\mu^{(2)}|=1,
\]
and let us set
\begin{equation}
\label{eq:form_zeta}
\begin{aligned}
\zeta_1=\frac{\tau\xi}{2}+\sqrt{1-\tau^2\frac{|\xi|^2}{4}}\mu^{(1)}+i \mu^{(2)}, \quad
\zeta_2=-\frac{\tau\xi}{2}+\sqrt{1-\tau^2\frac{|\xi|^2}{4}} \mu^{(1)}-i \mu^{(2)},
\end{aligned}
\end{equation}
where $0<\tau\ll 1$ is a semiclassical parameter such that $1-\tau^2 \frac{|\xi|^2}{4} \ge 0$. Then direct computations show that $\zeta_j\cdot\zeta_j=0$, $j=1,2$, and $\frac{\zeta_2-\overline{\zeta_1}}{\tau}=-\xi$. Moreover, we observe that $\zeta_1 = \mu^{(1)} + i\mu^{(2)} +\mathcal{O}(\tau)$ and $\zeta_2 = \mu^{(1)} - i\mu^{(2)} + \mathcal{O}(\tau)$ as $\tau \to 0$.  From the definition of $\zeta^{(0)}$ given in Proposition \ref{prop:CGO_solutions}, we have that $\zeta_1^{(0)}= \mu^{(1)} + i\mu^{(2)}$ and $\zeta_2^{(0)}= \mu^{(1)} - i\mu^{(2)}$.

An application of Proposition \ref{prop:CGO_solutions} yields that there exists a solution $u_1\in H^4(\Omega)$ to the equation $\mathcal{L}_{A_1, q_1}^\ast u_1=0$ in $\Omega$ of the form
\begin{equation}
\label{eq:v_form}
u_1(x, \zeta_1;\tau )
=
e^{\frac{ix\cdot \zeta_1}{\tau}} \left(a_1\left(x,  \mu^{(1)} + i\mu^{(2)}\right)+r_1(x, \zeta_1;\tau)\right).
\end{equation}
Also, the equation $\mathcal{L}_{A_2, q_2} u_2=0$ in $\Omega$ admits a solution $u_2\in H^4(\Omega)$ given by
\begin{equation}
\label{eq:u2_form}
u_2(x, \zeta_2; \tau)
=
e^{\frac{ix\cdot \zeta_2}{\tau}} \left(a_2 \left(x, \mu^{(1)}-i\mu^{(2)}\right)+r_2(x, \zeta_2; \tau) \right).
\end{equation}
Here, for $j=1,2$, the amplitude $a_j\in C^\infty(\overline{\Omega})$ solves the transport equation
\begin{equation}
\label{eq:trans_eq_u1_u2}
((\mu^{(1)} + i\mu^{(2)}) \cdot \nabla)^2a_j(x, \mu^{(1)} + i\mu^{(2)}) = 0,
\end{equation} 
and the remainder term $r_j\in H^4(\Omega)$ satisfies the estimate 
\begin{equation}
\label{eq:est_r_domain}
\|r_j\|_{H^1_{\scl}(\Omega)}=\cO(\tau), \quad \tau\to 0.
\end{equation}

We next substitute the CGO solutions \eqref{eq:v_form} and \eqref{eq:u2_form} into the integral estimate \eqref{eq:integral_estimate}, multiply both sides of the resulted inequality by $\tau$, and analyze the limits as $\tau \to 0$. For the first term on the left-hand side of \eqref{eq:integral_estimate}, it follows from direct computations that
\[
Du_2 = \frac{\zeta_2}{\tau}e^\frac{ix\cdot \zeta_2}{\tau}(a_2+r_2) + e^\frac{ix\cdot \zeta_2}{\tau}(Da_2+Dr_2).
\]
Thus, we obtain that 
\begin{equation}
\label{eq:int_id_substitute_A}
\begin{aligned}
\tau\int_\Omega (A\cdot  Du_2) \overline{u_1}dx
&= \int_\Omega A\cdot \zeta_2 e^{-ix\cdot \xi}\overline{a_1}a_2dx 
+\int_\Omega A\cdot w_1 dx
\\
&=: I_1+I_2,
\end{aligned}
\end{equation}
where 
\begin{align*}
w_1 &= \zeta_2 e^{-ix\cdot \xi}(\overline{a_1}r_2+a_2\overline{r_1}+\overline{r_1}r_2)
+\tau e^{-ix\cdot \xi}(\overline{a_1}Da_2+\overline{a_1}Dr_2+\overline{r_1}Da_2+\overline{r_1}Dr_2).
\end{align*}

Let us now investigate the limit of each integral in \eqref{eq:int_id_substitute_A} as $\tau \to 0$. For $I_1$, we recall the definition of $\zeta_2$ in \eqref{eq:form_zeta} to deduce that
\begin{equation}
\label{eq:I1_sum}
I_1
\to 
(\mu^{(1)}-i\mu^{(2)}+\cO(\tau))\cdot \int_{\Omega}  A e^{-ix\cdot \xi}\overline{a_1}a_2dx, \quad \tau \to 0.
\end{equation}
By replacing the vector $\mu^{(2)}$ above by $-\mu^{(2)}$, we have
\begin{equation}
\label{eq:I1_difference}
I_1\to (\mu^{(1)}+i\mu^{(2)}+\cO(\tau))\cdot \int_{\Omega}  A e^{-ix\cdot \xi}\overline{a_1}a_2dx, \quad \tau \to 0.
\end{equation}
Hence, in view of \eqref{eq:I1_sum} and \eqref{eq:I1_difference}, we see that
\[
I_1\to (\mu+\cO(\tau)) \cdot \int_{\Omega} A e^{-ix\cdot \xi}\overline{a_1}a_2dx, \quad \tau \to 0,
\]
whenever $\mu,\xi \in \R^n$ are  such that $\mu\cdot \xi=0$.  By choosing $a_1=a_2=1$, which clearly satisfy the   transport equation   \eqref{eq:trans_eq_u1_u2}, we get that
\[
I_1 \to (\mu+\cO(\tau)) \cdot \int_{\Omega} A e^{-ix\cdot \xi}dx, \quad \tau \to 0.
\]

To deduce an estimate for the integral $I_1$, it is straightforward to see that
\[
\left|\cO(\tau)\cdot \int_\Omega Ae^{ix\cdot \xi}dx\right|
\lesssim
\tau.
\]
On the other hand, following \cite{Kru_Uhl_14}, by choosing $\mu_{jk}(\xi)=\xi_je_k-\xi_ke_j$ for $j\ne k$, where $\{e_1,\dots, e_n\}$ is the standard basis of $\R^n$, we see that $\mu_{jk}(\xi)\cdot \xi=0$, and
\begin{align*}
\mu_{jk}(\xi) \cdot \int_\Omega Ae^{ix \cdot \xi}dx 
&= 
\xi_j \mathcal{F}(A_k)-\xi_k\mathcal{F}(A_j)
= 
\mathcal{F}(\p_{x_j}A_k-\p_{x_k}A_j).
\end{align*}
Summing it over all $1\le j<k\le n$, we conclude that
\begin{equation}
\label{eq:est_I1}
|I_1|
\lesssim
|\mathcal{F}(\mathrm{d}A)(\xi)|+\tau,
\end{equation}
where $\mathrm{d}A$ is a two-form defined  by formula \eqref{eq:def_mag_field}.

Turning attention to the integral $I_2$, we first observe that the estimate \eqref{eq:est_r_domain} implies the estimate
\begin{equation}
\label{eq:est_remainder_derivative}
\|Dr_j\|_{L^2(\Omega)}=\cO(1), \: j=1,2, \quad \tau \to 0.
\end{equation}
We then utilize our choices of amplitudes $a_1=a_2=1$, the estimates \eqref{eq:est_r_domain} and \eqref{eq:est_remainder_derivative}, in conjunction with the Cauchy-Schwarz inequality, to get that
\begin{equation}
\label{eq:est_I2}
|I_2| \lesssim \tau, \quad \tau \to 0.
\end{equation}

We next estimate the second term on the left-hand side of the estimate \eqref{eq:integral_estimate}. Since $a_j\in C^\infty(\overline{\Omega})$, $j=1,2$, we apply the estimate  \eqref{eq:est_r_domain} and the Cauchy-Schwarz inequality to conclude that
\begin{equation}
\label{eq:est_qterm}
\left|\tau \int_\Omega q \overline{u_1} u_2 dx\right| \lesssim \tau, \quad \tau \to 0.
\end{equation}

To analyze the terms on the right-hand side of the estimate \eqref{eq:integral_estimate}, we   need to estimate the $H^s$-norms of the functions $u_1$, $u_2$, as well as their derivatives. To this end, by similar computations as in \cite[Section 4]{Choudhury_Krishnan}, we have the following estimates
\begin{equation}
\label{eq:est_solutions}
\|u_2\|_{H^1(\Omega)}
\lesssim
\frac{1}{\tau}e^{\frac{2R}{\tau}}, 
\quad 
\|u_1\|_{L^2(\Omega)} 
\lesssim
e^{\frac{2R}{\tau}}, 
\quad 
\|u_2\|_{H^4(\Omega)}
\lesssim
\frac{1}{\tau^4}e^{\frac{2R}{\tau}}, 
\quad 
\|\Delta u_2\|_{H^2(\Omega)}
\lesssim 
\frac{1}{\tau}e^{\frac{2R}{\tau}}.
\end{equation}
Therefore, by utilizing estimates  \eqref{eq:est_I1}, together with  \eqref{eq:est_I2}--\eqref{eq:est_solutions}, we deduce from the integral estimate \eqref{eq:integral_estimate} that
\begin{equation}
\label{eq:est_Fourier_dA}
\begin{aligned}
|\mathcal{F}(\mathrm{d}A)(\xi)|
&\lesssim
\tau + e^{\frac{2R}{\tau}} \left[e^{-\frac{\alpha_1}{3h}} e^{\frac{2R}{\tau}}
+e^{\frac{\alpha_2}{3h}} \|\Lambda_{A_1, q_1}^{\gamma_1, \gamma_2}-\Lambda_{A_2, q_2}^{\gamma_1, \gamma_2}\|^{\frac{1}{3}}  \frac{1}{\tau^{2/3}}e^{\frac{4R}{3\tau}} \left( \frac{1}{\tau^{4/3}}e^{\frac{2R}{3\tau}}+ \frac{1}{\tau^{1/3}}e^{\frac{2R}{3\tau}}\right)\right]
\\
&\lesssim 
\tau
+
e^{\frac{4R}{\tau}-\frac{\alpha_1}{3h}}
+e^{\frac{6R}{\tau}+\frac{\alpha_2}{3h}} \|\Lambda_{A_1, q_1}^{\gamma_1, \gamma_2}-\Lambda_{A_2, q_2}^{\gamma_1, \gamma_2}\|^{\frac{1}{3}},
\quad  \tau \to 0,
\end{aligned}
\end{equation}
where we have utilized the fact that $0<\tau \ll 1$ and the inequality $\frac{1}{\tau} \le e^{\frac{2R}{\tau}}$ in the last step.

Let us now connect the two semiclassical parameters $h$ and $\tau$ by setting $\tau=\lambda h$, where $\lambda>0$ is a constant. By choosing $\lambda$ sufficiently large, we see that there exist positive constants $\alpha_3$ and $\alpha_4$ such that
\begin{equation}
\label{eq:exponential_tau_h}
e^{\frac{4R}{\tau}-\frac{\alpha_1}{3h}}=e^{\frac{1}{h}\left(\frac{4R}{\lambda}-\frac{\alpha_1}{3}\right)} \le e^{-\frac{\alpha_3}{h}},
\quad
e^{\frac{6R}{\tau}+\frac{\alpha_2}{3h}}=e^{\frac{1}{h}\left(\frac{6R}{\lambda}+\frac{\alpha_2}{3}\right)} \le e^{\frac{\alpha_4}{h}}.
\end{equation}
Hence, the estimate \eqref{eq:est_Fourier_dA} becomes
\begin{equation}
\label{eq:est_Fourier_dA_new}
|\mathcal{F}(\mathrm{d}A)(\xi)|
\lesssim
h
+
e^{-\frac{\alpha_3}{h}}
+
e^{\frac{\alpha_4}{h}}\|\Lambda_{A_1, q_1}^{\gamma_1, \gamma_2}-\Lambda_{A_2, q_2}^{\gamma_1, \gamma_2}\|^{\frac{1}{3}},
\quad h\to 0.
\end{equation}

We are now ready to derive an estimate for  $\|\mathrm{d}A\|_{H^{-1}(\Omega)}$. Let $\rho>0$ be a parameter to be specified later, then an application of the Parseval's formula gives us
\begin{equation}
\label{eq:H-1norm_split}
\|\mathrm{d}A\|_{H^{-1}(\R^n)}^2
\le 
\int_{|\xi|\le \rho} \frac{|\cF(\mathrm{d}A)(\xi)|^2}{1+|\xi|^2}d\xi 
+
\int_{|\xi|\ge \rho} \frac{|\cF(\mathrm{d}A)(\xi)|^2}{1+|\xi|^2}d\xi.
\end{equation}

Let us estimate the terms on the right-hand side of \eqref{eq:H-1norm_split}. For the first term, it follows from \eqref{eq:est_Fourier_dA_new} that
\begin{equation}
	\label{eq:est_dA_small_rho}
	\begin{aligned}
		\int_{|\xi|\le \rho} \frac{|\cF(\mathrm{d}A)(\xi)|^2}{1+|\xi|^2}d\xi 
		&\lesssim 
		\left(e^{\frac{2\alpha_4}{h}}\|\Lambda_{A_1, q_1}^{\gamma_1, \gamma_2}-\Lambda_{A_2, q_2}^{\gamma_1, \gamma_2}\|^{\frac{2}{3}}
		+
		h^2
		+
		e^{-\frac{2\alpha_3}{h}}\right) \int_{|\xi|\le \rho} \frac{1}{1+|\xi|^2}d\xi 
		\\
		&\lesssim
		\rho^n \left(e^{\frac{2\alpha_4}{h}}\|\Lambda_{A_1, q_1}^{\gamma_1, \gamma_2}-\Lambda_{A_2, q_2}^{\gamma_1, \gamma_2}\|^{\frac{2}{3}}
		+
		h^2
		+
		e^{-\frac{2\alpha_3}{h}}\right)
		\\
		&\lesssim
		\rho^n \left(e^{\frac{2\alpha_4}{h}}\|\Lambda_{A_1, q_1}^{\gamma_1, \gamma_2}-\Lambda_{A_2, q_2}^{\gamma_1, \gamma_2}\|^{\frac{2}{3}}
		+
		h^2
		+
		h\right)
		\\
		&\lesssim
		\rho^n \left(e^{\frac{2\alpha_4}{h}}\|\Lambda_{A_1, q_1}^{\gamma_1, \gamma_2}-\Lambda_{A_2, q_2}^{\gamma_1, \gamma_2}\|^{\frac{2}{3}}
		+
		h\right),
	\end{aligned}
\end{equation}
where we have applied the inequalities $e^{-\frac{2\alpha_3}{h}}\lesssim h$ and $h^2<h$ for $h$ small in the last step.

To estimate the second term, we get from the Plancherel theorem that 
\begin{equation}
\label{eq:est_dA_large_rho}
\begin{aligned}
\int_{|\xi|\ge \rho} \frac{|\cF(\mathrm{d}A)(\xi)|^2}{1+|\xi|^2}d\xi 
&\lesssim
\int_{|\xi|\ge \rho} \frac{|\cF(\mathrm{d}A)(\xi)|^2}{1+\rho^2}d\xi
\\
&\lesssim
\frac{1}{\rho^2}\|\mathrm{d}A\|_{L^2(\R^n)}
\\
&\lesssim 
\frac{1}{\rho^2}.
\end{aligned}
\end{equation}
Hence, we get from the previous two inequalities that
\[
\|\mathrm{d}A\|_{H^{-1}(\R^n)}^2
\lesssim 
\rho^n e^{\frac{2\alpha_4}{h}} \|\Lambda_{A_1, q_1}^{\gamma_1, \gamma_2}-\Lambda_{A_2, q_2}^{\gamma_1, \gamma_2}\|^{\frac{2}{3}}
+
\rho^nh
+
\frac{1}{\rho^2}.
\]

We now choose $\rho = h^{-\frac{1}{n+2}}$, which equates $\rho^n h$ and $\frac{1}{\rho^2}$. Then there exists a constant $\alpha_5>\alpha_4>0$ such that $\rho^n e^{\frac{2\alpha_4}{h}}\le e^{\frac{2\alpha_5}{h}}$. Therefore, the inequality above reads
\begin{equation}
\label{eq:est_dA}
\|\mathrm{d}A\|_{H^{-1}(\R^n)}
\lesssim 
e^{\frac{\alpha_5}{h}} \|\Lambda_{A_1, q_1}^{\gamma_1, \gamma_2}-\Lambda_{A_2, q_2}^{\gamma_1, \gamma_2}\|^{\frac{1}{3}}
+
h^{\frac{1}{n+2}}.
\end{equation}

Finally, to obtain an estimate for $\|\mathrm{d}A\|_{L^\infty(\Omega)}$, let us recall that we have assumed an \textit{a priori} bound for $\|A\|_{H^s(\Omega)}$, with $s>\frac{n}{2}+1$, in the definition of the admissible set $\mathcal{A}(M, s, A_0, \omega_0)$. Thus, there exists a constant $\eta>0$ such that $s=\frac{n}{2}+2\eta$. Applying the Sobolev embedding theorem and the interpolation theorem, we deduce from the estimate \eqref{eq:est_dA} that 
\begin{equation}
\label{eq:est_dA_Linfty}
\begin{aligned}
\|\mathrm{d}A\|_{L^\infty(\R^n)}
&\lesssim 
\|\mathrm{d}A\|_{H^{\frac{n}{2}+ \eta}(\R^n)}
\\
&\lesssim  \|\mathrm{d}A\|_{H^{-1}(\R^n)}^{\frac{\eta}{1+s}} \|\mathrm{d}A\|_{H^s(\R^n)}^{\frac{1+s-\eta}{1+s}}
\\
&\lesssim  \|\mathrm{d}A\|_{H^{-1}(\R^n)}^{\frac{\eta}{1+s}}
\\
&\lesssim  
e^{\frac{\alpha_5}{h}} \|\Lambda_{A_1, q_1}^{\gamma_1, \gamma_2}-\Lambda_{A_2, q_2}^{\gamma_1, \gamma_2}\|^{\frac{\eta}{3(1+s)}}
+
h^{\frac{ \eta}{(n+2)(1+s)}}.
\end{aligned}
\end{equation}
Therefore, we conclude that
\begin{equation}
\label{eq:est_dA_Linfty_domain}
\|\mathrm{d}A\|_{L^\infty(\Omega)}
\lesssim  
e^{\frac{\alpha_5}{h}} \|\Lambda_{A_1, q_1}^{\gamma_1, \gamma_2}-\Lambda_{A_2, q_2}^{\gamma_1, \gamma_2}\|^{\frac{\eta}{3(1+s)}}
+
h^{\frac{ \eta}{(n+2)(1+s)}}.
\end{equation}

\subsection{Estimating the vector field}
\label{subsec:est_A}

We complete the proof of Theorem \ref{thm:estimate_A} in this subsection. The key step is to apply a  decomposition of a vector field \cite[Theorem 3.3.2]{Sharafutdinov}, which we state below for the convenience of readers.
\begin{lem}
\label{lem:vf_decomposition}
Let $\Omega\subseteq\R^n$, $n\ge 3$, be a bounded domain with smooth boundary $\p \Omega$, and let $A\in W^{1,\infty}(\Omega)$ be a complex-valued vector field. Then there exist uniquely determined vector field $A^{\mathrm{sol}}\in W^{1,\infty}(\Omega)$ and function $\varphi\in W^{2,\infty}(\Omega)$ such that
\[
A=A^{\mathrm{sol}}+\nabla \varphi, \quad \div A^{\mathrm{sol}}=0, \quad \varphi|_{\p \Omega}=0.
\]
Furthermore, the vector field $A^{\mathrm{sol}}$ and the function $\varphi$ satisfy the inequalities 
\[
\|\varphi\|_{W^{2,\infty}(\Omega)}\le C\|A\|_{W^{1,\infty}(\Omega)} 
\quad \text{ and } \quad 
\|A^{\mathrm{sol}}\|_{W^{1,\infty}(\Omega)}\le C\|A\|_{W^{1,\infty}(\Omega)},
\]
where the constant $C>0$ is independent of $A$.
\end{lem}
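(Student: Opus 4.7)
My plan is to realize the decomposition by solving a Dirichlet problem for the Poisson equation, then read off the solenoidal part by subtraction. The intuition is that $\nabla\varphi$ should capture all of the divergence of $A$, so we choose $\varphi$ exactly so that $\mathrm{div}\,\nabla\varphi = \mathrm{div}\,A$, with the boundary condition $\varphi|_{\partial\Omega}=0$ pinning down the constants of integration.

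Concretely, first I would consider the Dirichlet problem
\[
\Delta \varphi = \mathrm{div}\, A \quad \text{in } \Omega, \qquad \varphi|_{\partial\Omega} = 0.
\]
Since $A\in W^{1,\infty}(\Omega)$, the right-hand side lies in $L^\infty(\Omega)\subset L^2(\Omega)$, so by standard theory for the Laplacian on the smooth bounded domain $\Omega$ this problem has a unique (complex-valued) weak solution. Define
\[
A^{\mathrm{sol}} := A - \nabla \varphi.
\]
A direct computation gives $\mathrm{div}\, A^{\mathrm{sol}} = \mathrm{div}\, A - \Delta \varphi = 0$ in the sense of distributions, and by construction $\varphi|_{\partial\Omega}=0$, so the decomposition has the stated structure. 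Uniqueness is immediate: if $A = A_1^{\mathrm{sol}}+\nabla\varphi_1 = A_2^{\mathrm{sol}}+\nabla\varphi_2$ with both pieces satisfying the requirements, then $\psi := \varphi_1-\varphi_2$ solves $\Delta\psi = \mathrm{div}(A_2^{\mathrm{sol}}-A_1^{\mathrm{sol}})=0$ with $\psi|_{\partial\Omega}=0$, hence $\psi\equiv 0$ and consequently $A_1^{\mathrm{sol}}=A_2^{\mathrm{sol}}$.

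For the norm bounds, the inequality $\|A^{\mathrm{sol}}\|_{W^{1,\infty}(\Omega)}\lesssim \|A\|_{W^{1,\infty}(\Omega)}$ will follow from $A^{\mathrm{sol}} = A - \nabla\varphi$ once the $W^{2,\infty}$ bound on $\varphi$ is established. The $W^{2,\infty}$ estimate is the main obstacle, because $\mathrm{div}\, A\in L^\infty$ alone would only give $\varphi\in W^{2,p}$ for every finite $p$ via Calderón--Zygmund theory, not $W^{2,\infty}$ directly. To gain the missing endpoint regularity one must exploit the additional structure that the right-hand side is itself a divergence of a $W^{1,\infty}$ field: write $\nabla\varphi = \nabla\Delta^{-1}\mathrm{div}\,A$ (with Dirichlet Green's operator $\Delta^{-1}$), observe that $\nabla\Delta^{-1}\mathrm{div}$ is a singular integral of order zero acting on the components of $A$, and use the smoothness of $\partial\Omega$ together with Schauder-type estimates after exchanging an $L^\infty$ bound on $A$ for a $C^{0,\alpha}$ bound that is freely available from the $W^{1,\infty}$ assumption via the Morrey embedding. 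Applying interior and boundary Schauder estimates to $\Delta\varphi = \mathrm{div}\,A$ with Hölder continuous right-hand side then yields $\varphi\in C^{2,\alpha}(\overline{\Omega})\subset W^{2,\infty}(\Omega)$ with the claimed control by $\|A\|_{W^{1,\infty}(\Omega)}$, completing the proof.
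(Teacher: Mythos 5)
Your construction is exactly the one the paper has in mind (and which it simply cites to Sharafutdinov without proof): solve the Dirichlet problem $\Delta\varphi = \div A$, $\varphi|_{\p\Omega}=0$, set $A^{\mathrm{sol}}=A-\nabla\varphi$, and check uniqueness by noting that the difference of two admissible $\varphi$'s is a harmonic function vanishing on $\p\Omega$. That part is fine.

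The gap is in the $W^{2,\infty}$ regularity step, and it is a real one. You correctly observe at the start that $\div A \in L^\infty$ alone only yields $\varphi\in W^{2,p}$ for $p<\infty$ and that some extra structure must be used. But the fix you propose does not close the gap. Your final sentence asserts ``applying interior and boundary Schauder estimates to $\Delta\varphi=\div A$ with H\"older continuous right-hand side then yields $\varphi\in C^{2,\alpha}(\overline{\Omega})$.'' The right-hand side $\div A$ is \emph{not} H\"older continuous under the hypothesis $A\in W^{1,\infty}$: the components $A_j$ are Lipschitz (hence H\"older), but their first derivatives $\p_j A_j$ are merely bounded measurable. You have silently upgraded ``$A$ is H\"older'' to ``$\div A$ is H\"older,'' which is false. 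If instead you run the singular-integral argument you sketch, writing $\nabla\varphi = \nabla\Delta^{-1}\div A$ and using that this zero-order Calder\'on--Zygmund operator is bounded on $C^{0,\alpha}$ with $A\in C^{0,\alpha}$, you obtain $\nabla\varphi\in C^{0,\alpha}$, i.e.\ $\varphi\in C^{1,\alpha}(\overline{\Omega})$ -- one full derivative short of the claimed $C^{2,\alpha}\subset W^{2,\infty}$. Nothing in the argument produces a bound on $\nabla^2\varphi$ in $L^\infty$: a Calder\'on--Zygmund operator applied to $\div A\in L^\infty$ lands in $BMO$, not $L^\infty$, and applied to $A\in C^{0,1}$ lands in the Zygmund class, not Lipschitz. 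To justify the $W^{2,\infty}$ estimate as stated you would need either $\div A$ H\"older (not available here) or a genuinely different argument; as written, the step does not go through.

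One further, smaller point worth flagging: the paper's own use of this lemma is routed through the Sobolev-scale version of the regularity statement -- the remark following the lemma records $\|\varphi\|_{H^{s+1}(\Omega)}\lesssim\|A\|_{H^s(\Omega)}$ by standard elliptic theory, and the downstream estimates in Subsection 3.3 only invoke the $H^s$ bound together with interpolation and $\|A^{\mathrm{sol}}\|_{L^\infty}\lesssim\|\mathrm{d}A\|_{L^\infty}$. So while the $W^{2,\infty}$ bound is part of the cited statement, it is the $H^s$ regularity, which your Dirichlet-problem construction does deliver by textbook elliptic estimates, that carries the load in the paper.
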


We remark that the function $\varphi$ in Lemma \ref{lem:vf_decomposition} is a solution to the boundary value problem
\[
\begin{cases}
\Delta \varphi=\div A \quad \text{in} \quad \Omega,
\\
\varphi=0 \quad \text{on} \quad \p \Omega.
\end{cases}
\]
By elliptic regularity, for any vector field $A\in H^s(\Omega)$, we have $\varphi \in H^{s+1}(\Omega)$, which satisfies the estimate
\begin{equation}
\label{eq:est_phi_and_A}
\|\varphi\|_{H^s(\Omega)}
\lesssim
\|A\|_{H^s(\Omega)}.
\end{equation}

On the other hand, by applying  \cite[Lemma 6.2]{Tzou_stability}, we get that
\begin{equation}
\label{eq:est_solenoidal}
\|A^{\mathrm{sol}}\|_{L^\infty(\Omega)}
\lesssim
\|\mathrm{d}A\|_{L^\infty(\Omega)}.
\end{equation}
It then follows from the triangle inequality and \eqref{eq:est_solenoidal} that
\begin{align*}
\|A\|_{L^\infty(\Omega)}
&\le 
\|A^{\mathrm{sol}}\|_{L^\infty(\Omega)}+\|\nabla \varphi\|_{L^\infty(\Omega)} 
\\
&\lesssim 
\|\mathrm{d}A\|_{L^\infty(\Omega)}+\|\nabla \varphi\|_{L^\infty(\Omega)}.
\end{align*}
We have already obtained an estimate for $\|\mathrm{d}A\|_{L^\infty(\Omega)}$ in the estimate \eqref{eq:est_dA_Linfty_domain}. Hence, in order to complete the proof of Theorem \ref{thm:estimate_A}, it suffices to provide an estimate for $\|\nabla \varphi\|_{L^\infty(\Omega)}$, which is given in the following result. 

\begin{lem}
\label{lem:est_test_function_gradient}
Let $\varphi\in W^{2,\infty}(\Omega)$ be the function from Lemma \ref{lem:vf_decomposition}, and let $\eta, \tilde \eta>0$ be constants such that $s=\frac{n}{2}+2\eta$ and $s-1=\frac{n}{2}+2 \tilde \eta$. Then there exist  constants $\alpha_6>0$ and $h_0>0$ such that the estimate
\begin{equation}
\label{eq:Linfty_norm_gradient_test_function}
\|\nabla \varphi\|_{L^\infty(\Omega)} 
\lesssim 
e^{\frac{\alpha_6}{h}}\|\Lambda_{A_1, q_1}^{\gamma_1, \gamma_2}-\Lambda_{A_2, q_2}^{\gamma_1, \gamma_2}\|^{\frac{ \eta \tilde \eta}{3(1+s)^2}}
+
h^{\frac{2 \eta \tilde \eta}{(n+2)^2(1+s)^2}}
\end{equation}
holds for all $h<h_0$.  
\end{lem}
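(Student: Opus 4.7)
The proof will follow the same architecture as Section \ref{subsec:est_dA}: derive an $H^{-1}$-type estimate for $\varphi$ using the CGO solutions \eqref{eq:v_form}--\eqref{eq:u2_form} in the integral estimate \eqref{eq:integral_estimate}, and then pass to an $L^\infty$ bound on $\nabla \varphi$ via interpolation with the \emph{a priori} estimate $\|\varphi\|_{H^s(\Omega)} \lesssim \|A\|_{H^s(\Omega)} \le M$ coming from \eqref{eq:est_phi_and_A}, combined with a Sobolev embedding of the form $H^{n/2+1+\tilde\eta}(\R^n) \hookrightarrow C^1(\R^n)$, which is legitimate because $\tilde \eta > 0$ under the assumption $s > n/2 + 1$. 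A direct computation shows that this interpolation is set up so that the interpolation exponent comes out to be exactly $\tilde\eta/(1+s)$, which accounts for the shape of the exponents in \eqref{eq:Linfty_norm_gradient_test_function}.

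The new input is to substitute the decomposition $A = A^{\mathrm{sol}} + \nabla \varphi$ from Lemma \ref{lem:vf_decomposition} inside \eqref{eq:int_identity} and to integrate by parts the $\nabla \varphi$-contribution using the vanishing trace $\varphi|_{\p\Omega} = 0$. Since the boundary term is zero, a short calculation using $Du_2 = -i\nabla u_2$ gives
\begin{equation*}
\int_\Omega \nabla \varphi \cdot D u_2 \, \overline{u_1}\, dx = i \int_\Omega \varphi \bigl( \Delta u_2 \, \overline{u_1} + \nabla u_2 \cdot \nabla \overline{u_1}\bigr) dx.
\end{equation*}
Substituting the CGO solutions with the simplest choice $a_1 = a_2 = 1$ of the amplitudes solving \eqref{eq:trans_eq_u1_u2}, and using $\zeta_j \cdot \zeta_j = 0$ together with $\zeta_2 \cdot \overline{\zeta_1}/\tau^2 \to -|\xi|^2/2$ as $\tau \to 0$, the leading-order contribution of the right-hand side becomes proportional to $\mathcal{F}(\varphi)(\xi)$, where $\varphi$ is extended by zero to $\R^n$. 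Meanwhile, the $A^{\mathrm{sol}}$-contribution in \eqref{eq:int_identity} is controlled by Cauchy--Schwarz and the already-established estimate $\|A^{\mathrm{sol}}\|_{L^\infty(\Omega)} \lesssim \|\mathrm{d}A\|_{L^\infty(\Omega)}$, namely \eqref{eq:est_solenoidal}--\eqref{eq:est_dA_Linfty_domain}; and the $q$-contribution is absorbed as a lower-order error exactly as in Section \ref{subsec:est_dA}.

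Linking $\tau = \lambda h$ as in \eqref{eq:exponential_tau_h} and using the semiclassical remainder estimate \eqref{eq:est_r_domain}, this analysis produces a pointwise Fourier bound on $\mathcal{F}(\varphi)(\xi)$ in the spirit of \eqref{eq:est_Fourier_dA_new}. Mimicking the Parseval decomposition \eqref{eq:H-1norm_split} and optimizing the cut-off $\rho$ then yields an estimate of the form
\begin{equation*}
\|\varphi\|_{H^{-1}(\R^n)} \lesssim e^{\alpha/h}\,\|\Lambda_{A_1,q_1}^{\gamma_1,\gamma_2} - \Lambda_{A_2,q_2}^{\gamma_1,\gamma_2}\|^{\eta/(3(1+s))} + h^{2\eta/((n+2)^2(1+s))},
\end{equation*}
after which interpolation between $H^{-1}$ and $H^s$ with exponent $\tilde\eta/(1+s)$, followed by Sobolev embedding, delivers \eqref{eq:Linfty_norm_gradient_test_function}. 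The most delicate step will be the asymptotic analysis of $\Delta u_2\,\overline{u_1} + \nabla u_2 \cdot \nabla \overline{u_1}$, whose expansion contains formally singular terms of the form $(i\zeta_2/\tau)\cdot \nabla r_j$ arising from the non-vanishing of $\zeta_2\cdot\overline{\zeta_1}/\tau^2$; these must be controlled via \eqref{eq:est_r_domain} and the identity $\overline{\zeta_1} - \zeta_2 = \tau\xi$ so that they can be absorbed into error terms without contaminating the final exponents.
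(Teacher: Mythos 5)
Your overall architecture matches the paper's — decompose $A = A^{\mathrm{sol}} + \nabla\varphi$, extract a Fourier bound on $\varphi$, pass to $H^{-1}(\R^n)$ by Parseval, then interpolate with the \emph{a priori} $H^s$ bound and embed into $C^1$ — and the interpolation exponent $\tilde\eta/(1+s)$ you compute is correct. However, the central step, producing $\mathcal{F}(\varphi)(\xi)$ from the $\nabla\varphi$-contribution, contains a genuine gap.

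The problem is the choice $a_1 = a_2 = 1$. With constant amplitudes, the reduced main term of the CGO substitution is proportional to $\zeta_2 \cdot \int_\Omega \nabla\varphi\, e^{-ix\cdot\xi}\,dx$, and integrating by parts with $\varphi|_{\p\Omega}=0$ gives $i(\zeta_2\cdot\xi)\,\mathcal{F}(\varphi)(\xi) = -\tfrac{i\tau|\xi|^2}{2}\,\mathcal{F}(\varphi)(\xi)$, which vanishes as $\tau\to 0$. Equivalently, in your formulation after moving derivatives onto $u_1,u_2$: the coefficient $\zeta_2\cdot\overline{\zeta_1}/\tau^2 \to -|\xi|^2/2$ you exhibit is not a harmless proportionality constant but a $\xi$-dependent weight that vanishes at $\xi=0$, so the resulting bound on $|\xi|^2|\mathcal{F}(\varphi)(\xi)|$ gives no control of $\|\varphi\|_{H^{-1}}$ in the low-frequency ball $|\xi|\le\rho$. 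Meanwhile the singular terms you flag, such as $\tau^{-1}\zeta_2\cdot\nabla r_j$, cannot be absorbed: the estimate \eqref{eq:est_r_domain} only gives $\|\nabla r_j\|_{L^2(\Omega)} = \cO(1)$, so these contributions are $\cO(\tau^{-1})$ in the integral — the same order as the entire left-hand side — and they do not decay. Their appearance is not a technicality to be ``controlled without contaminating the exponents''; it is a sign that integrating by parts on the raw CGO product is the wrong maneuver.

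The paper avoids both difficulties by using a \emph{non-constant} amplitude. It keeps $a_1 = 1$ but takes $a_2$ to be a solution of the first-order transport equation $((\mu^{(1)}-i\mu^{(2)})\cdot\nabla)\,a_2 = 1$, which is admissible since it trivially satisfies \eqref{eq:trans_eq_u1_u2}. Then, starting from the already-reduced estimate \eqref{eq:I1I2_est} (so all remainder terms $r_j$ have been disposed of beforehand), one decomposes $A = A^{\mathrm{sol}} + \nabla\varphi$ and integrates by parts \emph{only in the direction $\mu^{(1)}-i\mu^{(2)}$}:
\begin{equation*}
(\mu^{(1)}-i\mu^{(2)})\cdot\int_\Omega \nabla\varphi\, e^{-ix\cdot\xi}\,a_2\,dx
= -\int_\Omega \varphi\, e^{-ix\cdot\xi}\,\bigl((\mu^{(1)}-i\mu^{(2)})\cdot\nabla a_2\bigr)\,dx
= -\mathcal{F}(\varphi)(\xi),
\end{equation*}
where the phase $e^{-ix\cdot\xi}$ contributes no derivative term because $(\mu^{(1)}-i\mu^{(2)})\cdot\xi = 0$. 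The coefficient is now exactly $-1$, independent of $\xi$, and there are no $\tau^{-1}\nabla r_j$ contaminants since the $r_j$ never re-enter. To repair your argument you must (i) work from the reduced integral \eqref{eq:I1I2_est} rather than integrating by parts inside \eqref{eq:int_identity}, and (ii) replace $a_2=1$ by the linear amplitude solving $(\mu^{(1)}-i\mu^{(2)})\cdot\nabla a_2 = 1$.
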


\begin{proof}
We shall argue similarly as in the proof of \cite[Lemma 3.6]{Ma_Liu_stability} and \cite[Lemma 3.4]{Liu_2024_biharmonic}. We extend $\varphi$ by zero to $\R^n\setminus \Omega$ and denote the extension by the same letter. Our starting point is the analysis of the integrals appearing in \eqref{eq:int_id_substitute_A} in the limit $h\to 0$. By utilizing the same arguments leading from \eqref{eq:I1_sum}  to \eqref{eq:est_Fourier_dA_new}, we obtain
\begin{equation}
\label{eq:I1I2_est}
\left|(\mu^{(1)}-i\mu^{(2)})\cdot \int_{\Omega} A e^{-ix\cdot \xi}\overline{a_1}a_2dx\right|
\lesssim 
h
+
e^{\frac{\alpha_4}{h}}\|\Lambda_{A_1, q_1}^{\gamma_1, \gamma_2}-\Lambda_{A_2, q_2}^{\gamma_1, \gamma_2}\|^{\frac{1}{3}},
\end{equation}
where the constant $\alpha_4>0$ satisfies the second inequality in \eqref{eq:exponential_tau_h}. 

Let us now choose  $a_1=1$, and $a_2$ that satisfies the equation
\begin{equation}
\label{eq:transport_a2}
((\mu^{(1)} - i\mu^{(2)}) \cdot \nabla)a_2(x, \mu^{(1)} - i\mu^{(2)})=1 \quad \text{in} \quad \Omega.
\end{equation}
By direct computations, our choice of $a_1$ and $a_2$ satisfies the   transport equation   \eqref{eq:trans_eq_u1_u2}. Applying Lemma \ref{lem:vf_decomposition}, we rewrite the left-hand side of \eqref{eq:I1I2_est} as
\begin{align*}
&(\mu^{(1)}-i\mu^{(2)}) \cdot \int_{\Omega}  A e^{-ix\cdot \xi}a_2dx
\\
&= (\mu^{(1)}-i\mu^{(2)}) \cdot \int_{\Omega} A^{\mathrm{sol}} e^{-ix\cdot \xi}a_2dx
+
(\mu^{(1)}-i\mu^{(2)}) \cdot \int_{\Omega}  \nabla \varphi e^{-ix\cdot \xi}a_2dx.
\end{align*}
Since the amplitude  $a_2\in C^\infty(\overline{\Omega})$, we have
\begin{equation}
\label{eq:est_sol_A}
\left|(\mu^{(1)}-i\mu^{(2)})\cdot \int_{\Omega}  A^{\mathrm{sol}} e^{-ix\cdot \xi}a_2dx\right|
\lesssim  
\|A^{\mathrm{sol}}\|_{L^\infty(\Omega)}.
\end{equation}
On the other hand, as $a_2$ solves  equation \eqref{eq:transport_a2}, the function $\varphi$ satisfies $\varphi|_{\p \Omega}=0$, and $\mu^{(1)}\cdot \xi=\mu^{(2)}\cdot \xi=0$, we integrate by parts to get that
\[
(\mu^{(1)}-i\mu^{(2)})\cdot \int_{\Omega}  \nabla \varphi e^{-ix\cdot \xi}a_2dx=-\int_{\Omega} \varphi e^{-ix\cdot \xi}dx=-\mathcal{F}(\varphi)(\xi).
\]
Therefore, we obtain from estimates \eqref{eq:I1I2_est} and \eqref{eq:est_sol_A} that
\begin{equation}
\label{eq:est_phi_Fourier}
|\mathcal{F}(\varphi)(\xi)|
\lesssim 
h
+
e^{\frac{\alpha_4}{h}}\|\Lambda_{A_1, q_1}^{\gamma_1, \gamma_2}-\Lambda_{A_2, q_2}^{\gamma_1, \gamma_2}\|^{\frac{1}{3}}
+
\|A^{\mathrm{sol}}\|_{L^\infty(\Omega)}.
\end{equation}
Thanks to estimates \eqref{eq:est_dA_Linfty_domain} and \eqref{eq:est_solenoidal}, we have
\[
\|A^\mathrm{sol}\|_{L^\infty(\Omega)}
\lesssim 
e^{\frac{\alpha_5}{h}} \|\Lambda_{A_1, q_1}^{\gamma_1, \gamma_2}-\Lambda_{A_2, q_2}^{\gamma_1, \gamma_2}\|^{\frac{\eta}{3(1+s)}}
+
h^{\frac{ \eta}{(n+2)(1+s)}}.
\]
Hence, due to  the inequalities $0<h\ll 1$, $\alpha_5>\alpha_4>0$, and $\frac{\eta}{1+s}<1$, we get from \eqref{eq:est_phi_Fourier} that
\begin{equation}
\label{eq:est_Fourier_phi}
|\mathcal{F}(\varphi)(\xi)|
\lesssim 
e^{\frac{\alpha_5}{h}}\|\Lambda_{A_1, q_1}^{\gamma_1, \gamma_2}-\Lambda_{A_2, q_2}^{\gamma_1, \gamma_2}\|^{\frac{\eta}{3(1+s)}}
+
h^{\frac{\eta}{(n+2)(1+s)}}.
\end{equation}

We next proceed similarly as in Subsection \ref{subsec:est_dA}  to deduce an estimate for $\|\varphi\|_{H^{-1}(\Omega)}$. To this end, let $\rho>0$ be a parameter that we shall choose later. Applying the Parseval's formula again, we write
\[
\|\varphi\|_{H^{-1}(\R^n)}^2
\le 
\int_{|\xi|\le \rho}\frac{|\cF(\varphi)(\xi)|^2}{1+|\xi|^2}d\xi 
+
\int_{|\xi|\ge \rho}\frac{|\cF(\varphi)(\xi)|^2}{1+|\xi|^2}d\xi.
\]
By the same computations as in the estimate \eqref{eq:est_dA_large_rho}, we have
\begin{equation}
\label{eq:est_phi_large_rho}
\int_{|\xi|\ge \rho}\frac{|\cF(\varphi)(\xi)|^2}{1+|\xi|^2}d\xi
\lesssim 
\frac{1}{\rho^2}.
\end{equation}
On the other hand,  we utilize the same computations as in the estimate \eqref{eq:est_dA_small_rho} to deduce from \eqref{eq:est_Fourier_phi} that
\begin{equation}
\label{eq:est_phi_small_rho}
\int_{|\xi|\le \rho}\frac{|\cF(\varphi)(\xi)|^2}{1+|\xi|^2}d\xi 
\lesssim  
\rho^n\left(e^{\frac{2\alpha_5}{h}}\|\Lambda_{A_1, q_1}^{\gamma_1, \gamma_2}-\Lambda_{A_2, q_2}^{\gamma_1, \gamma_2}\|^{\frac{2\eta}{3(1+s)}}
+
h^{\frac{2\eta}{(n+2)(1+s)}}\right).
\end{equation}
We then conclude from \eqref{eq:est_phi_large_rho} and \eqref{eq:est_phi_small_rho} that
\[
\|\varphi\|_{H^{-1}(\R^n)}^2
\lesssim  \rho^ne^{\frac{2\alpha_5}{h}}\|\Lambda_{A_1, q_1}^{\gamma_1, \gamma_2}-\Lambda_{A_2, q_2}^{\gamma_1, \gamma_2}\|^{\frac{2\eta}{3(1+s)}}
+
\rho^nh^{\frac{2\eta}{(n+2)(1+s)}}
+
\frac{1}{\rho^2}.
\]

Choosing $\rho>0$ such that $\rho^nh^{\frac{2  \eta}{(n+2)(1+s)}}=\frac{1}{\rho^2}$, namely, $\rho=h^{-\frac{2\eta}{(n+2)^2(1+s)}}$, we get
\begin{align*}
\|\varphi\|_{H^{-1}(\R^n)}^2 
&\lesssim 
h^{-\frac{2n  \eta}{(n+2)^2(1+s)}} e^{\frac{2\alpha_5}{h}}\|\Lambda_{A_1, q_1}^{\gamma_1, \gamma_2}-\Lambda_{A_2, q_2}^{\gamma_1, \gamma_2}\|^{\frac{2\eta}{3(1+s)}}
+h^{\frac{4 \eta}{(n+2)^2(1+s)}}
\\
&\lesssim 
e^{\frac{2\alpha_6}{h}}\|\Lambda_{A_1, q_1}^{\gamma_1, \gamma_2}-\Lambda_{A_2, q_2}^{\gamma_1, \gamma_2}\|^{\frac{2\eta}{3(1+s)}}+h^{\frac{4  \eta}{(n+2)^2(1+s)}},
\end{align*}
for some constant $\alpha_6>\alpha_5>0$ such that $h^{-\frac{2n  \eta}{(n+2)^2(1+s)}} e^{\frac{2\alpha_5}{h}}\le e^{\frac{2\alpha_6}{h}}$. This gives us
\begin{equation}
\label{eq:est_phi_H-1}
\|\varphi\|_{H^{-1}(\R^n)} 
\lesssim 
e^{\frac{\alpha_6}{h}}\|\Lambda_{A_1, q_1}^{\gamma_1, \gamma_2}-\Lambda_{A_2, q_2}^{\gamma_1, \gamma_2}\|^{\frac{\eta}{3(1+s)}}
+
h^{\frac{2\eta}{(n+2)^2(1+s)}}.
\end{equation}

To complete the proof of the lemma, let us set $\tilde \eta = \frac{1}{2}(s-(\frac{n}{2}+1))>0$. Using the Sobolev embedding theorem and the interpolation theorem, we deduce from estimates \eqref{eq:est_phi_and_A} and  \eqref{eq:est_phi_H-1}  that
\begin{align*}
	\|\nabla \varphi\|_{L^\infty(\Omega)}
	&\le
	\|\nabla \varphi\|_{L^\infty(\R^n)}
	\\
	&\lesssim 
	\|\nabla \varphi\|_{H^{\frac{n}{2}+ \tilde \eta}(\R^n)}
	\\
	&\lesssim  
	\|\nabla \varphi\|_{H^{-2}(\R^n)}^{\frac{\tilde \eta}{1+s}} \|\nabla \varphi\|_{H^{s-1}(\R^n)}^{\frac{1+s-\tilde \eta}{1+s}}
	\\
	&\lesssim  
	\|\nabla \varphi\|_{H^{-2}(\R^n)}^{\frac{\tilde \eta}{1+s}} \|\varphi\|_{H^{s}(\R^n)}^{\frac{1+s-\tilde \eta}{1+s}}
	\\
	&\lesssim
	\|\nabla \varphi\|_{H^{-2}(\R^n)}^{\frac{\tilde \eta}{1+s}}
	\\
	&\lesssim 
	\|\varphi\|_{H^{-1}(\R^n)}^{\frac{\tilde \eta}{1+s}}
	\\
	&\lesssim
	e^{\frac{\alpha_6}{h}}\|\Lambda_{A_1, q_1}^{\gamma_1, \gamma_2}-\Lambda_{A_2, q_2}^{\gamma_1, \gamma_2}\|^{\frac{\eta \tilde \eta}{3(1+s)^2}}
	+
	h^{\frac{2 \eta \tilde \eta}{(n+2)^2(1+s)^2}},
\end{align*}
where we have applied the inequalities $e^{\frac{\alpha_6}{h}} \gg 1$ for $h$ small and $\frac{\tilde \eta}{1+s}<1$ in the last step. This completes the proof of Lemma \ref{lem:est_test_function_gradient}.
\end{proof}

We are now ready to complete the proof of Theorem \ref{thm:estimate_A}.
We first assume that
$\|\Lambda_{A_1, q_1}^{\gamma_1, \gamma_2}-\Lambda_{A_2, q_2}^{\gamma_1, \gamma_2}\| 
\ll
1.$
Due to the inequalities $0<h\ll 1$, $\frac{\tilde \eta}{1+s}<1$, and $\frac{2\tilde \eta}{(n+2)(1+s)}< 1$, we combine estimates \eqref{eq:est_dA_Linfty_domain}, \eqref{eq:est_solenoidal}, and \eqref{eq:Linfty_norm_gradient_test_function} to deduce that
\begin{equation}
\label{eq:est_A_Linfty}
\begin{aligned}
\|A\|_{L^\infty(\Omega)}
&\lesssim 
e^{\frac{\alpha_5}{h}} \|\Lambda_{A_1, q_1}^{\gamma_1, \gamma_2}-\Lambda_{A_2, q_2}^{\gamma_1, \gamma_2}\|^{\frac{\eta}{3(1+s)}}
+
h^{\frac{ \eta}{(n+2)(1+s)}}
+
e^{\frac{\alpha_6}{h}}\|\Lambda_{A_1, q_1}^{\gamma_1, \gamma_2}-\Lambda_{A_2, q_2}^{\gamma_1, \gamma_2}\|^{\frac{\eta \tilde \eta}{3(1+s)^2}}
+
h^{\frac{2 \eta \tilde \eta}{(n+2)^2(1+s)^2}}
\\
&\lesssim  
e^{\frac{\alpha_6}{h}} \|\Lambda_{A_1, q_1}^{\gamma_1, \gamma_2}-\Lambda_{A_2, q_2}^{\gamma_1, \gamma_2}\|^{\frac{ \eta \tilde \eta}{3(1+s)^2}}
+
h^{\frac{2 \eta \tilde \eta}{(n+2)^2(1+s)^2}}.
\end{aligned}
\end{equation}

Taking $h>0$ sufficiently small,  we argue similarly as in \cite[Section 3]{Liu_2024_biharmonic} to conclude that there exists a constant $\delta>0$ such that  $\|\Lambda_{A_1, q_1}^{\gamma_1, \gamma_2}-\Lambda_{A_2, q_2}^{\gamma_1, \gamma_2}\| <\delta$ implies $h\le h_0$. Let us then  choose
\[
h= \alpha_6\big( \big|\log \|\Lambda_{A_1, q_1}^{\gamma_1, \gamma_2}-\Lambda_{A_2, q_2}^{\gamma_1, \gamma_2}\|\big|\big)^{-\frac{ \eta \tilde \eta }{6(1+s)^2}}
\]
and substitute it into \eqref{eq:est_A_Linfty} to deduce that
\[
\|A\|_{L^\infty(\Omega)}
\lesssim 
\|\Lambda_{A_1, q_1}^{\gamma_1, \gamma_2}-\Lambda_{A_2, q_2}^{\gamma_1, \gamma_2}\|^{\frac{ \eta \tilde \eta}{6(1+s)^2}}
+
\big|\log \|\Lambda_{A_1, q_1}^{\gamma_1, \gamma_2}-\Lambda_{A_2, q_2}^{\gamma_1, \gamma_2}\|\big|^{-\frac{\eta^2 \tilde \eta^2}{3(n+2)^2(1+s)^4}}.
\]

On the other hand, if $\|\Lambda_{A_1, q_1}^{\gamma_1, \gamma_2}-\Lambda_{A_2, q_2}^{\gamma_1, \gamma_2}\| > \delta$, we have
\[
\|A\|_{L^\infty(\Omega)} 
\lesssim 
M
\lesssim 
\frac{M}{\delta^{\frac{ \eta \tilde \eta}{6(1+s)^2}}}\delta^{\frac{ \eta \tilde \eta}{6(1+s)^2}}
\lesssim  \frac{M}{\delta^{\frac{ \eta \tilde \eta}{6(1+s)^2}}}\|\Lambda_{A_1, q_1}^{\gamma_1, \gamma_2}-\Lambda_{A_2, q_2}^{\gamma_1, \gamma_2}\|^{\frac{ \eta \tilde \eta}{6(1+s)^2}}.
\]
This completes the proof of Theorem \ref{thm:estimate_A}.

\section{Proof of Theorem \ref{thm:estimate_q}}
\label{sec:proof_q}

Our  goal of  this section is to verify the stability estimate concerning the zeroth order perturbation $q$. Let us  again denote $A=A_2-A_1$ and $q=q_2-q_1$. We also extend $q$ by zero on $\mathbb{R}^n \setminus \Omega$ and denote the extension by the same letter. 

We shall begin the proof by recalling the integral identity \eqref{eq:int_identity}:
\[
\int_\Omega (A\cdot Du_2 +qu_2)\overline{u_1}dx
= 
\int_\Omega -\overline{u_1}P(x,D)u,
\]
where $u_1\in H^4(\Omega)$ is a solution to the equation  $\mathcal{L}_{A_1,q_1}^*u_1=0$ in $\Omega$, and $u_2$ and $u$ satisfy   boundary value problems \eqref{eq:bvp_u2} and \eqref{eq:difference}, respectively. Moreover, same as in Section \ref{sec:proof_magnetic}, $P(x,D)$ is a third order operator defined by the formula
\[
P(x,D)= [\Delta^2,\chi] + A_1\cdot [D,\chi]. 
\]

In view of Proposition \ref{prop:integral_inequality} and the estimate \eqref{eq:est_solutions}, we perform similar computations as in the estimate \eqref{eq:est_Fourier_dA} to obtain that
\begin{equation}
\label{eq:int_est_q}
\left|\int_\Omega \left(A\cdot Du_2+qu_2\right)\overline{u_1}dx\right|
\lesssim  
e^{\frac{4R}{\tau}-\frac{\alpha_1}{3h}}
+
e^{\frac{6R}{\tau}+\frac{\alpha_2}{3h}} \|\Lambda_{A_1, q_1}^{\gamma_1, \gamma_2}-\Lambda_{A_2, q_2}^{\gamma_1, \gamma_2}\|^{\frac{1}{3}},
\quad  \tau \to 0. 
\end{equation}
Furthermore, the estimate \eqref{eq:est_solutions}, in conjunction with the Cauchy-Schwarz inequality and the trace theorem, gives us
\begin{equation}
\label{eq:est_A_term}
\begin{aligned}
\left|\int_\Omega (A\cdot Du_2)\overline{u_1}dx\right| 
&\le \|A\|_{L^\infty(\Omega)}\|Du_2\|_{L^2(\Omega)}\|u_1\|_{L^2(\Omega)}
\\
&\le
\|A\|_{L^\infty(\Omega)} \|u_2\|_{H^1(\Omega)}\|u_1\|_{L^2(\Omega)}
\\
&\lesssim 
\|A\|_{L^\infty(\Omega)} \frac{1}{\tau}e^{\frac{2R}{\tau}}e^\frac{2R}{\tau}
\\
&\lesssim 
\|A\|_{L^\infty(\Omega)}e^\frac{5R}{\tau}, \quad \tau \to 0,
\end{aligned}
\end{equation}
where we have applied the inequality $\frac{1}{\tau}\le e^\frac{R}{\tau}$ in the last step. Hence, it follows from  \eqref{eq:int_est_q} and \eqref{eq:est_A_term} that
\begin{equation}
\label{eq:integral_est_elec_potential_1}
\bigg|\int_\Omega qu_2\overline{u_1}dx\bigg| \lesssim  
e^{\frac{4R}{\tau}-\frac{\alpha_1}{3h}}
+
e^{\frac{6R}{ \tau}+\frac{\alpha_2}{3h}} \|\Lambda_{A_1, q_1}^{\gamma_1, \gamma_2}-\Lambda_{A_2, q_2}^{\gamma_1, \gamma_2}\|^{\frac{1}{3}} 
+
e^\frac{5R}{\tau} \|A\|_{L^\infty(\Omega)},
\quad  \tau \to 0.
\end{equation}
Here the constants $\alpha_1, \alpha_2>0$ are the same as in Lemma \ref{lem:unique_continuation}.

We next substitute the CGO solutions $u_1$ and $u_2$ given by \eqref{eq:v_form} and \eqref{eq:u2_form}, with amplitudes $a_1=a_2=1$, into the left-hand side of \eqref{eq:integral_est_elec_potential_1}. Then we obtain from straightforward computations that
\[
\int_\Omega qu_2\overline{u_1}dx =  \mathcal{F}(q)(\xi)+I,
\]
where 
\[
I = \int_\Omega qe^{-ix\cdot \xi}(\overline{r_1}+r_2+r_2\overline{r_1})dx.
\]
From here, an application of the estimate \eqref{eq:est_r_domain} and the Cauchy-Schwarz inequality gives us the estimate
\begin{equation}
\label{eq:est_rem_q}
|I| \lesssim  \tau, \quad \tau \to 0.
\end{equation}
Thus, \eqref{eq:integral_est_elec_potential_1} and \eqref{eq:est_rem_q} yield that
\[
|\mathcal{F}(q)(\xi)| 
\lesssim 
\tau
+
e^{\frac{4R}{\tau}-\frac{\alpha_1}{3h}}
+
e^{\frac{6R}{\tau}+\frac{\alpha_2}{3h}} \|\Lambda_{A_1, q_1}^{\gamma_1, \gamma_2}-\Lambda_{A_2, q_2}^{\gamma_1, \gamma_2}\|^{\frac{1}{3}} 
+
e^\frac{5R}{\tau}\|A\|_{L^\infty(\Omega)},
\quad  \tau \to 0.
\]
Furthermore, by Theorem \ref{thm:estimate_A}, we get that
\begin{equation}
\label{eq:integral_est_elec_potential_3}
\begin{aligned}
|\mathcal{F}(q)(\xi)| 
&\lesssim 
\tau+
e^{\frac{4R}{\tau}-\frac{\alpha_1}{3h}}
+e^{\frac{6R}{\tau}+\frac{\alpha_2}{3h}} \|\Lambda_{A_1, q_1}^{\gamma_1, \gamma_2}-\Lambda_{A_2, q_2}^{\gamma_1, \gamma_2}\|^{\frac{1}{3}} +e^{\frac{5R}{\tau}} \|\Lambda_{A_1, q_1}^{\gamma_1, \gamma_2}-\Lambda_{A_2, q_2}^{\gamma_1, \gamma_2}\|^{\mu_1} 
\\
& \: \: \: \: +
e^\frac{5R}{\tau} \left|\log\|\Lambda_{A_1, q_1}^{\gamma_1, \gamma_2}-\Lambda_{A_2, q_2}\|\right|^{-\mu_2}. 
\end{aligned}
\end{equation}
Let us now set $\tau = \lambda h$, where $\lambda >0$ is a constant large enough so that both inequalities in \eqref{eq:exponential_tau_h} hold. We also assume that $ \|\Lambda_{A_1, q_1}^{\gamma_1, \gamma_2}-\Lambda_{A_2, q_2}^{\gamma_1, \gamma_2}\| \ll 1$ for the moment. Since  $\mu_1 = \frac{\eta^2}{6(1+s)^2}<\frac{1}{3}$, we deduce from \eqref{eq:integral_est_elec_potential_3} that
\begin{equation}
\label{eq:integral_est_elec_potential_4}
\begin{aligned}
|\mathcal{F}(q)(\xi)| 
&\lesssim 
h+
e^{-\frac{\alpha_3}{h}}
+e^\frac{\alpha_4}{h} \|\Lambda_{A_1, q_1}^{\gamma_1, \gamma_2}-\Lambda_{A_2, q_2}^{\gamma_1, \gamma_2}\|^{\frac{1}{3}} +e^{\frac{\alpha_7}{h}} \|\Lambda_{A_1, q_1}^{\gamma_1, \gamma_2}-\Lambda_{A_2, q_2}^{\gamma_1, \gamma_2}\|^{\mu_1}
\\
&  \: \: \: \: +
e^\frac{\alpha_5}{h}\left|\log\|\Lambda_{A_1, q_1}^{\gamma_1, \gamma_2}-\Lambda_{A_2, q_2}\|\right|^{-\mu_2}
\\
&\lesssim  
h+e^{\frac{\alpha_8}{h}} \|\Lambda_{A_1, q_1}^{\gamma_1, \gamma_2}-\Lambda_{A_2, q_2}^{\gamma_1, \gamma_2}\|^{\mu_1}
+
e^\frac{\alpha_5}{h} \left|\log\|\Lambda_{A_1, q_1}^{\gamma_1, \gamma_2}-\Lambda_{A_2, q_2}\|\right|^{-\mu_2}.
\end{aligned}
\end{equation}
Here the constants $\alpha_3$ and $\alpha_4$ are the same as in \eqref{eq:exponential_tau_h}, $\alpha_5 = \frac{5R}{\lambda}$, and $\alpha_8 = \max\{\alpha_4,\alpha_7\}$. We have also used the inequality $e^{-\frac{\alpha_3}{h}} \le h$ for $h$ sufficiently small in the last step.

We next move to establish an  estimate for $\|q\|_{H^{-1}(\mathbb{R}^n)}$. To this end, let $\rho >0$ be a parameter to be chosen later. By the Parseval's formula, we write
\[
\|q\|_{H^{-1}(\mathbb{R}^n)}^2 \le \int_{|\xi|\le \rho} \dfrac{|\mathcal{F}(q)(\xi)|^2}{1+|\xi|^2}d\xi + \int_{|\xi|\geq \rho} \dfrac{|\mathcal{F}(q)(\xi)|^2}{1+|\xi|^2}d\xi.
\]
Arguing similarly as in the estimate \eqref{eq:est_dA_large_rho}, we get that
\begin{equation}
\label{eq:est_q_in_H-1_part_1}
\int_{|\xi|\geq \rho} \dfrac{|\mathcal{F}(q)(\xi)|^2}{1+|\xi|^2}d\xi 
\lesssim  
\frac{1}{\rho^2}.
\end{equation}
On the other hand, it follows from the estimate \eqref{eq:integral_est_elec_potential_4} that
\begin{equation}
\label{eq:est_q_in_H-1_part_2}
\int_{|\xi|\le \rho} \frac{|\mathcal{F}(q)(\xi)|^2}{1+|\xi|^2}d\xi 
\lesssim 
\rho^n \left(h^2+e^{\frac{2\alpha_8}{h}} \|\Lambda_{A_1, q_1}^{\gamma_1, \gamma_2}-\Lambda_{A_2, q_2}^{\gamma_1, \gamma_2}\|^{2\mu_1}+
e^\frac{2\alpha_7}{h} \left|\log\|\Lambda_{A_1, q_1}^{\gamma_1, \gamma_2}-\Lambda_{A_2, q_2}\|\right|^{-2\mu_2}\right).
\end{equation}
Hence, by combining \eqref{eq:est_q_in_H-1_part_1} and \eqref{eq:est_q_in_H-1_part_2}, we have
\[
\|q\|_{H^{-1}(\mathbb{R}^n)}^2 
\lesssim 
\rho^nh^2+\rho^n e^{\frac{2\alpha_8}{h}} \|\Lambda_{A_1, q_1}^{\gamma_1, \gamma_2}-\Lambda_{A_2, q_2}^{\gamma_1, \gamma_2}\|^{2\mu_1}
+
\rho^ne^\frac{2\alpha_7}{h}\left|\log\|\Lambda_{A_1, q_1}^{\gamma_1, \gamma_2}-\Lambda_{A_2, q_2}\|\right|^{-2\mu_2} +\frac{1}{\rho^2}.
\]

Choosing $\rho>0$ such that $\rho^n h^2=\frac{1}{\rho^2}$, namely, $\rho=h^{-\frac{2}{n+2}}$, we obtain that
\begin{align*}
\|q\|_{H^{-1}(\mathbb{R}^n)}^2
&\lesssim  
h^\frac{4}{n+2}
+
h^{-\frac{2n}{n+2}}e^\frac{2\alpha_8}{h} \|\Lambda_{A_1, q_1}^{\gamma_1, \gamma_2}-\Lambda_{A_2, q_2}^{\gamma_1, \gamma_2}\|^{2\mu_1}
+ 
h^{-\frac{2n}{n+2}}e^\frac{2\alpha_7}{h}\left|\log\|\Lambda_{A_1, q_1}^{\gamma_1, \gamma_2}-\Lambda_{A_2, q_2}^{\gamma_1, \gamma_2}\|\right|^{-2\mu_2}
\\
&\lesssim  
h^\frac{4}{n+2}
+ 
h^{-\frac{2n}{n+2}}e^\frac{2\alpha_7}{h}\left|\log\|\Lambda_{A_1, q_1}^{\gamma_1, \gamma_2}-\Lambda_{A_2, q_2}^{\gamma_1, \gamma_2}\|\right|^{-2\mu_2}.
\end{align*}
We observe that there exists a constant $\alpha_9>0$ such that $h^{-\frac{2n}{n+2}}e^\frac{2\alpha_7}{h}\le e^\frac{2\alpha_9}{h}$. Hence, the inequality above reads
\[
\|q\|_{H^{-1}(\mathbb{R}^n)}^2
\lesssim 
h^\frac{4}{n+2}
+
e^\frac{2\alpha_9}{h}\left|\log\|\Lambda_{A_1, q_1}^{\gamma_1, \gamma_2}-\Lambda_{A_2, q_2}^{\gamma_1, \gamma_2}\|\right|^{-2\mu_2}.
\]
Thus, by similar arguments as in the end of \cite[Section 4]{Liu_stability}, we get that
\begin{equation}
\label{eq:est_q_in_H-1_part_3}
\|q\|_{H^{-1}(\Omega)} 
\lesssim 
h^\frac{2}{n+2}
+
e^\frac{\alpha_9}{h}\left|\log\|\Lambda_{A_1, q_1}^{\gamma_1, \gamma_2}-\Lambda_{A_2, q_2}^{\gamma_1, \gamma_2}\|\right|^{-\mu_2}.
\end{equation}

To complete the proof of Theorem \ref{thm:estimate_q}, taking $h$ sufficiently small, we argue similarly as in \cite[Section 3]{Liu_2024_biharmonic} again to conclude that there exists a constant $\delta>0$ such that $\|\Lambda_{A_1, q_1}^{\gamma_1, \gamma_2}-\Lambda_{A_2, q_2}^{\gamma_1, \gamma_2}\| <\delta$ implies $h\le h_0$. We then choose
\[
h = \frac{2\alpha_9}{\mu_2} \left|\log\left|\log\|\Lambda_{A_1, q_1}^{\gamma_1, \gamma_2}-\Lambda_{A_2, q_2}^{\gamma_1, \gamma_2}\|\right|\right|^{-1},
\]
and substitute it into \eqref{eq:est_q_in_H-1_part_3} to obtain the inequality
\[
\|q\|_{H^{-1}(\Omega)}
\lesssim 
\left|\log \left| \log\|\Lambda_{A_1, q_1}^{\gamma_1, \gamma_2}-\Lambda_{A_2, q_2}^{\gamma_1, \gamma_2}\|\right|\right|^{\frac{-2}{n+2}}
+
\left|\log\|\Lambda_{A_1, q_1}^{\gamma_1, \gamma_2}-\Lambda_{A_2, q_2}^{\gamma_1, \gamma_2}\|\right|^{-\frac{\mu_2}{2}}.
\]
Due to the inequality $x>\log x$ for $x\gg 1$, we have  
\[
\left|\log\|\Lambda_{A_1, q_1}^{\gamma_1, \gamma_2}-\Lambda_{A_2, q_2}^{\gamma_1, \gamma_2}\|\right|^{-\frac{\mu_2}{2}}
\le
\left|\log \left| \log\|\Lambda_{A_1, q_1}^{\gamma_1, \gamma_2}-\Lambda_{A_2, q_2}^{\gamma_1, \gamma_2}\|\right|\right|^{-\frac{\mu_2}{2}}.
\]
This gives us the estimate
\[
\|q\|_{H^{-1}(\Omega)}
\lesssim 
\left|\log \left|\log\|\Lambda_{A_1, q_1}^{\gamma_1, \gamma_2}-\Lambda_{A_2, q_2}^{\gamma_1, \gamma_2}\|\right|\right|^{-\mu'},
\]
where $\mu' = \min\left\{ \frac{2}{n+2}, \frac{\mu_2}{2}\right\}$.

On the other hand, when $\|\Lambda_{A_1, q_1}^{\gamma_1, \gamma_2}-\Lambda_{A_2, q_2}^{\gamma_1, \gamma_2}\| \geq \delta$, we utilize the continuous inclusions $L^\infty (\Omega) \hookrightarrow L^2(\Omega) \hookrightarrow H^{-1}(\Omega)$ to conclude that
\[
\|q\|_{H^{-1}(\Omega)}
\le 
\|q\|_{L^\infty(\Omega)} 
\le 
M
\lesssim  
\frac{M}{\delta^{\mu'}}\delta^{\mu'}
\lesssim  
\frac{M}{\delta^{\mu'}}\|\Lambda_{A_1, q_1}^{\gamma_1, \gamma_2}-\Lambda_{A_2, q_2}^{\gamma_1, \gamma_2}\|^{\mu'}.
\]
This completes the proof of Theorem \ref{thm:estimate_q}. 

\section{Proof of Theorem \ref{thm:estimate_q_no_A}}
\label{sec:proof_q_no_A}

In this section we assume that the first order term $A=0$ in the perturbed biharmonic operator \eqref{eq:def_biharmonic} and establish a stability estimate for the zeroth order perturbation $q$.
Let us point out that the term $\|A\|_{L^\infty(\Omega)}$ in the estimate \eqref{eq:integral_est_elec_potential_1}, which is of logarithmic type and is established in Theorem \ref{thm:estimate_A}, leads to a log-log-type  stability  for $q$ in Theorem \ref{thm:estimate_q}. In this section we show that the stability improves to one of logarithmic type  for the zeroth order term $q$ when $A=0$. We shall utilize the main ideas from Sections \ref{sec:proof_magnetic} and \ref{sec:proof_q}, and provide the proof for completeness. Similar to Section \ref{sec:proof_q}, let us write $q=q_2-q_1$ again and extend $q$ by zero to $\R^n$. We denote the extension by the same letter. 

Our starting point is the integral estimate \eqref{eq:int_est_q} with $A=0$, in which case we have
\begin{equation}
\label{eq:integral_est_elec_potential_1_A=0}
\left| \int_\Omega qu_2 \overline{u_1}dx\right| \lesssim e^{\frac{4R}{\tau}-\frac{\alpha_1}{3h}} + e^{\frac{6R}{\tau}+\frac{\alpha_2}{3h}} \|\Lambda_{0, q_1}^{\gamma_1, \gamma_2}-\Lambda_{0, q_2}^{\gamma_1, \gamma_2}\|^{\frac{1}{3}}, \quad \tau \to 0,
\end{equation}
where $u_1 \in H^4(\Omega)$ solves the equation $\mathcal{L}_{0,q_1}^*u_1=0$ in $\Omega$, and $u_2 \in H^4(\Omega)$ solves 
\[
\begin{cases}
	\mathcal{L}_{0,q_2}u_2 = 0 \quad \text{in} \quad \Omega, 
	\\
	u_2=f \quad \text{on} \quad \partial \Omega, 
	\\
	\Delta u_2=g \quad \text{on} \quad \partial \Omega.
\end{cases}
\]
By following the arguments from \cite{Choudhury_Krishnan}, the CGO solutions $u_1, u_2\in H^4(\Omega)$ are given by
\[
u_1(x,\zeta_1;\tau)= e^{\frac{ix\cdot \zeta_1}{\tau}}(1+r_1(x,\zeta_1;\tau))
\]
and 
\[
u_2(x,\zeta_2;\tau)= e^{\frac{ix\cdot \zeta_2}{\tau}}(1+r_2(x,\zeta_2;\tau)),
\]
respectively, where the remainder terms $r_1,r_2$ satisfy the estimate \eqref{eq:est_r_domain}. 

We next substitute the CGO solutions above into the left-hand side of \eqref{eq:integral_est_elec_potential_1_A=0} to see that
\[
\int_\Omega qu_2\overline{u_1}dx =  \mathcal{F}(q)(\xi)+I,
\]
where 
\[
I= \int_\Omega qe^{-ix\cdot \xi}(\overline{r_1}+r_2+r_2\overline{r_1})dx.
\]
By the estimate \eqref{eq:est_r_domain} and the Cauchy-Schwarz inequality,  we have
\begin{equation}
\label{eq:est_rem_q_A=0}
|I| \lesssim  \tau, \quad \tau \to 0.
\end{equation}
Thus, in view of estimates \eqref{eq:integral_est_elec_potential_1_A=0} and \eqref{eq:est_rem_q_A=0}, we get that
\begin{equation}
\label{eq:integral_est_elec_potential_3_A=0}
|\mathcal{F}(q)(\xi)| 
\lesssim 
\tau
+
e^{\frac{4R}{\tau}-\frac{\alpha_1}{3h}}
+
e^{\frac{6R}{\tau}+\frac{\alpha_2}{3h}} \|\Lambda_{0, q_1}^{\gamma_1, \gamma_2}-\Lambda_{0, q_2}^{\gamma_1, \gamma_2}\|^{\frac{1}{3}},
\quad  \tau \to 0.
\end{equation}
By setting $\tau = \lambda h$, where $\lambda >0$ is sufficiently large such that both inequalities in \eqref{eq:exponential_tau_h} hold. We deduce from \eqref{eq:integral_est_elec_potential_3_A=0} that
\begin{equation}
\label{eq:integral_est_elec_potential_4_A=0}
\begin{aligned}
|\mathcal{F}(q)(\xi)| 
&\lesssim 
h+
e^{-\frac{\alpha_3}{h}}
+e^\frac{\alpha_4}{h} \|\Lambda_{0, q_1}^{\gamma_1, \gamma_2}-\Lambda_{0, q_2}^{\gamma_1, \gamma_2}\|^{\frac{1}{3}}
\\
& \lesssim h
+e^\frac{\alpha_4}{h} \|\Lambda_{0, q_1}^{\gamma_1, \gamma_2}-\Lambda_{0, q_2}^{\gamma_1, \gamma_2}\|^{\frac{1}{3}}.
\end{aligned}
\end{equation}
Here the constants $\alpha_3,\alpha_4$ are the same as in \eqref{eq:exponential_tau_h}, and we have utilized the inequality $e^{-\frac{\alpha_3}{h}} \le h$ for $h>0$ small.

We now let $\rho >0$ be a parameter, which we shall specify  later. By Parseval's formula, we write
\[
\|q\|_{H^{-1}(\mathbb{R}^n)}^2 \le \int_{|\xi|\le \rho} \dfrac{|\mathcal{F}(q)(\xi)|^2}{1+|\xi|^2}d\xi + \int_{|\xi|\geq \rho} \dfrac{|\mathcal{F}(q)(\xi)|^2}{1+|\xi|^2}d\xi.
\]
We next estimate the two integrals on the right-hand side of the inequality above. For the first term, it follows from the estimate \eqref{eq:integral_est_elec_potential_4_A=0}, as well as the same computations as in the estimate \eqref{eq:est_dA_small_rho}, that
\begin{equation}
\label{eq:est_q_in_H-1_part_1_A=0}
\int_{|\xi|\le \rho} \frac{|\mathcal{F}(q)(\xi)|^2}{1+|\xi|^2}d\xi 
\lesssim 
\rho^n \left(h^2+ e^\frac{2\alpha_4}{h} \|\Lambda_{0, q_1}^{\gamma_1, \gamma_2}-\Lambda_{0, q_2}^{\gamma_1, \gamma_2}\|^{\frac{2}{3}} \right).
\end{equation}
On the other hand, we argue similarly as in the estimate \eqref{eq:est_dA_large_rho} to get the following estimate for the second term:
\begin{equation}
\label{eq:est_q_in_H-1_part_2_A=0}
\int_{|\xi|\geq \rho} \frac{|\mathcal{F}(q)(\xi)|^2}{1+|\xi|^2}d\xi 
\lesssim  
\frac{1}{\rho^2}.
\end{equation}
Hence, by combining \eqref{eq:est_q_in_H-1_part_1_A=0} and \eqref{eq:est_q_in_H-1_part_2_A=0}, we have
\[
\|q\|_{H^{-1}(\mathbb{R}^n)}^2 
\lesssim 
\rho^n h^2+ \rho^n e^\frac{2\alpha_4}{h} \|\Lambda_{0, q_1}^{\gamma_1, \gamma_2}-\Lambda_{0, q_2}^{\gamma_1, \gamma_2}\|^{\frac{2}{3}} +\frac{1}{\rho^2}.
\]

Let us now set $\rho=h^{-\frac{2}{n+2}}$, after which the inequality above becomes
\begin{align*}
\|q\|_{H^{-1}(\mathbb{R}^n)}^2
& \lesssim
h^\frac{4}{n+2} +h^\frac{-2n}{n+2}e^\frac{2\alpha_4}{h}\|\Lambda_{0, q_1}^{\gamma_1, \gamma_2}-\Lambda_{0, q_2}^{\gamma_1, \gamma_2}\|^{\frac{2}{3}}
\\
& \lesssim h^\frac{4}{n+2} +e^\frac{2\alpha_5}{h}\|\Lambda_{0, q_1}^{\gamma_1, \gamma_2}-\Lambda_{0, q_2}^{\gamma_1, \gamma_2}\|^{\frac{2}{3}},
\end{align*}
where the constant $\alpha_5>0$ is the same as in \eqref{eq:est_dA}. 
Thus, it follows from the arguments in \cite[Section 4]{Liu_stability} that
\begin{equation}
\label{eq:est_q_in_H-1_part_3_A=0}
\|q\|_{H^{-1}(\Omega)} 
\lesssim 
h^\frac{2}{n+2} +e^\frac{\alpha_5}{h}\|\Lambda_{0, q_1}^{\gamma_1, \gamma_2}-\Lambda_{0, q_2}^{\gamma_1, \gamma_2}\|^{\frac{1}{3}}.
\end{equation}

We next choose
\[
h= 3\alpha_5\left|\log\|\Lambda_{0, q_1}^{\gamma_1, \gamma_2}-\Lambda_{0, q_2}^{\gamma_1, \gamma_2}\|\right|^{-1},
\]
and substitute it into \eqref{eq:est_q_in_H-1_part_3_A=0} to obtain the estimate
\[
\|q\|_{H^{-1}(\Omega)} 
\lesssim
\|\Lambda_{0, q_1}^{\gamma_1, \gamma_2}-\Lambda_{0, q_2}^{\gamma_1, \gamma_2}\|^\frac{2}{3}
+
\left| \log\|\Lambda_{0, q_1}^{\gamma_1, \gamma_2}-\Lambda_{0, q_2}^{\gamma_1, \gamma_2}\| \right|^\frac{-2}{n+2}.  
\]

When $\|\Lambda_{0, q_1}^{\gamma_1, \gamma_2}-\Lambda_{0, q_2}^{\gamma_1, \gamma_2}\| \geq \delta$, due to the continuous inclusions of spaces $L^\infty (\Omega) \hookrightarrow L^2(\Omega) \hookrightarrow H^{-1}(\Omega)$, we conclude that
\[
\|q\|_{H^{-1}(\Omega)}
\le 
\|q\|_{L^\infty(\Omega)} 
\le 
M
\lesssim  
\frac{M}{\delta^{\frac{2}{3}}}\delta^{\frac{2}{3}}
\lesssim  
\frac{M}{\delta^{\frac{2}{3}}}\|\Lambda_{0, q_1}^{\gamma_1, \gamma_2}-\Lambda_{0, q_2}^{\gamma_1, \gamma_2}\|^{\frac{2}{3}}.
\]
The proof of Theorem \ref{thm:estimate_q_no_A} is now complete.

Finally, Corollaries \ref{cor:estimate_q_Linfty} and  \ref{cor:estimate_q_Linfty_no_A} follow immediately from Theorems \ref{thm:estimate_q} and  \ref{thm:estimate_q_no_A}, respectively, by performing similar computations as in the estimate \eqref{eq:est_dA_Linfty}, where we have utilized the Sobolev embedding theorem and the interpolation inequality.

\section*{Acknowledgments}
We would like to express our deepest gratitude to Katya Krupchyk for her valuable discussions and suggestions. The research of S.S. is partially supported by the National Science Foundation (DMS 2408793). We are also very grateful to the anonymous referees for their valuable feedback, which led to significant improvements of this paper. 

\bibliographystyle{abbrv}
\bibliography{bibliography_biharmonic}

\end{document}